\newcommand{\alg}{\operatorname{alg}}
\newcommand{\Qp}{\mathbf{Q}_p}
\newcommand{\Cp}{\mathbf{C}}
\newcommand{\Zp}{\mathbf{Z}_p}
\newcommand{\ZZ}{\mathbf{Z}}
\newcommand{\eps}{\varepsilon}
\newcommand{\Frac}{\operatorname{Frac}}
\newcommand{\wideg}{\operatorname{wideg}}
\newcommand{\card}{\operatorname{card}}
\newcommand{\val}{\operatorname{val}}
\newcommand{\OO}{\mathcal{O}}
\newcommand{\MM}{\mathfrak{m}}
\newcommand{\calR}{\mathcal{R}}
\newcommand{\calE}{\mathcal{E}}
\newcommand{\Tr}{\operatorname{Tr}}
\newcommand{\Nm}{\operatorname{N}}
\newcommand{\dfont}{\mathrm{D}}
\newcommand{\mfont}{\mathrm{M}}
\newcommand{\bigO}{\operatorname{O}}
\newcommand{\dcroc}[1]{[\![ #1 ]\!]}
\newcommand{\dpar}[1]{(\!( #1 )\!)}
\renewcommand{\geq}{\geqslant}
\renewcommand{\leq}{\leqslant} 
\renewcommand{\phi}{\varphi}
\theoremstyle{plain}
\newtheorem{theo}{Theorem}[section]
\newtheorem{coro}[theo]{Corollary}
\newtheorem{lemm}[theo]{Lemma}
\newtheorem{prop}[theo]{Proposition}
\newtheorem{conj}[theo]{Conjecture}
\newtheorem*{theoA}{Theorem A}
\newtheorem*{conjA}{Conjecture}
\newtheorem*{conjB}{Conjecture}
\newtheorem{rema}[theo]{Remark}
\begin{document}

\title{Substitution maps in the Robba ring}

\date{\today}

\author{Laurent Berger}
\address{Laurent Berger \\
UMPA de l'ENS de Lyon \\
UMR 5669 du CNRS}
\email{laurent.berger@ens-lyon.fr}
\urladdr{perso.ens-lyon.fr/laurent.berger/}

\begin{abstract}
We ask several questions about substitution maps in the Robba ring. These questions are motivated by $p$-adic Hodge theory and the theory of $p$-adic dynamical systems. We provide answers to those questions in special cases, thereby generalizing results of Kedlaya, Colmez, and others.\medskip

\noindent\textsc{R\'esum\'e.}
Nous posons plusieurs questions concernant les applications de substitution dans l'anneau de Robba. Ces questions sont motiv\'ees par la th\'eorie de Hodge $p$-adique et la th\'eorie des syst\`emes dynamiques $p$-adiques. Nous r\'epondons \`a ces questions dans des cas particuliers, ce qui g\'en\'eralise des r\'esultats de Kedlaya, Colmez, et d'autres.
\end{abstract}

\subjclass{11S82; 12H25; 13J07; 46S10}

\keywords{$p$-adic analysis; $p$-adic dynamical system; Robba ring; $\varphi$-module}

\dedicatory{To Bernadette Perrin-Riou, on the occasion of her 65th birthday.}

\maketitle

\setcounter{tocdepth}{2}
\tableofcontents

\setlength{\baselineskip}{18pt}
\section*{Introduction and notation}

Let $p$ be a prime number. In this article, $K$ is a finite extension of $\Qp$, or more generally a finite totally ramified extension of $W(k)[1/p]$ where $k$ is a perfect field of characteristic $p$. Let $\OO_K$ denote the integers of $K$, let $\MM_K$ be the maximal ideal of $\OO_K$, let $k$ be the residue field of $\OO_K$, and let $\pi$ be a uniformizer of $\OO_K$. We fix a $p$-adic norm $|\cdot|$ on $K$.

In $p$-adic Hodge theory, the theory of $p$-adic differential equations, and the theory of $p$-adic dynamical systems, several rings of power series with coefficients in $K$ occur. There is $\calE^+ = \OO_K \dcroc{X} [1/\pi]$, and various completed localizations of that ring, denoted by $\calE$ (Fontaine's field), $\calE^\dagger$ (the overconvergent elements in $\calE$), $\calR^+$ (the power series converging on the $p$-adic open unit disk), and $\calR$ (the Robba ring). These rings are often endowed with a substitution map $\phi$ of the form $\phi : f(X) \mapsto f(s(X))$, where $s(X)$ is either a Frobenius lift (for example $X^p$ or $(1+X)^p-1$ or $\pi X + X^q$ where $q$ is a power of $p$, in $p$-adic Hodge theory), or a more general power series (for example the multiplication-by-$p$ map in a formal group, in the theory of $p$-adic dynamical systems).

When considering certain questions in the above domains, it is necessary to compute $(\Frac \calR)^{\phi=\mu}$ for $\mu \in K$ and for certain $s(X)$. This happens for example when considering questions of descent of morphisms for certain $\phi$-modules, or when considering $p$-adic dynamical systems on annuli. The computation of $(\Frac \calR)^{\phi=\mu}$ for $\mu \in K$ is particularly delicate: that computation is carried out (for certain $s(X) \in X \cdot \OO_K \dcroc{X}$) in lemma 3.2.4 of \cite{K00} as well as in lemma 32 of \cite{MZ}, but there are mistakes in both proofs. Those mistakes are discussed in remark 5.8 of \cite{KFM} and fixed in \S 5 of that paper (see also the errata to ibid.). 

We compute $(\Frac \calR)^{\phi=\mu}$ for all substitutions $\phi$ that are of finite height, namely those for which $s(X)$ belongs to $X \cdot \OO_K \dcroc{X}$ and is such that $\overline{s}(X) \in k \dcroc{X}$ is nonzero and belongs to $X^2 \cdot k \dcroc{X}$. In particular, we do not assume that $s(X)$ is a Frobenius lift. 

If $s'(0) \neq 0$, there exists (see for instance \cite{L94}) an element $\log_s(X) \in X \cdot \calR^+$ such that $\phi(\log_s) = s'(0) \cdot \log_s$, so that $\log_s^k \in (\calR^+)^{\phi=s'(0)^k}$ if $k \geq 1$. The following theorem (theorem \ref{fracphi}) sums up our main results.

\begin{theoA}
\label{theoA}
If $\phi$ is of finite height, then $(\Frac \calR)^{\phi=1} = K$. In addition,
\begin{enumerate}
\item $(\Frac \calR)^{\phi=s'(0)^k} = K \cdot \log_s^k$ if $s'(0) \neq 0$ and $k \in \ZZ$;
\item $(\Frac \calR)^{\phi=\mu} = \{0\}$ if $\mu \neq 1 \in K$ and if either $s'(0)=0$ or if $s'(0) \neq 0$ and $\mu$ is not of the form $s'(0)^k$ for some $k \in \ZZ$.
\end{enumerate}
\end{theoA}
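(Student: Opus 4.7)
My plan is first to reduce to $\mu = 1$: when $s'(0) \neq 0$, $\log_s \in \calR^+$ is nonzero and hence invertible in $\Frac \calR$, so multiplication by $\log_s^{-k}$ gives a $K$-linear bijection $(\Frac \calR)^{\phi=s'(0)^k} \to (\Frac \calR)^{\phi=1}$, reducing (1) to the statement $(\Frac \calR)^{\phi=1} = K$. Both (2) and the decomposition $f = c \log_s^k$ will come out of the divisor analysis that follows.

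The key step is a classification of $s^*$-invariant divisors. For nonzero $f \in (\Frac \calR)^{\phi=\mu}$, the relation $\phi(f) = \mu f$ gives $s^*\div(f) = \div(f)$. Iterating the pointwise identity $m(b) = e_b \cdot m(s(b))$, together with the contraction $s^n(b) \to 0$ for every $b$ in the open disk and the isolated character of the support, forces the support of any $s^*$-invariant divisor to lie in the backward $s$-orbit of $0$. When $s'(0) \neq 0$, this orbit is exactly the zero locus of $\log_s$, and every $s^*$-invariant divisor has the form $k \cdot \div(\log_s)$ for some $k \in \ZZ$; when $s'(0) = 0$, the ramification $e_0 \geq 2$ forces the multiplicity at $0$, and hence everywhere on the orbit, to vanish.

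Given $\div(f) = k \cdot \div(\log_s)$, the quotient $g := f / \log_s^k \in \Frac \calR$ has trivial divisor, hence $g \in \calR^\times$. Substituting into $\phi(f) = \mu f$ gives $\phi(g) = (\mu / s'(0)^k) g =: \lambda g$. A Newton-polygon analysis at the boundary $|X| = 1^-$ shows that $\phi$ multiplies boundary slopes by $q$ and scales boundary norms by $|\lambda|$, forcing the boundary slope of $g$ to be $0$ and $|\lambda| = 1$; consequently $|g|_\rho \to |a_0|$ as $\rho \to 1^-$ for a well-defined dominant coefficient $a_0 \in K^\times$. Comparing leading behavior on both sides of $\phi(g) = \lambda g$ yields $\lambda = 1$, hence $\mu = s'(0)^k$, which proves (2). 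Applying the same boundary argument iteratively to $g - a_0 \in \calR$ (which is $\phi$-invariant with strictly smaller boundary norm) and using completeness of $K$ yields $g = a_0 \in K^\times$, so $f \in K \cdot \log_s^k$. The main obstacles I foresee are: the rigorous classification of $s^*$-invariant divisors (hinging on the finite-height hypothesis to ensure $0$ is the only attracting periodic orbit of $s$ in the open disk), and the structure theory of $\calR^\times$ and its Newton polygons at the boundary, needed to implement the boundary analysis and to extend Lazard-type factorization beyond $\calR^+$.
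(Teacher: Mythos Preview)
Your divisor argument has a fundamental domain-of-definition problem. An element $f \in \Frac\calR$ is, by definition, a quotient of elements of some $\calR^{[r;1[}$; its divisor is only defined on the annulus $A([r;1[)$, not on the whole open disk. The iteration ``$s^{\circ n}(b) \to 0$'' you invoke sends points \emph{out} of this annulus toward the origin, where $f$ is not defined and the relation $m(b) = e_b \cdot m(s(b))$ no longer makes sense. So your classification of $s^*$-invariant divisors, as written, applies only to $\Frac\calR^+$ --- and indeed that case is essentially Proposition~\ref{frpinvmu} in the paper --- not to $\Frac\calR$. The whole difficulty of the theorem is precisely to bridge this gap. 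The paper does so by constructing a left inverse $\psi$ of $\phi$ (\S\ref{secpsi}) that \emph{improves} the inner radius of convergence (Proposition~\ref{psiconv}), using $\psi$ together with growth estimates near $0$ (Lemma~\ref{pclem}, Proposition~\ref{bdrp}) to prove $\calR^{\phi=\mu}\subset\calR^+$ (Proposition~\ref{scalegv}), and then combining the factorization $\calR = \calR^+\cdot\calE^\dagger$ (Lemma~\ref{rpted}) with an inductive zero-removal argument (Proposition~\ref{fracrp}, Theorem~\ref{fracphimu}) to obtain $(\Frac\calR)^{\phi=\mu}\subset\Frac\calR^+$. None of this machinery appears in your proposal, and it is not among the obstacles you list.

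There is also a gap in your boundary analysis. For $g\in\calR^\times = (\calE^\dagger)^\times$ with $\phi(g)=\lambda g$, the norm relation $|g|_{r^d}=|\lambda|\,|g|_r$ gives only $|\lambda|=1$, and reduction modulo $\pi$ gives only $\lambda\equiv 1\pmod{\MM_K}$; neither yields $\lambda=1$. Your iterative step ``apply the same argument to $g-a_0$'' then fails, since $\phi(g-a_0)=\lambda(g-a_0)+(\lambda-1)a_0$ is not an eigenvector unless $\lambda=1$ is already known. The paper avoids this circularity by first forcing $g\in\calR^+$ and then reading off $\lambda = s'(0)^k$ from the order of vanishing at $X=0$ (Theorem~\ref{logegv}) --- an argument that is only available once one has access to the origin.
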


We propose a conjecture concerning $(\Frac \calR)^{\phi=1}$ in a more general setting. We say that the substitution $\phi$ is overconvergent if $s(X)$ is in the ring of integers of $\calE^\dagger$ and if $\overline{s}(X) \in k \dpar{X}$ is nonzero and belongs to $X^2 \cdot k \dcroc{X}$. The following is conjecture \ref{fraconj}.

\begin{conjA}
\label{conjA}
If $\phi$ is overconvergent, then $(\Frac \calR)^{\phi=1} = K$.
\end{conjA}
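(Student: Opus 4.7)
The plan follows the three-step outline behind the proof of Theorem~A: (i) reduce $f \in (\Frac\calR)^{\phi=1}$ to $f \in \calR^\times$; (ii) normalize via Gauss norms so that $f \in \OO_{\calE^\dagger}^\times$ with $|f|_r = 1$; (iii) conclude by a modulo-$\pi$ reduction followed by a $\pi$-adic successive-approximation argument.

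Steps (ii) and (iii) extend routinely. For (ii), write $n_0 = \operatorname{ord}_0(\overline{s}) \geq 2$: the composition estimate for Gauss norms gives $|\phi(f)|_r \leq |f|_{|s|_r} = |f|_{r^{n_0}}$ for $r$ close to $1$, and applying this to both $f$ and $f^{-1} \in \calR^\times$ yields $|f|_r = |f|_{r^{n_0}}$; since $\log|f|_r$ is affine in $\log r$ with integer slope $d$ for $f \in \calR^\times$, this forces $d(n_0-1)=0$, hence $d=0$, and $|f|_r$ is constant in $r$. The constancy in turn forces the constant coefficient $a_0 \in K^\times$ of $f$ to be dominant, so after dividing by $a_0$ we may assume $f \in \OO_{\calE^\dagger}^\times$ with $|f|_r=1$ and constant term $1$. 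For (iii), the essential input is the identity $k\dpar{X}^{\overline{\phi}=1} = k$: for $\overline{f} \in k\dpar{X}$ with lowest Laurent degree $m_0$, the series $\overline{f}(\overline{s})$ has lowest degree $m_0 n_0$, so $\overline{\phi}(\overline{f}) = \overline{f}$ forces $m_0=0$, and an induction on positive degrees then pins $\overline{f}$ to a constant in $k$. Applied to the normalized $f$, this yields $\overline{f} = 1$, so $f = 1 + \pi f_1$ with $f_1 \in \OO_{\calE^\dagger}^{\phi=1}$ of zero constant term; the same computation applied to $f_1$ gives $\overline{f_1}=0$, so $f_1 \in \pi \OO_{\calE^\dagger}^{\phi=1}$, and iterating together with $\pi$-adic separatedness of $\OO_{\calE^\dagger}$ forces $f_1=0$ and $f = a_0 \in K$.

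The main obstacle is step (i). The natural preimage-tree argument produces, from any zero $y_0$ of $f$ in the annulus, infinitely many zeros of the numerator $g$ at radii $|y_0|^{1/n_0^k}$ accumulating at $|X|=1$, with total mass growing like $n_0^k$ at level $k$. In the finite-height setting this yields a contradiction, because $s \in X \OO_K \dcroc{X}$ extends $\phi$ to $\calR^+$ and forces $f$ to descend to the full open disk, on which finiteness of zeros in each closed sub-disk is automatic. In the overconvergent setting this leverage is lost: a general element of $\calR$ is allowed to have zeros accumulating at the outer boundary, so the tree alone does not contradict $g \in \calR$. A possible route to rescue the argument is to exploit the algebraic structure of the tree, namely that the level-$k$ zeros of $g$ are exactly the roots of the Weierstrass polynomial $P_k(X)$ of $s^k(X) - y_0$ (of degree $n_0^k$), forcing $P_k^m \mid g$ in $\calR$ for every $k \geq 1$; this infinite tower of divisibilities should be ruled out by an asymptotic estimate on the growth of $P_k$. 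Alternatively, one could attempt to invoke Kedlaya's slope filtration to constrain the slope-zero sub-$\phi$-modules generated by $f$ inside a suitably chosen ambient $\phi$-module over $\calR$. This is the step that requires genuinely new input beyond the finite-height arguments.
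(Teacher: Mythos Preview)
The statement you are attempting to prove is Conjecture~\ref{fraconj} in the paper, and it is \emph{not proved there}: it is stated as an open problem. The paper establishes it only under the additional hypothesis that $s(X)\in X\cdot\OO_K\dcroc{X}$ (finite height), via Theorem~\ref{fracphi}, and shows stability under finite extensions of Robba rings (Corollary~\ref{etalext}). So there is no ``paper's own proof'' to compare against in the general overconvergent case.

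Your proposal is honest about this: you explicitly identify step~(i) as incomplete and say it ``requires genuinely new input beyond the finite-height arguments.'' That assessment is correct, and in fact step~(i) is the entire content of the conjecture. Your steps~(ii) and~(iii), taken together, amount to a proof that $(\calR^\times)^{\phi=1}=K$; the paper already proves the slightly stronger $\calR^{\phi=1}=K$ in the overconvergent setting (Proposition~\ref{robinvar}), using the same Gauss-norm boundedness you use in~(ii) but a different endgame (evaluation at a point together with algebraic closedness of $K$ in $\calE^\dagger$, rather than your mod-$\pi$ successive approximation). Your argument for~(ii)--(iii) is fine, but it does not go beyond what is already known. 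The paper also reformulates the missing step as the statement that $\dim_K\calR^{\phi=\lambda}\leq 1$ for every eigenvalue $\lambda$ (Proposition~\ref{conjequiv}), and gives partial results in this direction for $p$-power lifts (Propositions~\ref{pizdim} and~\ref{dimuniv}).

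Your two suggested routes for step~(i) are not developed enough to assess. The ``infinite tower of divisibilities'' idea is not obviously obstructed: in a Bezout domain like $\calR^{[r;1[}$ an element can genuinely be divisible by an infinite family of pairwise coprime polynomials whose zeros accumulate at the boundary, so one would need a precise growth estimate on the $P_k$ relative to the growth of $|g|_r$ as $r\to 1$, and you have not indicated what that estimate would be or why it holds. The slope-filtration suggestion is even vaguer. As it stands, the proposal does not close the gap, and the conjecture remains open exactly where you say it does.
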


Theorem A implies this conjecture when $\phi$ is of finite height. We also prove (corollary \ref{etalext}) that if $\calR'/\calR$ is a finite extension of Robba rings, and if conjecture \ref{fraconj} holds for $\calR$, then it holds for $\calR'$.

Finally, we propose a conjecture about those $h \in \calR$ such that $\phi(h)$ has a large annulus of convergence, when $\phi$ is of finite height. Let $\rho(s)$ be the largest norm of a zero of $s$ in the open unit disk. The following is conjecture \ref{foreg}, and we prove it in the cyclotomic case, namely when $s(X)=(1+X)^p-1$ (see proposition \ref{foregcyc}).

\begin{conjB}
\label{conjB}
If $\phi$ is of finite height and $h \in \calR$ is such that $\phi(h)$ is convergent on the annulus $\{ z,\, \rho(s) \leq |z| < 1\}$, then $h \in \calR^+$.
\end{conjB}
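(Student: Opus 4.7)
The plan is to decompose $h = h^+ + h^-$ with $h^+ \in \calR^+$ and $h^- = \sum_{n \leq -1} a_n X^n$ the purely-negative Laurent part. Since $\phi(h^+)$ lies in $\calR^+$ and hence converges on the whole open unit disk, the hypothesis on $\phi(h)$ transfers to $\phi(h^-)$, reducing the problem to showing that $h^- = 0$. The strategy is to establish that $h^-$ admits a rigid analytic extension to the full open unit disk, which forces it to vanish by uniqueness of the Laurent expansion at $0$.

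The key local ingredient is a residue disk analysis around the point $X_0 = \zeta_p - 1$, which has $|X_0| = \rho(s) = p^{-1/(p-1)}$. Using $\zeta_p^p = 1$ one has
\[
s(X_0 + Y) = (\zeta_p + Y)^p - 1 = (1 + Y/\zeta_p)^p - 1 = s(Y/\zeta_p),
\]
and since $|s(Y)| = |Y|/p$ for $|Y| < \rho(s)$, the map $s$ restricts to a rigid analytic bijection from the residue disk $\{|X - X_0| < \rho(s)\}$ (which by ultrametricity is contained entirely in the sphere $|X| = \rho(s)$) onto the disk $\{|w| < \rho(s)^p\}$.

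Next, I use the hypothesis that $\phi(h^-) = \sum c_n X^n$ satisfies $|c_n| \rho(s)^n \to 0$ to reorganize the Laurent series at $0$ into a Taylor series in $Y = X - X_0$ on the residue disk, via the binomial expansion of $(X_0 + Y)^n$. The estimate $|\binom{n}{k}| \leq 1$ for $n \in \ZZ$ and $k \geq 0$ suffices to show the resulting series converges for $|Y| < \rho(s)$, so $\phi(h^-)$ is rigid analytic on this disk. Composing with the local inverse of $s$ produces a rigid analytic function $\tilde h(w) := \phi(h^-)(s^{-1}(w))$ on $\{|w| < \rho(s)^p\}$. The formal identity $\phi(h^-) = h^- \circ s$ in $\calR$, together with the Gauss-norm relation $|\phi(h^-)|_{\rho(s)} = |h^-|_{\rho(s)^p}$ that forces convergence of $h^-$ down to the sphere $|w| = \rho(s)^p$, ensures $\tilde h$ agrees with $h^-$ wherever both are defined.

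Finally, gluing $\tilde h$ on the inner disk with $h^-$ on the outer annulus $\{\rho(s)^p \leq |w| < 1\}$ should yield a rigid analytic function $H$ on $\{|w| < 1\}$. Expanding $H$ as a Taylor series at $w = 0$ and comparing with the purely negative Laurent expansion of $h^-$ on the outer annulus, uniqueness of the Laurent development forces $H = h^- = 0$, so $h = h^+ \in \calR^+$. The hardest step is this last rigid analytic gluing across the sphere $|w| = \rho(s)^p$, particularly when $h^-$'s convergence region meets the inner disk only at that sphere; it relies on the simple zero of $s$ at $X_0$ and on the $\mu_p$-symmetry $s(\zeta(1+X) - 1) = s(X)$ implicit in the identity above, both of which are special to the cyclotomic case and explain why the general finite-height situation is left as a conjecture.
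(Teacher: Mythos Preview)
Your geometric approach via residue disks is different from the paper's, and the gluing step you flag as ``hardest'' is a genuine gap, not a detail. After establishing $h^-\in\calR^{[\rho(s)^p;1[}$ (your Gauss-norm justification is circular as stated, since the identity $|\phi(h^-)|_{\rho(s)}=|h^-|_{\rho(s)^p}$ presupposes convergence of $h^-$ at that radius; this is fixable via $h^-=\tfrac1p\,\psi(\phi(h^-))$ and $\psi(\calR^{[\rho(s);1[})\subset\calR^{[\rho(s)^p;1[}$), you have $\tilde h$ on the \emph{open} disk $\{|w|<\rho(s)^p\}$ and $h^-$ on the annulus $\{\rho(s)^p\le|w|<1\}$. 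These share only a sphere, which is not an admissible overlap for rigid gluing. The local inverse $s^{-1}$ near $X_0$ does not extend to $|w|=\rho(s)^p$ (the $p$ sheets of $s$ coalesce there), and boundedness of $\tilde h$ on the open disk yields only $\sup_n|b_n|\rho(s)^{pn}<\infty$, not $|b_n|\rho(s)^{pn}\to 0$, so you cannot push $\tilde h$ onto the closed disk either. Invoking the simple zero of $s$ at $X_0$ and the $\mu_p$-symmetry does not by itself manufacture an annulus of agreement.

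The paper sidesteps this entirely by working with $g=\phi(h)$ rather than with $h$, and by using the \emph{global} identity $g((1+X)\eps-1)=g(X)$ for $\eps$ a primitive $p$-th root of $1$ instead of a local inverse of $s$. Expanding the left side explicitly and comparing negative Laurent coefficients shows that the sequence $x_k=g_{k+1}(\eps-1)^{k+1}$ and its binomial transform $(Tx)_\ell=\sum_{k=0}^{\ell}(-1)^k\binom{\ell}{k}x_k$ both tend to $0$; a short Mahler-series lemma (if $x\to 0$ and $Tx\to 0$ then $x=0$, proved by interpolating $\ell\mapsto(Tx)_\ell$ to a continuous function on $\Zp$) then forces $x=0$. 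Hence the negative part of $\phi(h)$ vanishes, and $h=\tfrac1p\,\psi(\phi(h))\in\calR^+$. This is an Amice--Fresnel style argument on coefficients, with no analytic continuation across a sphere.
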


The definitions and properties of the rings that occur in this article are given in \S \ref{seclaz}. Overconvergent substitutions are introduced in \S \ref{secsubrob}, and conjecture \ref{fraconj} is discussed in \S \ref{secegvfr}. After that, we assume that $\phi$ is of finite height; these substitutions are discussed in \S \ref{subfinht}. A generalization of the classical operator $\psi$ is constructed in \S \ref{secpsi}. Theorem A is proved in \S \ref{secmainth}. The stability of conjecture \ref{fraconj} under finite extensions is proved in \S\ref{secphimod}, and conjecture \ref{foreg} is discussed in \S \ref{secforced}.

\section{Rings of power series}
\label{seclaz}

We start by defining the rings that occur in this article. There is $\OO_K \dcroc{X}$, the ring $\calE^+ = \OO_K \dcroc{X} [1/\pi]$, the ring $\OO_{\calE}$ of power series $\sum_{n \in \ZZ} a_n X^n$ with $a_n \in \OO_K$ and $a_{-n} \to 0$ as $n \to + \infty$, and the field $\calE=\OO_{\calE}[1/\pi]$. If $I$ is a subinterval of $[0;1[$, we have the ring $\calR^I$ of power series $a(X) = \sum_{n \in \ZZ} a_n X^n$ with $a_n \in K$ such that $|a_n| \cdot r^n \to 0$ as $n \to \pm \infty$ for all $r \in I$. We let $\calR^+ = \calR^{[0;1[}$ be the ring of power series $a(X) = \sum_{n \geq 0} a_n X^n$ with $a_n \in K$ such that $|a_n| \cdot r^n \to 0$ as $n \to +\infty$ for all $0 \leq r < 1$. Inside $\calR^{[r;1[}$ we have the subring $\calE^{[r;1[}$ of power series with bounded coefficients. The ring $\calE^{[r;1[}$ is contained in $\calE$. Finally, we have the field $\calE^\dagger = \cup_{r < 1} \calE^{[r;1[}$ of overconvergent elements of $\calE$, and the Robba ring $\calR = \cup_{r < 1} \calR^{[r;1[}$. We have $\calE^\dagger  \subset \calE$ and $\calE^\dagger \subset \calR$, and $\calR = \calR^+ + \calE^\dagger$ while $\calR^+ \cap \calE^\dagger = \calE^+$.

The rings $\calR^I$ are studied in \cite{L62}. We recall below the results that we use in this article; the proofs can be found in \cite{L62}. If $I = [r;s]$ is a closed interval, then $\calR^I$ is a PID. For $r<1$, the ring $\calR^{[r;1[}$ is  a Bezout domain, and $\calE^{[r;1[}$ is a PID. In particular, it makes sense to talk about the gcd of two elements of these rings, and to say that two elements are coprime. 

Let $\Cp$ be the completion of an algebraic closure $K^{\alg}$ of $K$, so that $\MM_{\Cp}$ is the $p$-adic open unit disk.  If $I$ is a subinterval of $[0;1[$, let $A(I)$ denote the annulus $A(I) = \{ z \in \Cp$ such that $|z| \in I \}$. Special cases include $D=A([0;1[)$, the open unit disk, $D(r) = A([0;r])$, the closed disk of radius $r$, and $C(r) = A( [r;r] )$, the circle of radius $r$. An element of $\calR^I$ defines a function on $A(I)$, which can have zeroes.

\begin{lemm}
\label{invnozer}
If $h \in \calR^I$, then $h$ is invertible if and only if $h$ has no zeroes in $A(I)$.
\end{lemm}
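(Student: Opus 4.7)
The ``only if'' direction is immediate: an equality $hg = 1$ in $\calR^I$ specialises at each $z \in A(I)$ to $h(z)g(z) = 1$, so $h$ cannot vanish on $A(I)$.

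For the converse, my plan is to reduce to the case where $I$ is closed and then appeal to Lazard's structure theory from \cite{L62}. The reduction rests on the identity $\calR^I = \bigcap_J \calR^J$, with $J$ running over closed subintervals of $I$; this is immediate from the definition of $\calR^I$, since a Laurent series converges on $A(I)$ exactly when it converges on every $A(J)$. Because each $\calR^J$ is an integral domain, inverses in $\calR^J$ are unique, so if the lemma is known for closed $J$ then the family of inverses $(h_J^{-1})_J$ is automatically compatible and yields an element of $\bigcap_J \calR^J = \calR^I$ inverting $h$.

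It then remains to treat $I = [r;s]$ closed. The excerpt has recalled that $\calR^{[r;s]}$ is a PID. I would invoke Lazard's factorisation result from \cite{L62}: every nonzero $h \in \calR^{[r;s]}$ decomposes, up to a unit, as an integer power of $X$ times a product of ``elementary'' irreducible factors, each one contributing a single zero of $h$ on $A(I)$ counted with multiplicity. Under the zero-free hypothesis none of these elementary factors appears, and if $r = 0$ the power of $X$ must vanish too (else one would have $h(0) = 0$); hence $h$ itself is a unit of $\calR^{[r;s]}$. The single genuine input is the precise form of this Weierstrass-type factorisation in \cite{L62} relating zeros of an element to its prime decomposition, and that is really the only step of the argument that is not formal.
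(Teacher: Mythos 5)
Your argument is correct and matches the paper's intent: Lemma \ref{invnozer} is one of the facts the paper lists without proof, explicitly deferring to Lazard \cite{L62}, and your reduction to closed subintervals (via $\calR^I = \bigcap_J \calR^J$ and uniqueness of inverses in a domain) followed by Lazard's Weierstrass-type factorisation of elements of the PID $\calR^{[r;s]}$ is precisely the standard unpacking of that citation. No gap.
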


If $r \in I$, we have the norm $| \cdot |_r$ on $\calR^I$ given by $|a|_r = \sup_{n \in \ZZ} |a_n| \cdot r^n $. If $r \in |\Cp|$, then $|a|_r = \sup_{z \in \Cp,\, |z|=r} |a(z)|$. The norm $| \cdot |_r$ is multiplicative: $|ab|_r = |a|_r \cdot |b|_r$. The function $s \mapsto \log |h|_s$ is a log-convex function on $I$.

\begin{lemm}
\label{frechrp}
The family of norms $\{ |\cdot|_s\}_{r \leq s <1}$ defines a Fr\'echet structure on $\calE^{[r;1[}$, and the Fr\'echet completion of $\calE^{[r;1[}$  is $\calR^{[r;1[}$.
\end{lemm}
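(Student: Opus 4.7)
My plan is to first cut the norm family down to a countable cofinal one so that the topology on $\calE^{[r;1[}$ is metrizable, then prove that $\calR^{[r;1[}$ is complete for this topology, and finally show that $\calE^{[r;1[}$ is dense in it.

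For metrizability, I would pick an increasing sequence $r = s_0 < s_1 < s_2 < \cdots$ in $[r;1[$ with $s_n \to 1$. The convexity property stated just above the lemma implies that for $s \in [s_n;s_{n+1}]$, the norm $|\cdot|_s$ is bounded by $\max(|\cdot|_{s_n}, |\cdot|_{s_{n+1}})$, so the topology defined by the full family $\{|\cdot|_s\}_{r \leq s < 1}$ agrees with the one given by the countable subfamily $\{|\cdot|_{s_n}\}_{n \geq 0}$; the latter is metrizable, for instance by $d(f,g) = \sum_n 2^{-n} \min(1, |f-g|_{s_n})$.

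For completeness, I would identify $\calR^{[r;1[}$ with the inverse limit $\varprojlim_n \calR^{[r;s_n]}$, each factor being a Banach space under $\max(|\cdot|_r, |\cdot|_{s_n})$ (for this Banach completeness of $\calR^{[a;b]}$ on a closed annulus I would appeal to \cite{L62}; concretely, a Cauchy sequence has coefficient-wise Cauchy Laurent coefficients in $K$, whose limits reassemble into an element of $\calR^{[r;s_n]}$). An inverse limit of Banach spaces along continuous transition maps is Fr\'echet complete, and the inverse-limit topology visibly agrees with the one defined by the countable norm family above. This equips $\calR^{[r;1[}$ with a Fr\'echet structure containing $\calE^{[r;1[}$ as a topological subring.

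For density, given $h = \sum_m a_m X^m \in \calR^{[r;1[}$, I would split $h = h^- + h^+$ according to $m < 0$ or $m \geq 0$. The condition $|a_m| r^m \to 0$ as $m \to -\infty$, together with $r^m \to +\infty$, already forces $|a_m| \to 0$ in that direction, so $h^-$ has bounded coefficients and lies in $\calE^{[r;1[}$. The positive part $h^+$ may have unbounded coefficients, but I would approximate it by its polynomial truncations $h^+_N = \sum_{m=0}^N a_m X^m$, observing that for each fixed $s < 1$, $|h^+ - h^+_N|_s = \sup_{m>N} |a_m| s^m \to 0$ as $N \to \infty$ since $|a_m| s^m \to 0$. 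Then $h^- + h^+_N$ belongs to $\calE^{[r;1[}$ and converges to $h$ in the Fr\'echet topology. The only truly nontrivial ingredient is the Banach completeness of the closed-annulus rings, which I would simply quote from Lazard; the rest is short, the key observation being that the growth condition at $-\infty$ automatically forces boundedness of the negative tail.
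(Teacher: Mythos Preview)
The paper does not give its own proof of this lemma: it is one of several foundational facts about the rings $\calR^I$ that are simply quoted from Lazard \cite{L62} (see the sentence preceding Lemma~\ref{invnozer}). So there is no ``paper's own proof'' to compare against.

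Your argument is correct. The reduction to a countable cofinal family via log-convexity is the standard way to get metrizability; your identification of $\calR^{[r;1[}$ with the projective limit $\varprojlim_n \calR^{[r;s_n]}$ is the right way to see completeness, and you correctly isolate the Banach completeness of $\calR^{[r;s]}$ as the one genuine analytic input (which is indeed in \cite{L62}). The density step is also fine: your observation that $|a_m| \leq |a_m|\,r^m$ for $m<0$ (since $r<1$) forces the negative tail to lie in $\calE^{[r;1[}$ outright, so only $h^+$ needs approximating, and polynomial truncation does that normwise.
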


\begin{lemm}
\label{bdzer}
If $h \in \calR^{[r;1[}$, the following are equivalent:
\begin{enumerate}
\item $h \in \calE^{[r;1[}$;
\item $h$ has finitely many zeroes in $A([r;1[)$;
\item the function $s \mapsto |h|_s$ is bounded as $s \to 1$.
\end{enumerate}
In particular, if $h \in \calR$ and $h \notin \calE^\dagger$, then $s \mapsto |h|_s$ is eventually increasing as $s \to 1$.
\end{lemm}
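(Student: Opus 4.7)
The plan is to prove the implications cyclically: $(1)\Leftrightarrow(3)$ follows directly from the definition of $|h|_s$, $(2)\Rightarrow(1)$ exploits multiplicativity and log-convexity of $|\cdot|_s$ together with Lemma~\ref{invnozer}, and $(1)\Rightarrow(2)$ uses the PID (hence UFD) structure of $\calE^{[r;1[}$ recalled in the excerpt. The equivalence $(1)\Leftrightarrow(3)$ is elementary: if $|a_n|\leq C$ for all $n$, then $|a_n|s^n\leq C$ when $n\geq 0$, and for $n<0$ with $s\geq r$ the inequality $(s/r)^n\leq 1$ gives $|a_n|s^n\leq |a_n|r^n$, which is bounded since $|a_n|r^n\to 0$; conversely, if $|h|_s\leq M$ for $s$ close to $1$, then $|a_n|s^n\leq M$, and letting $s\to 1$ yields $|a_n|\leq M$.

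For $(2)\Rightarrow(1)$, let $z_1,\dots,z_d$ be the zeros of $h$ in $A([r;1[)$ with multiplicity and set $Q(X)=\prod_i(X-z_i)$; then $u=h/Q$ lies in $\calR^{[r;1[}$ (using the Bezout property) and has no zero in $A([r;1[)$, so is a unit by Lemma~\ref{invnozer}. Multiplicativity gives $|u|_s\cdot|u^{-1}|_s=1$, so $\log|u|_s=-\log|u^{-1}|_s$; both sides are convex functions of $\log s$ by the log-convexity property stated in the excerpt, hence $\log|u|_s$ is both convex and concave in $\log s$, and therefore affine: $\log|u|_s=A+B\log s$ for constants $A,B$. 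In particular $|u|_s$ is bounded as $s\to 1$, so $u\in\calE^{[r;1[}$ by $(3)\Rightarrow(1)$, and $h=Qu\in\calE^{[r;1[}$. I expect this step to be the main obstacle; the crucial observation is the ``convex plus concave equals affine'' trick applied to units via the multiplicativity of $|\cdot|_s$.

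For $(1)\Rightarrow(2)$, for each zero $z\in A([r;1[)$ of $h$, the quotient satisfies $|h/(X-z)|_s=|h|_s/\max(s,|z|)$, which is bounded as $s\to 1$ since $|h|_s$ is; so $h/(X-z)\in\calE^{[r;1[}$ by $(3)\Rightarrow(1)$, i.e., $(X-z)\mid h$ in $\calE^{[r;1[}$. Each $(X-z)$ is prime in $\calE^{[r;1[}$ because $(X-z)\mid fg$ is equivalent to $f(z)=0$ or $g(z)=0$, and distinct zeros give pairwise non-associate primes. Since $\calE^{[r;1[}$ is a UFD, any nonzero element is divisible by only finitely many pairwise non-associate primes and only to finite multiplicity, so $h$ has finitely many zeros in $A([r;1[)$.

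Finally, for the ``in particular'' statement, if $h\in\calR\setminus\calE^\dagger$ then for any $r<1$ with $h\in\calR^{[r;1[}$ one has $h\notin\calE^{[r;1[}$, so $(1)\Leftrightarrow(3)$ gives that $|h|_s$ is unbounded as $s\to 1$. The convex function $t\mapsto\log|h|_{e^t}$ on $[\log r,0[$ has a non-decreasing left derivative; if this derivative stayed $\leq 0$, the function would be non-increasing and hence bounded above, contradicting unboundedness. So the derivative is eventually strictly positive and $|h|_s$ is eventually strictly increasing.
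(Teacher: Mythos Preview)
The paper does not actually prove this lemma; it is one of the results recalled from Lazard \cite{L62} (``the proofs can be found in \cite{L62}''). So there is no ``paper's own proof'' to compare against, and your proposal stands on its own.

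Your argument is essentially correct, but there is one point you glossed over that deserves a sentence. The rings $\calR^{[r;1[}$ and $\calE^{[r;1[}$ are defined with coefficients in $K$, whereas a zero $z$ of $h$ lies in $\Cp$ and need not be algebraic over $K$. Thus the polynomial $Q(X)=\prod_i(X-z_i)$ and the linear factors $X-z$ generally do \emph{not} belong to $\calR^{[r;1[}$ or $\calE^{[r;1[}$, and your divisibility and primality statements (``$(X-z)\mid h$ in $\calE^{[r;1[}$'', ``$(X-z)$ is prime in $\calE^{[r;1[}$'') are not literally well-posed. The fix is painless: all three conditions $(1)$, $(2)$, $(3)$ are visibly invariant under extension of scalars from $K$ to $\Cp$ (boundedness of coefficients, the zero set, and the norms $|\cdot|_s$ do not change), and Lazard's structural results hold equally for the $\Cp$-coefficient rings. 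So you may run your entire argument in $\calR^{[r;1[}_{\Cp}$ and $\calE^{[r;1[}_{\Cp}$ and conclude over $K$ at the end; you should say so explicitly.

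For $(2)\Rightarrow(1)$ there is an even cleaner route that sidesteps this issue entirely: if $h$ has only finitely many zeroes in $A([r;1[)$, they all lie in $A([r;\rho])$ for some $\rho<1$, so $h$ is a unit in $\calR^{[\rho';1[}$ for any $\rho<\rho'<1$ by Lemma~\ref{invnozer}, and your ``convex $+$ concave $=$ affine'' trick applied directly to $h$ on $[\rho';1[$ shows $s\mapsto|h|_s$ is bounded as $s\to 1$. This avoids introducing $Q$ at all. The rest of your proof (the elementary $(1)\Leftrightarrow(3)$, the UFD argument for $(1)\Rightarrow(2)$ once coefficients are in $\Cp$, and the convexity argument for the final ``in particular'') is fine.
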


\begin{coro}
\label{invrp}
We have $(\calR^+)^\times = (\calE^+)^\times$.
\end{coro}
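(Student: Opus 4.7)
The inclusion $(\calE^+)^\times \subseteq (\calR^+)^\times$ is immediate from the inclusion $\calE^+ \subset \calR^+$, so the content of the corollary is the reverse inclusion. My plan is to show that if $h \in \calR^+$ is invertible with inverse $g \in \calR^+$, then both $h$ and $g$ have bounded coefficients, so both lie in $\calE^+$.

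The starting observation is that, by lemma \ref{invnozer} applied to $I=[0;1[$, an invertible element $h \in \calR^+$ has no zeroes on the open unit disk $D = A([0;1[)$, and the same holds for its inverse $g$. Fixing any $r \in (0;1)$, we can view $h$ and $g$ as elements of $\calR^{[r;1[}$; since neither has zeroes on $A([r;1[)$, in particular each has only finitely many zeroes there, so lemma \ref{bdzer} (equivalence of (i) and (ii)) gives $h, g \in \calE^{[r;1[}$.

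By the third equivalent condition in the same lemma, the functions $s \mapsto |h|_s$ and $s \mapsto |g|_s$ are bounded as $s \to 1$, say both bounded by some constant $C$. Writing $h = \sum_{n \geq 0} a_n X^n$, for each $n$ and each $s$ close to $1$ we have $|a_n| \cdot s^n \leq |h|_s \leq C$, and letting $s \to 1$ gives $|a_n| \leq C$; hence $h$ has bounded coefficients. Choosing $N$ with $|a_n| \leq |\pi|^{-N}$ for all $n$ yields $\pi^N h \in \OO_K\dcroc{X}$, so $h \in \calE^+$. The same argument applied to $g$ gives $g \in \calE^+$, whence $h \in (\calE^+)^\times$.

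There is no serious obstacle here: the corollary is essentially a packaging of lemma \ref{bdzer}, and the only point to be careful about is that one must apply that lemma to both $h$ and its inverse, since being in $\calE^+$ is a statement about coefficient boundedness that needs to be checked on each factor separately.
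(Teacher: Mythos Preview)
Your proof is correct and follows the same route as the paper, which simply records that the corollary follows from lemmas~\ref{invnozer} and~\ref{bdzer}. One small streamlining: once you know $h \in \calR^+ \cap \calE^{[r;1[} \subset \calR^+ \cap \calE^\dagger$, the paper's stated identity $\calR^+ \cap \calE^\dagger = \calE^+$ already gives $h \in \calE^+$, so the explicit coefficient-bounding step is not needed.
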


\begin{proof}
This follows from lemmas \ref{invnozer} and \ref{bdzer}.
\end{proof}

\begin{lemm}
\label{merholom}
If $g/h \in \Frac \calR^+$ has no poles, then $g/h \in \calR^+$.
\end{lemm}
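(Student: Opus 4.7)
The plan is to show that $g/h \in \calR^{[0;r]}$ for every $r \in [0;1[$, and then conclude using the identification $\calR^+ = \bigcap_{0 \leq r < 1} \calR^{[0;r]}$, which is immediate from the definitions recalled at the start of this section.

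Fix $r \in [0;1[$ and view $g, h$ as elements of $\calR^{[0;r]}$. By Lazard's theorem (recalled just before Lemma \ref{invnozer}), $\calR^{[0;r]}$ is a PID, so one can write $g = d g_1$ and $h = d h_1$ in $\calR^{[0;r]}$ with $\gcd(g_1,h_1) = 1$, so that $g/h = g_1/h_1$ in $\Frac \calR^{[0;r]}$. The heart of the proof is to check that $h_1$ has no zeros in $A([0;r]) = D(r)$: once this is established, Lemma \ref{invnozer} gives that $h_1$ is a unit of $\calR^{[0;r]}$, and hence $g/h = g_1 h_1^{-1} \in \calR^{[0;r]}$.

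To carry out that main task, I would use the fact that in the PID $\calR^{[0;r]}$ each point $z_0 \in D(r)$ is cut out by a specific prime element $\pi_{z_0}$ (attached to the Galois orbit of $z_0$ over $K$), with $f \in \calR^{[0;r]}$ vanishing at $z_0$ if and only if $\pi_{z_0}$ divides $f$. If $h_1$ were to vanish at some $z_0 \in D(r)$, then $\pi_{z_0}$ would divide $h_1$ but, by coprimality, not $g_1$; hence $g_1/h_1 = g/h$ would exhibit a genuine pole at $z_0$, contradicting the hypothesis. The step I expect to require the most care is precisely this dictionary between coprimality in $\calR^{[0;r]}$ and absence of common zeros, which rests on the Weierstrass-preparation-type analysis of $\calR^{[0;r]}$ from \cite{L62}; modulo that input, the argument is short.
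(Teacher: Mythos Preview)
Your argument is correct. The paper's proof follows the same idea but executes it more directly: rather than restricting to each closed disk $D(r)$ and working in the PID $\calR^{[0;r]}$, it stays in $\calR^+$ itself, which is already a Bezout domain (the paper records this for $\calR^{[r;1[}$ with $r<1$, including $r=0$). One then assumes from the outset that $g$ and $h$ are coprime in $\calR^+$; the hypothesis ``no poles'' forces $h$ to have no zeroes in the whole open disk, whence $h \in (\calR^+)^\times$ by Lemma~\ref{invnozer}, and $g/h \in \calR^+$. Your detour through the $\calR^{[0;r]}$ and the final intersection $\calR^+ = \bigcap_r \calR^{[0;r]}$ is unnecessary, though harmless; it trades the Bezout property of $\calR^+$ for the more familiar PID structure on closed disks. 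The ``dictionary'' step you flag---that coprimality is equivalent to having no common zero---is used implicitly in both proofs and is indeed the only substantive input from \cite{L62}.
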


\begin{proof}
We can assume that $g$ and $h$ are coprime, so that $h$ has no zeroes. The function $h$  is then invertible in $\calR^+$ by lemma \ref{invnozer}, so that $g/h \in \calR^+$.
\end{proof}

\begin{lemm}
\label{frpdag}
We have $\Frac \calR^+ \cap \calE^\dagger = \Frac \calE^+$.
\end{lemm}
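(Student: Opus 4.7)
The inclusion $\Frac \calE^+ \subseteq \Frac \calR^+ \cap \calE^\dagger$ is immediate from $\calE^+ \subseteq \calR^+$ together with $\calE^+ \subseteq \calE^\dagger$; the latter, combined with the fact that $\calE^\dagger$ is a field, gives $\Frac \calE^+ \subseteq \calE^\dagger$.

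For the reverse inclusion, let $f \in \Frac \calR^+ \cap \calE^\dagger$ and choose $r < 1$ with $f \in \calE^{[r;1[}$. The plan is to exhibit a polynomial that clears the poles of $f$. Writing $f = g/h$ with $g, h \in \calR^+$, I would first invoke the Weierstrass factorization for $\calR^+$ (from \cite{L62}) to cancel common zeros, reducing to the case where $g$ and $h$ have no common zero in the open unit disk $D$. Since $f \in \calE^{[r;1[}$ is holomorphic on $A([r;1[)$, the denominator $h$ can have no zero on $A([r;1[)$: any such zero would have to be a zero of $g$ as well, contradicting coprimality. Hence all zeros of $h$ lie in $D([0;r])$, and there are only finitely many of them, each of finite order (a consequence of Weierstrass preparation applied on a closed disk). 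Let $P(X) \in K[X]$ be the monic polynomial whose roots are precisely these zeros, with the correct multiplicities.

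Then $h/P$ defines an element of $\calR^+$ with no zero in $D$, hence a unit of $\calR^+$ by lemma \ref{invnozer}, and so $h/P \in (\calR^+)^\times = (\calE^+)^\times \subseteq \calE^+$ by corollary \ref{invrp}. Since $P \in K[X] \subseteq \calE^+$ as well, this gives $h \in \calE^+$. Then $g = f \cdot h$ lies in $\calE^\dagger$ (as $\calE^\dagger$ is a ring containing both factors) and also in $\calR^+$, so $g \in \calR^+ \cap \calE^\dagger = \calE^+$. Therefore $f = g/h \in \Frac \calE^+$, as desired.

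The main obstacle is the Weierstrass-type reduction to a coprime representation of $f$ in $\Frac \calR^+$, which is what allows one to identify the poles of $f$ with the zeros of $h$; without it, zeros of $h$ on $A([r;1[)$ could be silently cancelled by $g$ and would prevent us from applying lemma \ref{bdzer} or corollary \ref{invrp} to $h$ directly. Once this classical fact about $\calR^+$ is granted, the argument assembles formally from corollary \ref{invrp}, lemma \ref{invnozer}, and the identity $\calR^+ \cap \calE^\dagger = \calE^+$ already recorded in the excerpt.
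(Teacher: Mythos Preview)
Your proof is correct and follows essentially the same approach as the paper's: take a coprime representation $g/h$ in $\calR^+$, observe that $h$ has no zeros on $A([r;1[)$ and hence only finitely many in $D$, and conclude $h\in\calE^+$. The paper is terser---it simply notes that both $g$ and $h$ have finitely many zeros and invokes lemma~\ref{bdzer} directly (for $r=0$ this gives $\calE^{[0;1[}=\calE^+$)---whereas you clear the zeros of $h$ with an explicit polynomial and then handle $g$ via $g=fh\in\calR^+\cap\calE^\dagger=\calE^+$; these are cosmetic variations on the same argument.
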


\begin{proof}
Take $g/h \in \Frac \calR^+$, and assume that $g$ and $h$ are coprime. If $g/h \in \calE^\dagger$, then $g$ and $h$ can only have finitely many zeroes, and hence both lie in $\calE^+$ by lemma \ref{bdzer}. 
\end{proof}

\begin{lemm}
\label{rpted}
If $g \in \calR$,  there exists $g^+ \in \calR^+$ and $g^\dagger \in \calE^\dagger$ such that $g=g^+ \cdot g^\dagger$.
\end{lemm}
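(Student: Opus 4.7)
Plan. The plan is to construct an element $g^+ \in \calR^+$ whose zeros in the open unit disk coincide with the zeros of $g$ in the annulus where $g$ is defined, and then to verify that the quotient $g/g^+$ lies in $\calE^\dagger$.

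First, I would dispose of the trivial cases. If $g = 0$ there is nothing to prove, so assume $g \neq 0$ and pick $r \in [0;1[$ with $g \in \calR^{[r;1[}$. If $g$ has only finitely many zeros in $A([r;1[)$, then lemma \ref{bdzer} already gives $g \in \calE^{[r;1[} \subset \calE^\dagger$, and we simply take $g^+ = 1$ and $g^\dagger = g$. In the remaining case, the zeros $z_1, z_2, \ldots$ of $g$ (listed with multiplicity) accumulate only at $|z| = 1$, since each closed sub-annulus $A([r;s])$ with $s < 1$ contains only finitely many of them.

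Next, I would invoke the construction (going back to Lazard \cite{L62}) of an element $g^+ \in \calR^+$ whose zeros in the open unit disk are exactly the $z_i$, with the correct multiplicities. The quotient $h := g/g^+$ is then a well-defined element of $\calR^{[r;1[}$ (the poles of $1/g^+$ at the $z_i$ are cancelled by the zeros of $g$) which has no zeros in $A([r;1[)$, and so is a unit there by lemma \ref{invnozer}. To conclude that $h \in \calE^{[r;1[} \subset \calE^\dagger$, by lemma \ref{bdzer} it suffices to show that $|h|_t = |g|_t/|g^+|_t$ stays bounded as $t \to 1^-$. This follows from a non-archimedean Jensen-type formula: for any $f \in \calR^{[r;1[}$, the slopes of the log-convex function $t \mapsto \log |f|_t$ are controlled by the zeros of $f$ with $|z| \leq t$; since $g$ and $g^+$ share all their zeros in $A([r;1[)$, the functions $\log |g|_t$ and $\log |g^+|_t$ differ by a bounded amount as $t \to 1^-$.

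The main obstacle is the construction of $g^+$ in the second step: realizing a given sequence $\{z_i\}$ with $|z_i| \to 1$ as the zero set inside the open unit disk of some element of $\calR^+$. The naive Weierstrass product $\prod_i (1 - X/z_i)$ does not converge in $\calR^+$ when $|z_i| \to 1^-$, because the partial products fail the Cauchy condition for the norms $|\cdot|_t$ with $t$ close to $1$; one needs a suitably renormalized product as in \cite{L62}.
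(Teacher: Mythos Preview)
Your approach is essentially the same as the paper's: pick $r$ with $g \in \calR^{[r;1[}$, use Lazard's theory \cite{L62} to produce $g^+ \in \calR^+$ whose divisor in $A([r;1[)$ matches that of $g$, and conclude that $g/g^+ \in \calE^{[r;1[}$ via lemma~\ref{bdzer}. The one unnecessary detour is your Jensen-type argument: you already observed that $h = g/g^+$ has no zeros in $A([r;1[)$, so lemma~\ref{bdzer} (equivalence of (1) and (2)) gives $h \in \calE^{[r;1[}$ immediately, without passing through boundedness of $|h|_t$.
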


\begin{proof}[Sketch of proof]
Take $g \in \calR^{[r;1[}$. There exists $g^+ \in \calR^+$ whose divisor (see \cite{L62}) is that of $g$, so that $g^+$ divides $g$ in $\calR^{[r;1[}$, and the quotient is in $\calE^{[r;1[}$ by lemma \ref{bdzer}. 
\end{proof}

\begin{lemm}
\label{algrob}
The field $K$ is algebraically closed inside $\Frac \calR$.
\end{lemm}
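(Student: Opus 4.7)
The plan is to exploit the derivation $d/dX$ on $\calR$, which extends to $\Frac\calR$, together with separability in characteristic zero. Suppose $\alpha \in \Frac\calR$ is algebraic over $K$ with minimal polynomial $P(T) \in K[T]$. Since $K$ has characteristic zero, $P$ is separable, and the inequality $\deg P' < \deg P$ combined with the irreducibility of $P$ forces $P'(\alpha) \neq 0$ in $\Frac\calR$ (else the minimal polynomial of $\alpha$ would divide $P'$). Differentiating the identity $P(\alpha)=0$ with respect to $X$ then yields $P'(\alpha) \cdot \alpha' = 0$ in $\Frac\calR$, whence $\alpha' = 0$.

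It then suffices to show that any $\alpha \in \Frac\calR$ with $\alpha'=0$ lies in $K$. I would write $\alpha = g/h$ with $g,h \in \calR^{[r;1[}$ for some $r<1$; since $\calR^{[r;1[}$ is a Bezout domain (as recalled at the start of this section), I may assume $g$ and $h$ are coprime there. The relation $g'h = gh'$ together with $\gcd(g,h)=1$ forces $h \mid h'$ in $\calR^{[r;1[}$, i.e.\ the logarithmic derivative $h'/h$ lies in $\calR^{[r;1[}$. But if $h$ had a zero $z_0 \in A([r;1[)$ of order $k \geq 1$, then $h'/h$ would have a simple pole at $z_0$ with residue $k\neq 0$, a contradiction. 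Hence $h$ has no zeros on $A([r;1[)$, so by Lemma \ref{invnozer} it is invertible and $\alpha \in \calR^{[r;1[}$. Expanding $\alpha = \sum_{n \in \ZZ} a_n X^n$ and using $\alpha'=0$ gives $n a_n = 0$ for all $n$, so $\alpha = a_0 \in K$.

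The one subtle step is the implication $h \mid h' \Rightarrow h$ has no zeros, but this is cleanly justified by viewing elements of $\calR^{[r;1[}$ as rigid analytic functions on the annulus and inspecting the residue of the logarithmic derivative; one can alternatively phrase this in terms of the divisor of an element of $\calR^{[r;1[}$, as used in \cite{L62} and in the proof of Lemma \ref{rpted}. Everything else is formal.
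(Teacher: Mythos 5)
Your proof is correct, and it takes a genuinely different route from the paper's. The paper argues algebraically: for any finite extension $F/K$ it shows that the natural map $F \otimes_K \Frac\calR \to \Frac(F \otimes_K \calR)$ is injective (by clearing denominators and using that $\{f_i\}$ is a basis), so $F \otimes_K \Frac\calR$ is a domain; but if $F \neq K$ embedded $K$-linearly into $\Frac\calR$, then $F \otimes_K F$ would embed into $F \otimes_K \Frac\calR$ and the latter would have zero divisors, a contradiction. You instead use the differential structure: the derivation $d/dX$ extends to $\Frac\calR$, separability of minimal polynomials over $K$ (characteristic zero) forces any $K$-algebraic element $\alpha$ to satisfy $\alpha'=0$, and the ring of constants of $\Frac\calR$ is exactly $K$ because a coprime representation $g/h$ with $(g/h)'=0$ forces $h\mid h'$, which the divisor (or residue) calculus on the annulus rules out unless $h$ is a unit, after which the Laurent expansion settles it. The two arguments trade off in interesting ways: the paper's tensor-product argument is purely formal, needs no derivation, and works verbatim in any characteristic (it would also apply with $\calR$ replaced by other $K$-algebras for which $F\otimes_K\calR$ is visibly a domain); your differential argument makes essential use of characteristic zero twice (separability and $n a_n = 0 \Rightarrow a_n = 0$) but stays closer to the analytic structure of $\calR^{[r;1[}$ that the rest of the paper already relies on, and it directly identifies the constants of the Robba ring, which is a useful fact in its own right.

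One small point worth making explicit: you need $d/dX$ to actually preserve $\calR^{[r;1[}$ for $r>0$. This holds because $|n|\leq 1$ in the $p$-adic norm, so $|n a_n| r^{n-1} \leq |a_n| r^n / r$ and the same decay conditions are met; with that noted, the rest is as clean as you say.
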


\begin{proof}
Let $F$ be a finite extension of $K$. We show that $F \otimes_K \Frac \calR \to \Frac (F \otimes_K \calR)$ is injective. If $f_1,\hdots,f_n$ is a basis of $F/K$, and if $\sum f_i \otimes a_i(X)/b_i(X) = 0$ in $\Frac (F \otimes_K \calR)$, then let $c_i(X) = \prod_{j \neq i} b_i(X)$. We have $\sum f_i \otimes a_i(X)c_i(X) = 0$ in $\Frac (F \otimes_K \calR)$ and hence in $F \otimes_K \calR$ so that $a_ic_i = 0$ for all $i$. The map is therefore injective, so that $F \otimes_K \Frac \calR$ is a domain. This would not be the case if there was a $K$-embedding of $F$ in $\Frac \calR$.
\end{proof}

\begin{rema}
\label{rtensf}
The proof of lemma \ref{algrob} shows that $\Frac (F \otimes_K \calR) = F \otimes_K \Frac \calR$.
\end{rema}

The ring $\calR^{]0;1[}$ is the ring of power series converging on the punctured open unit disk.

\begin{prop}
\label{bdrp}
If $h(X) \in \calR^{]0;1[}$ and $|h|_r$ is bounded as $r \to 0$, then $h \in \calR^+$.
\end{prop}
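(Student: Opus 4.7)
The plan is to unwind the definitions and use the explicit formula $|h|_r = \sup_{n \in \ZZ} |a_n| r^n$ for the Gauss-type norm on $\calR^{]0;1[}$, recalled just before lemma \ref{frechrp}. Write $h = \sum_{n \in \ZZ} a_n X^n$; since the defining convergence condition on $]0;1[$ already ensures $|a_n| r^n \to 0$ as $n \to \pm\infty$ for every $r \in (0;1)$, membership in $\calR^+$ amounts exactly to the vanishing of $a_n$ for every $n < 0$. So the whole task reduces to proving that the negative part of the Laurent expansion is zero.

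For that, I would fix an integer $n < 0$ and bound from below: by the very definition of the supremum,
\[
|h|_r \;\geq\; |a_n| \cdot r^n \;=\; |a_n| \cdot r^{-|n|}.
\]
If $a_n$ were nonzero, the right-hand side would tend to $+\infty$ as $r \to 0^+$, contradicting the hypothesis that $r \mapsto |h|_r$ stays bounded in a punctured neighborhood of $0$. Hence $a_n = 0$ for every $n < 0$, and $h \in \calR^+$.

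There is no real obstacle: the proof is essentially one line once one has the sup-formula for $|\cdot|_r$. Morally, the proposition is the inner-boundary ($r \to 0$) analogue of the implication ``(3) $\Rightarrow$ (1)'' in lemma \ref{bdzer}, which handled the outer boundary ($r \to 1$); the inner version is strictly easier because at $0$ the obstruction comes from finitely many (or rather, the negative) Laurent coefficients blowing up individually, whereas at $1$ one needs the stronger tools about zeros recalled from \cite{L62}.
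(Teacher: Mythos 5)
Your argument is correct and is essentially the paper's own proof, just phrased contrapositively: the paper deduces $|h_{-k}| \leq C r^k \to 0$ directly from the bound, while you assume $a_n \neq 0$ for some $n<0$ and derive a contradiction from $|h|_r \geq |a_n| r^n \to \infty$. Same one-line idea, same use of the sup-formula for $|\cdot|_r$.
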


\begin{proof}
Write $h(X) = \sum_{k \in \ZZ} h_k X^k$. We have $|h|_r = \max_k |h_k| r^k$. If $|h|_r \leq C$ for $r$ small enough, then $|h_{-k}| \leq C r^k$ as $r \to 0$, so that $h_{-k} = 0$ if $k \geq 1$.
\end{proof}

\section{Overconvergent substitution maps}
\label{secsubrob}

If $s(X) \in \OO_{\calE}$ is such that $\overline{s}(X) \in k \dpar{X}$ is nonzero and belongs to $X \cdot k \dcroc{X}$, then $s(X)^{-1} \in \OO_{\calE}$ and $s(X)^n \to 0$ in $\OO_{\calE}$ (for the weak topology) as $n \to + \infty$, so that if $f(X) \in \calE$, then $f(s(X))$ converges in $\calE$. This way, we get a substitution map $\phi : f \mapsto f \circ s$ that generalizes the Frobenius lifts (corresponding to those $s(X)$ such that $\overline{s}(X) = X^q$ where $q$ is a power of $p$, such as $X^q$ or $(1+X)^p-1$ or $X^q + \pi X$). Analogous  maps $\phi$ are studied in $p$-adic Hodge theory, and in the theory of $p$-adic dynamical systems.

Let $\OO_{\calE}^\dagger = \OO_{\calE} \cap \calE^\dagger$. In this section, we assume that $s(X) \in \OO_{\calE}^\dagger$ and we study the restriction of $\phi$ to $\calE^\dagger$ and its extension to $\calR$.

\begin{lemm}
\label{oedag}
If $h(X) \in \OO_{\calE}^\dagger$, there exists $r_h <1$ such that $h \in \calE^{[r_h;1[}$ and $|\pi h|_r < 1$ for all $r_h \leq r < 1$.
\end{lemm}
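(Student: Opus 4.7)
Since $h \in \OO_{\calE}^\dagger = \OO_{\calE} \cap \calE^\dagger$, write $h(X) = \sum_{n \in \ZZ} a_n X^n$ with $a_n \in \OO_K$ (so $|a_n| \leq 1$) and pick $r_0 < 1$ such that $h \in \calE^{[r_0;1[}$. The plan is to control $|h|_r = \sup_n |a_n| r^n$ as $r \to 1^-$ and show it stays strictly below $|\pi|^{-1}$; this is the required inequality since $|\pi h|_r = |\pi|\cdot |h|_r$ by multiplicativity of $|\cdot|_r$.

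For the positive-index part, the bound is automatic: for $n \geq 0$ and any $r<1$ we have $|a_n| r^n \leq 1$. All the work is in the principal part $\sum_{m \geq 1} a_{-m} X^{-m}$. Using that $|a_{-m}| r_0^{-m} \to 0$ as $m \to +\infty$ (convergence on the annulus $[r_0;1[$), I pick $N$ large enough that $|a_{-m}| r_0^{-m} \leq |\pi|$ for all $m \geq N$. Then for any $r \in [r_0;1[$ and $m \geq N$, we get
\[
|a_{-m}| r^{-m} = |a_{-m}| r_0^{-m} \cdot (r_0/r)^m \leq |\pi| \cdot (r_0/r)^m \leq |\pi|,
\]
uniformly in $r$.

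The only remaining terms are the finitely many $1 \leq m \leq N-1$, for which $|a_{-m}| r^{-m} \leq r^{-(N-1)}$. Since $|\pi|^{-1} > 1$, I can choose $r_h \in [r_0;1[$ large enough that $r_h^{-(N-1)} < |\pi|^{-1}$. Combining the three estimates, for every $r \in [r_h;1[$ one has $|h|_r < |\pi|^{-1}$, equivalently $|\pi h|_r < 1$, and $h \in \calE^{[r_h;1[}$ is automatic because $\calE^{[r_0;1[} \subset \calE^{[r_h;1[}$ when $r_h \geq r_0$.

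No real obstacle is anticipated; the only delicate point is choosing $N$ so that the tail of the principal part is already uniformly bounded by $|\pi|$, leaving enough slack below $|\pi|^{-1}$ to absorb the finitely many initial terms whose norms tend to $|a_{-m}| \leq 1$ as $r \to 1$.
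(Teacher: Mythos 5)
Your proof is correct and follows essentially the same route as the paper: decompose $h$ into its nonnegative and principal parts, observe that the $\OO_K$-coefficients make the positive part automatically bounded by $1$, and use overconvergence to control the principal part on annuli close to $1$. The paper phrases the principal-part estimate via monotonicity and left-continuity of $r\mapsto|h^-|_r$ at $r=1$ (with $|h^-|_1\leq 1$), whereas you make the same point explicit by splitting the principal part into a tail already small at $r_0$ and a finite head controlled by taking $r$ close to $1$; the content is the same.
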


\begin{proof}
Write $h=h^++h^-$ (according to positive and negative powers of $X$). There exists $s<1$ such that $h^- \in \calE^{[s;+\infty[}$. The function $r \mapsto |h^-|_r$ is defined for all $r \geq s$ and decreasing and $|h^-|_1 \leq 1$ since $h_n \in \OO_K$ for all $n$. Hence there exists $1 > r_h \geq s$ such that $|\pi h^-|_r < 1$ for all $r_h \leq r < 1$. Since $|\pi h^+|_r \leq |\pi|$ for all $r<1$, the claim follows.
\end{proof}

We now assume that our substitution map is given by a series $s(X) \in \OO_{\calE}^\dagger$ such that $\overline{s}(X) \in k \dpar{X}$ is nonzero and belongs to $X^2 \cdot k \dcroc{X}$. The $X$-adic valuation of $\overline{s}$ is the Weierstrass degree $\wideg(s)$ of $s$. In other words, we can write $s(X) = s^+(X) + \pi \cdot s^-(X)$ with $s^+ \in X \cdot \OO_K \dcroc{X}$ and $\wideg(s^+) = d$ for some $d \geq 2$, and $s^- \in \OO_{\calE}^\dagger$. Lemma \ref{oedag} implies that we can write $s(X)/X^d = s_d \cdot (1 + g)$ where $g \in \OO_{\calE}^\dagger$ and $s_d \in \OO_K^\times$ and there exists $r_s < 1$ such that $|g|_r < 1$ for all $r_s \leq r < 1$. 

If $h(X) = \sum_{n \in \ZZ} h_n X^n \in \calR^{[r;1[}$ for $r \geq r_s$, the series 
\begin{equation}
\label{ofs}\tag{$\Phi$}
\sum_{n \in \ZZ} h_n s_d^n X^{dn} (1+g)^n = \sum_{n \in \ZZ, \,k \geq 0}  h_n s_d^n X^{dn} \binom{n}{k} g^k
\end{equation}
converges in $\calR^{[r^{1/d};1[}$. We let $\phi(h)$ denote the sum of the series on the right. If $h \in \calE^{[r;1[}$, then $\phi(h) \in \calE^{[r^{1/d};1[} \subset \calE$ coincides with $\phi(h)$ as defined at the beginning of this section.

\begin{prop}
\label{sfo}
If $r_s<1$ is as above and if $r \geq r_s$, then 
\begin{enumerate}
\item $\phi(\calR^{[r;1[}) \subset \calR^{[r^{1/d};1[}$
\item if $|z| \geq r^{1/d}$ and $h \in \calR^{[r;1[}$, then $|s(z)| = |z|^d \geq r$, and $\phi(h)(z) = h(s(z))$
\item $|\phi(h)|_{r^{1/d}} = |h|_r$.
\end{enumerate}
\end{prop}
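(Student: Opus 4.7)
The plan is to reduce all three parts to the uniform estimate $|g|_\rho < 1$ for $\rho \in [r^{1/d}, 1)$. This follows from lemma \ref{oedag}: $r \geq r_s$, $r_s < 1$, and $d \geq 2$ force $\rho \geq r^{1/d} \geq r_s^{1/d} > r_s$, which places $\rho$ in the range where the lemma guarantees $|g|_\rho < 1$.

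For (1), I would bound each term of the double series \eqref{ofs} at the seminorm $|\cdot|_\rho$ by $|h_n| \rho^{dn} |g|_\rho^k$, using $|s_d|=1$ and $|\binom{n}{k}| \leq 1$. Since $\rho^d \in [r, 1)$ and $h \in \calR^{[r;1[}$, the factor $|h_n| \rho^{dn}$ tends to $0$ as $|n| \to \infty$, while $|g|_\rho^k \to 0$ as $k \to \infty$. The double series therefore converges in $\calR^{[\rho;\rho]}$, and since this holds for every $\rho \in [r^{1/d}, 1)$, the series converges in $\calR^{[r^{1/d};1[}$ by the Fr\'echet description of lemma \ref{frechrp}.

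For (2), fix $z \in \Cp$ with $|z| \geq r^{1/d}$: the bound $|g(z)| \leq |g|_{|z|} < 1$ combined with $|s_d|=1$ gives $|1+g(z)| = 1$ and therefore $|s(z)| = |z|^d \geq r$ by ultrametricity. Evaluating \eqref{ofs} at $z$ gives a doubly convergent series which may be regrouped via $\sum_{k \geq 0} \binom{n}{k} g(z)^k = (1+g(z))^n$ to identify $\phi(h)(z)$ with $\sum_n h_n s(z)^n = h(s(z))$.

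The delicate part is (3). The upper bound $|\phi(h)|_{r^{1/d}} \leq |h|_r$ is immediate from applying the ultrametric inequality to \eqref{ofs}, since $|(1+g)^n|_{r^{1/d}} = 1$. For the matching lower bound I would introduce the auxiliary series
\[
\tilde\phi(h) := \sum_{n \in \ZZ} h_n s_d^n X^{dn} \in \calR^{[r^{1/d};1[},
\]
whose $X$-expansion directly yields $|\tilde\phi(h)|_{r^{1/d}} = \sup_n |h_n| r^n = |h|_r$. The difference is
\[
\phi(h) - \tilde\phi(h) = \sum_{n \neq 0} h_n s_d^n X^{dn} \bigl((1+g)^n - 1\bigr),
\]
and since $1+g$ is a unit in $\calR^{[r^{1/d};1[}$ (as $|g|_{r^{1/d}} < 1$), the binomial series for $(1+g)^n$ converges for every $n \in \ZZ$ with $p$-adically integral coefficients, giving $|(1+g)^n - 1|_{r^{1/d}} \leq |g|_{r^{1/d}}$ for $n \neq 0$. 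Hence $|\phi(h) - \tilde\phi(h)|_{r^{1/d}} \leq |h|_r \cdot |g|_{r^{1/d}} < |h|_r$, and the ultrametric inequality forces $|\phi(h)|_{r^{1/d}} = |\tilde\phi(h)|_{r^{1/d}} = |h|_r$. The essential move is the comparison with $\tilde\phi(h)$: without it, the only route to the lower bound would be to establish surjectivity of $s : C(r^{1/d}) \to C(r)$ via a Weierstrass or Newton polygon argument, which is considerably heavier.
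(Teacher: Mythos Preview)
Your proof is correct and is a detailed elaboration of the paper's own proof, which reads in its entirety: ``This is clear from equation ($\Phi$) and the definition of $r_s$.'' Your comparison with the auxiliary series $\tilde\phi(h)$ in part (3) is precisely the way to make rigorous the intuition that the $k=0$ terms dominate in ($\Phi$); this is exactly what the paper has in mind, so there is no genuine difference of approach.
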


\begin{proof}
This is clear from equation ($\Phi$) and the definition of $r_s$.
\end{proof}

\section{Eigenvalues of $\phi$ and $(\Frac \calR)^{\phi=1}$}
\label{secegvfr}

In \S \ref{secmainth} below, we prove that $(\Frac \calR)^{\phi=1} =K$ if we assume that $s(X) \in X \cdot \OO_K\dcroc{X}$ (theorem \ref{fracphi}). We expect this result to hold for a general overconvergent substitution.

\begin{conj}
\label{fraconj}
We have $(\Frac \calR)^{\phi=1} =K$ in general.
\end{conj}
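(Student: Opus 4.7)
The plan is to reduce the conjecture to a classification of $\phi$-eigenvectors in $\calR$ itself. First I would take $f \in (\Frac \calR)^{\phi=1}$ and write $f = g/h$ with $g, h \in \calR^{[r_0;1[}$ for some $r_0 < 1$ with $r_0 \geq r_s$. Using that $\calR^{[r_0;1[}$ is a Bezout domain, I may assume $g$ and $h$ are coprime there. The equation $\phi(g) h = g \phi(h)$, combined with coprimality, forces $g \mid \phi(g)$ and $h \mid \phi(h)$ in $\calR^{[r_0;1[}$, and cancellation then yields $\phi(g) = u g$ and $\phi(h) = u h$ for a common factor $u \in \calR^{[r_0;1[}$. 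The task thus reduces to showing that any two nonzero $\phi$-eigenvectors in $\calR$ sharing the same eigenvalue are $K$-proportional.

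The next step is to constrain the eigenvalue $u$. Proposition \ref{sfo} gives the functional equation
\[ |u|_{r^{1/d}} = |g|_r / |g|_{r^{1/d}} \]
for $r$ close to $1$; iterating expresses $|g|_r$ in terms of the values $|u|_{r^{1/d^k}}$. Since $r \mapsto \log|g|_r$ is piecewise affine (its graph is the Newton polygon of $g$ read off at the boundary), the scaling $r \mapsto r^d$ relating the log-norms of $u$ and $g$ is quite rigid. I would aim to deduce that $\log|u|_r$ is constant for $r$ near $1$, and then, combining lemma \ref{invnozer} and lemma \ref{bdzer} applied to both $u$ and $u^{-1}$, conclude that $u$ and $u^{-1}$ must lie in $\calE^\dagger$ with no boundary zeros, and in fact $u \in K$.

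Once $u \in K$, the problem becomes the classification of $\calR^{\phi=\lambda}$ for $\lambda \in K$. In the finite height case this is carried out in theorem A via the logarithm $\log_s \in \calR^+$ and the $\psi$ operator of \S\ref{secpsi}. In the overconvergent case I would try to adapt these: using the decomposition $\calR = \calR^+ + \calE^\dagger$, one might split eigenvectors into a holomorphic and an overconvergent part and build an analogue of $\psi$ on $\calR$. Once $\calR^{\phi=\lambda}$ is shown to be at most one-dimensional over $K$ for every $\lambda$, the conclusion $g/h \in K$ is immediate.

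The main obstacle is the absence of a $\phi$-stable subring $\calR^+$ in the overconvergent setting: when $s(X)$ has negative powers of $X$, the substitution $s$ does not define an endomorphism of the open unit disk, so the dynamical-systems viewpoint used for finite height --- where $0$ is an attracting fixed point of $s$ --- is unavailable. This breaks both the usual construction of $\log_s$ and the natural $\psi$ operator, and it is precisely the reason the statement is only a conjecture in general. I would expect the resolution to require either a substitute fixed point, obtained in some finite or perfectoid extension in which $s$ acquires one, or a proof that bypasses the disk entirely and proceeds purely via the annular norms $|\cdot|_r$ as outlined in the second step.
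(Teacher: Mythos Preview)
The statement is a \emph{conjecture} in the paper; there is no proof of it in general. The paper establishes it only in the finite height case (Theorem~\ref{fracphi}), by a route entirely different from yours, and gives partial results toward the general overconvergent case in \S\ref{secegvfr}.

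Your first step --- writing $f=g/h$ with $g,h$ coprime in $\calR^{[r_0;1[}$ and extracting a common eigenvalue $u$ with $\phi(g)=ug$, $\phi(h)=uh$ --- is exactly Proposition~\ref{conjequiv}: the conjecture is equivalent to $\dim_K \calR^{\phi=\lambda}\leq 1$ for all $\lambda\in\calR$. So far you agree with the paper.

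The second step contains a genuine gap. From $|u|_{r^{1/d}}=|g|_r/|g|_{r^{1/d}}$ you want to conclude that $\log|u|_r$ is constant near $1$, hence $u\in K$. But this fails already for a single eigenvector: take $g=X$, so $u=s(X)/X$ and $|u|_r=r^{d-1}$ for $r$ near $1$, which is not constant. The paper carries out the norm analysis you describe and obtains only $u\in\OO_{\calE}^\dagger$ (Proposition~\ref{egvoed}), not $u\in K$. Having \emph{two} independent eigenvectors with the same $u$ does not obviously improve this: the resulting identity $|g|_r/|g|_{r^{1/d}}=|h|_r/|h|_{r^{1/d}}$ is just the statement that $|g/h|_r$ is $\phi$-periodic, which is what one started with. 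Forcing $u$ to be a scalar from the existence of two independent eigenvectors is equivalent to the conjecture itself, so a purely norm-theoretic argument of the kind you sketch is not expected to work.

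Your third step correctly names the obstruction (no $\phi$-stable $\calR^+$, no attracting fixed point for $s$) and is honestly speculative. For comparison, the paper's actual partial progress in the overconvergent case uses different tools: a Wronskian argument under a $p$-power-lift hypothesis to bound $\dim_K\calR^{\phi=\lambda}$ (Proposition~\ref{pizdim}), and a bootstrapping trick (Proposition~\ref{dimuniv}) showing that any uniform dimension bound automatically improves to the bound $1$. Neither of these settles the conjecture, but they are the concrete handles the paper offers in place of the norm rigidity you hoped for.
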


In the rest of this section, we give some results related to this conjecture. These results are not used in the rest of the article. We say that $\lambda \in \calR$ is an eigenvalue of $\phi$ is there exists a nonzero $h \in \calR$ such that $\phi(h)= \lambda \cdot h$. This terminology is not quite correct as $\phi$ is only a semilinear map on $\calR$.

\begin{prop}
\label{conjequiv}
We have $(\Frac \calR)^{\phi=1} =K$ iff $\dim_K \calR^{\phi=\lambda} \leq 1$ for all $\lambda \in \calR$.
\end{prop}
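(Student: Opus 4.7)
My plan is to prove both implications using only that $\phi$ is a ring endomorphism of $\calR$ and that $\calR$ is a B\'ezout domain (as recalled at the start of \S\ref{seclaz}); this last fact is what allows me to write any element of $\Frac \calR$ in ``lowest terms'' $g/h$ with $g$ and $h$ coprime, and to invoke the usual coprimality tricks.

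For the forward direction, I assume $(\Frac \calR)^{\phi=1} = K$. Given $\lambda \in \calR$ and two nonzero $h_1, h_2 \in \calR^{\phi=\lambda}$, I form $f = h_1/h_2 \in \Frac \calR$ and compute $\phi(f) = \phi(h_1)/\phi(h_2) = (\lambda h_1)/(\lambda h_2) = f$. Hence $f \in (\Frac \calR)^{\phi=1} = K$, so $h_1 \in K \cdot h_2$ and $\dim_K \calR^{\phi=\lambda} \leq 1$.

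For the reverse direction, I assume $\dim_K \calR^{\phi=\lambda} \leq 1$ for every $\lambda \in \calR$ and take a nonzero $f \in (\Frac \calR)^{\phi=1}$. Using the B\'ezout property I write $f = g/h$ with $g, h \in \calR$ coprime, so that there exist $x, y \in \calR$ with $xg + yh = 1$. The equation $\phi(f) = f$ rewrites as $g \phi(h) = h \phi(g)$. Multiplying the B\'ezout identity by $\phi(g)$ and substituting $h \phi(g) = g \phi(h)$ gives $\phi(g) = g\bigl(x \phi(g) + y \phi(h)\bigr)$, so $g$ divides $\phi(g)$ in $\calR$; by symmetry $h$ divides $\phi(h)$ in $\calR$. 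Set $\lambda := \phi(h)/h \in \calR$; then the identity $g \phi(h) = h \phi(g)$ also forces $\phi(g)/g = \lambda$. Hence both $g$ and $h$ lie in $\calR^{\phi=\lambda}$, which is at most one-dimensional over $K$ by hypothesis. Since $h \neq 0$, this gives $g \in K \cdot h$ and thus $f \in K$.

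There is no serious obstacle; the only delicate point is to make sure that the eigenvalue $\lambda$ produced from the fractional eigenvector lives in $\calR$ (not merely in $\Frac \calR$), which is exactly what the coprimality of $g$ and $h$ buys. The proposition simply records a reformulation of conjecture \ref{fraconj} as a statement purely inside $\calR$, which is often more convenient to work with than the statement in $\Frac \calR$.
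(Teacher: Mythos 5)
Your proof is correct and follows essentially the same path as the paper's. The only technical nicety you gloss over is that the paper records the B\'ezout property for the rings $\calR^{[r;1[}$ (not for $\calR$ itself), so the paper's proof first picks $r<1$ with $g,h,\phi(g),\phi(h) \in \calR^{[r;1[}$ and reduces to a coprime pair there; you should do the same rather than assert a coprime representation directly in $\calR$. Your explicit use of the B\'ezout identity $xg+yh=1$ to derive $g \mid \phi(g)$ is a small expansion of what the paper leaves implicit, but otherwise the argument is identical.
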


\begin{proof}
If $g,h \in \calR^{\phi=\lambda}$, then $g/h \in (\Frac \calR)^{\phi=1}$. If $(\Frac \calR)^{\phi=1} = K$, then $g/h \in K$ so that $\dim_K \calR^{\phi=\lambda} \leq 1$. Conversely, take $g/h \in (\Frac \calR)^{\phi=1}$, so that $g \cdot \phi(h) = h \cdot \phi(g)$. Take $r<1$ such that $g,h,\phi(g)$ and $\phi(h)$ belong to $\calR^{[r;1[}$. We can assume  that $g$ and $h$ are coprime in $\calR^{[r;1[}$, and then $g$ divides $\phi(g)$ and $h$ divides $\phi(h)$ in $\calR^{[r;1[}$. The common quotient $\lambda \in \calR$ is such that $g,h \in \calR^{\phi=\lambda}$. If $\dim_K \calR^{\phi=\lambda} = 1$, then $g/h \in K$.
\end{proof}

\begin{prop}
\label{egvoed}
If $\lambda \in \calR$ and $\calR^{\phi = \lambda} \neq \{ 0 \}$, then $\lambda \in \OO_{\calE}^\dagger$.
\end{prop}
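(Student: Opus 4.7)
The plan is to control $|\lambda|_\rho$ near $\rho = 1$ via the norm identity coming from Proposition~\ref{sfo}(3) and multiplicativity. Fix a nonzero $h \in \calR$ with $\phi(h) = \lambda h$. For $\rho$ close enough to $1$, one has
$$|\lambda|_\rho \cdot |h|_\rho = |\phi(h)|_\rho = |h|_{\rho^d},$$
and hence $|\lambda|_\rho = |h|_{\rho^d}/|h|_\rho$. To conclude $\lambda \in \OO_\calE^\dagger$ I must show: (i) $|\lambda|_\rho$ is bounded as $\rho \to 1^-$, so that $\lambda \in \calE^\dagger$ by Lemma~\ref{bdzer}; and (ii) $\sup_n |\lambda_n| \leq 1$, so that $\lambda \in \OO_\calE$ (the condition $|\lambda_{-n}| \to 0$ is automatic from $\lambda \in \calR$). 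Statement (ii) will follow from the bound $\sup_n |\lambda_n| \leq \liminf_{\rho \to 1^-} |\lambda|_\rho$, obtained by letting $\rho \to 1^-$ in $|\lambda|_\rho \geq |\lambda_n| \rho^n$ for each $n$.

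The function $\rho \mapsto \log |h|_\rho$ is convex in $\log \rho$, so it is eventually monotone as $\rho \to 1^-$. If it is eventually nondecreasing, then $|h|_{\rho^d} \leq |h|_\rho$ since $\rho^d < \rho < 1$, and the identity above immediately gives $|\lambda|_\rho \leq 1$ near $1$, settling both (i) and (ii) at once. If it is eventually nonincreasing, then $|h|_\rho$ is bounded above near $1$; moreover the inequality $|h|_\rho \geq |h_{n_0}| \rho^{n_0}$ for any fixed $n_0$ with $h_{n_0} \neq 0$ keeps $|h|_\rho$ bounded below away from $0$, and monotonicity then forces $|h|_\rho$ to converge to some $M > 0$. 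Consequently $|\lambda|_\rho \to M/M = 1$, and again (i) and (ii) both hold.

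Beyond this norm identity, the proof uses only log-convexity of $|h|_\rho$ (which guarantees eventual monotonicity) and Lemma~\ref{bdzer}. The one point that requires mild care is the eventually-nonincreasing case, where one combines the trivial lower bound coming from a single nonzero coefficient of $h$ with monotonicity to force convergence of $|h|_\rho$ to a strictly positive limit; I do not expect a serious obstacle here.
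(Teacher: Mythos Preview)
Your proof is correct and follows essentially the same strategy as the paper: both use the identity $|\lambda|_\rho = |h|_{\rho^d}/|h|_\rho$ from Proposition~\ref{sfo} together with the eventual monotonicity of $\rho \mapsto |h|_\rho$. The only minor difference is in the case where $|h|_\rho$ stays bounded (equivalently $h \in \calE^\dagger$): the paper normalizes $h$ to lie in $\OO_\calE^\times$ and reads off $\lambda = \phi(h)/h \in \OO_\calE \cap \calE^\dagger$ algebraically, whereas you argue analytically that $|h|_\rho$ converges to a positive limit and hence $|\lambda|_\rho \to 1$; both work, and the paper's normalization is slightly quicker.
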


\begin{proof}
Take $h \in \calR^{\phi = \lambda}$. If $h \in  \calE^\dagger$, we can assume that $h \in \OO_{\calE}^\times$ and then $\lambda = \phi(h)/h \in \OO_{\calE} \cap \calE^\dagger = \OO_{\calE}^\dagger$. If $h \notin \calE^\dagger$, then $r \mapsto |h|_r$ is eventually increasing as $r \to 1$ by lemma \ref{bdzer}. If $r<1$ is close enough to $1$, then $|\phi(h)|_r = |h|_{r^d}$ by proposition \ref{sfo}, so that $|\lambda|_r = |h|_{r^d} / |h|_r \leq 1$. By lemma \ref{bdzer}, $\lambda \in \calE^\dagger$. In addition, if we write $\lambda(X) = \sum \lambda_n X^n$, then $|\lambda_n| r^n \leq 1$ for all $n$ and all $r<1$ close to $1$, hence $\lambda_n \in \OO_K$ for all $n$.
\end{proof}

\begin{prop}
\label{robinvar}
We have $\calR^{\phi=1} = K$.
\end{prop}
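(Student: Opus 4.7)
The plan is to first reduce to the overconvergent setting using the norm identity from proposition \ref{sfo}, then analyze $\phi(h)=h$ modulo $\pi$, and finally run a $\pi$-adic induction.

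Given $h \in \calR$ with $\phi(h)=h$, fix $r \geq r_s$ with $h \in \calR^{[r;1[}$. Proposition \ref{sfo} gives $|h|_{r^{1/d}} = |\phi(h)|_{r^{1/d}} = |h|_r$, and iterating (with $r$ successively replaced by $r^{1/d}, r^{1/d^2}, \ldots$, each still at least $r_s$) yields $|h|_{r^{1/d^n}} = |h|_r$ for all $n \geq 0$. Since $\log|h|_s$ is convex in $\log s$ and constant on the sequence $r^{1/d^n}$ accumulating to $1$, a convexity argument on each subinterval $[r^{1/d^{n+1}}, r^{1/d^n}]$ shows $|h|_s \leq |h|_r$ for all $s \in [r,1)$. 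By lemma \ref{bdzer}, $h \in \calE^{[r;1[} \subset \calE^\dagger$, and after multiplying $h$ by a suitable power of $\pi$ I may assume $h \in \OO_{\calE}^\dagger$ with $\overline{h} \neq 0$.

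Reducing $\phi(h)=h$ modulo $\pi$ gives $\overline{h}(\overline{s}(X)) = \overline{h}(X)$ in $k\dpar{X}$. Writing $\overline{s}(X) = X^d u(X)$ with $u \in k\dcroc{X}^\times$ and $d = \wideg(s) \geq 2$, the $X$-adic valuation $v_X$ satisfies $v_X(\overline{h}(\overline{s})) = d \cdot v_X(\overline{h})$, so $v_X(\overline{h}) = 0$ and $\overline{h} = \sum_{m \geq 0} c_m X^m$ with $c_0 \neq 0$. For $m \geq 1$ the coefficient of $X^m$ in $\overline{h}(\overline{s})$ only involves $c_n$ with $n \leq m/d < m$, so a straightforward induction on $m$ gives $c_m = 0$ for all $m \geq 1$, i.e., $\overline{h} \in k$.

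Lifting $\overline{h}$ to $\tilde c_0 \in \OO_K$ and using $\phi(\tilde c_0) = \tilde c_0$, the element $h_1 := (h - \tilde c_0)/\pi$ lies in $\OO_{\calE}^\dagger$ and still satisfies $\phi(h_1) = h_1$. Iterating produces $\tilde c_n \in \OO_K$ and $h_{N+1} \in \OO_{\calE}^\dagger$ with $h = \sum_{n=0}^N \pi^n \tilde c_n + \pi^{N+1} h_{N+1}$ for every $N$. Comparing the $X^k$-coefficient for $k \neq 0$ shows it lies in $\pi^{N+1}\OO_K$ for every $N$ and so vanishes, giving $h = \sum_{n \geq 0} \pi^n \tilde c_n \in \OO_K \subset K$. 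I expect the main obstacle to be the first step: the log-convexity bookkeeping needed to force $h$ into $\calE^\dagger$; after that everything reduces to formal manipulation with Weierstrass degrees and $\pi$-adic completeness.
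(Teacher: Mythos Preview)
Your proof is correct, and its first step---pushing $h$ into $\calE^\dagger$ via the norm identity $|h|_{r^{1/d}}=|h|_r$ from proposition~\ref{sfo} together with log-convexity---coincides with the paper's opening move (the paper phrases this more tersely, but the content is the same; note only that your subintervals should read $[r^{1/d^n},r^{1/d^{n+1}}]$ since $r<1$). After this point the two arguments diverge. The paper proceeds geometrically: if $g\in\calE^\dagger$ had a zero $z$ in the outer annulus, then any $y$ with $s(y)=z$ would satisfy $g(y)=\phi(g)(y)=g(s(y))=0$, and iterating this produces infinitely many zeroes, contradicting lemma~\ref{bdzer}; hence $g$ is zero-free, evaluation $g\mapsto g(z)$ at a fixed algebraic point is injective, so $(\calE^{[r;1[})^{\phi=1}$ is a finite $K$-algebra and hence a field, and lemma~\ref{algrob} (algebraic closedness of $K$ in $\Frac\calR$) finishes. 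Your route is instead a $\pi$-adic descent: the Weierstrass-degree identity $v_X(\overline{h}(\overline{s}))=d\cdot v_X(\overline{h})$ forces $\overline{h}\in k$, and successive approximation modulo $\pi^{N}$ pins $h$ down in $\OO_K$. Your argument is more elementary---it bypasses the zero-propagation step and the appeal to lemma~\ref{algrob} entirely, and in passing establishes the purely algebraic facts $(k\dpar{X})^{\overline{\phi}=1}=k$ and $(\OO_\calE)^{\phi=1}=\OO_K$---whereas the paper's approach is more analytic and ties in naturally with the divisor-theoretic viewpoint used elsewhere (e.g.\ in propositions~\ref{egvoed} and~\ref{fracrp}).
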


\begin{proof}
Take $g \in \calR$ such that $\phi(g)=g$. Proposition \ref{sfo} implies that $|g|_{s^{1/d}}=|g|_s$ if $s$ is close to $1$, so that the function $s \mapsto |g|_s$ is bounded as $s \to 1$. By lemma \ref{bdzer}, we have $g \in \calE^\dagger$. Take $r \geq r_s$ so that by proposition \ref{sfo}, $\phi(\calE^{[r;1[}) \subset \calE^{[r^{1/d};1[}$ and if $|y| \geq r^{1/d}$ and $h \in \calE^{[r;1[}$, then $|s(y)| \geq r$, and $\phi(h)(y) = h(s(y))$. If $|z| \geq r$ and $s(y)=z$, then $|y| \geq r^{1/d}$. Therefore if $g(z)=0$, then $g(y) = \phi(g)(y) = g(z) = 0$. This implies that if $g$ has a zero in $A([r;1[)$, then it has infinitely many zeroes. By lemma \ref{bdzer}, this is not possible if $g \in \calE^\dagger$.

Pick $z \in A([r;1[) \cap K^{\alg}$. The function $g \mapsto g(z)$, from $(\calE^{[r;1[})^{\phi=1}$ to $K(z)$, is therefore injective, and hence $(\calE^{[r;1[})^{\phi=1}$ is a finite dimensional $K$-vector space. It is also a domain, and hence a field extension of $K$. Since $K$ is algebraically closed in $\calE^\dagger$, we get that $(\calE^{[r;1[})^{\phi=1}=K$. This is true for all $r$ close to $1$, so that $(\calE^\dagger)^{\phi=1} = K$.
\end{proof}

We say that $s(X)$ is a $p$-power lift if in $k \dcroc{X}$, we have $\overline{s}(X) \in k \dcroc{X^p}$. This is equivalent to saying that $s'(X) \in \pi \OO_{\calE}^\dagger$. Frobenius lifts are examples of $p$-power lifts.

\begin{prop}
\label{pizdim}
If $s(X)$ is a $p$-power lift, and $\lambda \in \OO_{\calE}^\dagger$, then $\dim_K \calR^{\phi=\lambda} \leq n(\lambda)-1$ where $n(\lambda) \in \ZZ$ is such that $n(\lambda) > 2 \cdot \val(\lambda) / \val(s') +1$.
\end{prop}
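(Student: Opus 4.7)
The plan is a Wronskian argument. Suppose, for contradiction, that $h_1, \ldots, h_n \in \calR^{\phi=\lambda}$ are $K$-linearly independent, with $n = n(\lambda)$. Form the Wronskian
$$W = \det\bigl(h_j^{(i)}\bigr)_{\substack{0 \leq i \leq n-1 \\ 1 \leq j \leq n}} \in \calR.$$
I will show that $\phi(W)$ equals a specific scalar multiple of $W$, use proposition \ref{egvoed} to derive a contradiction, and finish via the Wronskian criterion.

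For the first step, I express each $\phi(h_j^{(i)})$ in two ways. On one hand, Fa\`a di Bruno's formula applied to $(h_j \circ s)^{(i)} = (\phi h_j)^{(i)}$ gives a lower triangular relation $(\phi h_j)^{(i)} = \sum_{k \leq i} B_{ik}(s', s'', \ldots) \cdot \phi(h_j^{(k)})$ with diagonal $B_{ii} = (s')^i$, so $\det B = (s')^{n(n-1)/2}$. On the other hand, the Leibniz rule applied to $\phi(h_j) = \lambda h_j$ gives $(\phi h_j)^{(i)} = \sum_{k \leq i} \binom{i}{k} \lambda^{(i-k)} h_j^{(k)}$, a lower triangular relation with diagonal $\lambda$, so that $\det C = \lambda^n$. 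Combining these yields the matrix identity $\bigl(\phi(h_j^{(k)})\bigr) = B^{-1} C \cdot \bigl(h_j^{(k)}\bigr)$, whose determinant gives
$$\phi(W) = \frac{\lambda^n}{(s')^{n(n-1)/2}} \cdot W.$$

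Next, if $W \neq 0$, proposition \ref{egvoed} forces $\lambda^n/(s')^{n(n-1)/2} \in \OO_{\calE}^\dagger$, so its $\pi$-adic valuation is nonnegative. Since $s$ is a $p$-power lift, $s' \in \pi \OO_{\calE}^\dagger$ gives $\val(s') \geq 1$, and one may rewrite the resulting inequality as $n \leq 1 + 2 \val(\lambda)/\val(s')$, contradicting $n = n(\lambda)$. Hence $W = 0$.

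The final step is the classical Wronskian criterion in the differential field $(\Frac \calR, d/dX)$: vanishing of $W$ forces $K$-linear dependence of $h_1, \ldots, h_n$, provided the field of constants is exactly $K$. I expect this identification to be the main (small) technical point, to be verified as follows: given $f = g/h \in \Frac \calR$ with $f' = 0$ and $g, h$ coprime in some $\calR^{[r;1[}$, the relation $g'h = gh'$ gives $g \mid g'$, hence $g'/g \in \calR^{[r;1[}$; a logarithmic derivative without poles forces $g$ to have no zeroes in $A([r;1[)$, so by lemma \ref{invnozer} $g$ is invertible, and similarly for $h$; therefore $f \in \calR$, and a Laurent series with zero derivative is a constant in $K$. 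This contradicts the assumed independence of the $h_j$, completing the proof.
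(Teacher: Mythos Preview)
Your argument is correct and is essentially the paper's own proof: both compute that the Wronskian $W$ of $n$ elements of $\calR^{\phi=\lambda}$ satisfies $\phi(W)=\lambda^n/(s')^{n(n-1)/2}\cdot W$, invoke proposition \ref{egvoed} to force $W=0$ when $n>2\val(\lambda)/\val(s')+1$, and conclude via the Wronskian criterion. The only difference is presentational---you package the derivative computation via Fa\`a di Bruno and Leibniz as a matrix identity, whereas the paper writes the inductive relation $\phi(f^{(m)})\in f^{(m)}\cdot\lambda/(s')^m+\sum_{j<m}\calR\cdot f^{(j)}$ directly---and you add a verification that the field of constants of $(\Frac\calR,d/dX)$ is $K$, which the paper leaves implicit.
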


\begin{proof}
Take $f \in \calR^{\phi=\lambda}$, so that $f(s(X)) = \lambda(X) \cdot f(X)$. We have \[ f'(s(X)) \cdot s'(X) = \lambda(X) \cdot f'(X) + \lambda'(X) \cdot f(X) \] and hence $\phi(f') \in f' \cdot \lambda/s' + \calR \cdot f$. Likewise for all $m \geq 1$, \[ \phi(f^{(m)}) \in f^{(m)} \cdot \lambda/(s')^m + \calR \cdot f + \calR \cdot f' + \cdots + \calR \cdot f^{(m-1)}. \]
Given $n$ elements $f_1,\hdots,f_n$ of $\calR$, let $W(f_1,\hdots,f_n)$ denote their Wronskian 
\[ W(f_1,\hdots,f_n) = \det 
\begin{pmatrix} 
f_1 & \cdots & f_n \\
f_1' & \cdots & f_n' \\
\vdots & & \vdots \\
f_1^{(n-1)} & \cdots & f_n^{(n-1)} 
\end{pmatrix} \] 
If $f_1,\hdots,f_n \in \calR^{\phi=\lambda}$, then $W(f_1,\hdots,f_n)$ belongs to $\calR^{\phi = \lambda^n / (s')^{n(n-1)/2}}$ by the above. Take $n=n(\lambda) \in \ZZ$ such that $n > 2 \cdot \val(\lambda) / \val(s') +1$. By proposition \ref{egvoed}, $\calR^{\phi = \lambda^n / (s')^{n(n-1)/2}} = \{0\}$, and hence $W(f_1,\hdots,f_n) = 0$, so that $f_1,\hdots,f_n$ are linearly dependent over $K$. 

Therefore, $\dim_K \calR^{\phi=\lambda} \leq n(\lambda)-1$.
\end{proof}

\begin{rema}
\label{rphinvoc}
If $s(X)$ is a $p$-power lift,  and $\lambda=1$, we can take $n(\lambda) = 2$, and we get a new proof that $\calR^{\phi=1}=K$ in this case.
\end{rema}

\begin{prop}
\label{dimuniv}
If there exists $C \in \ZZ_{\geq 1}$ such that $\dim_K \calR^{\phi=\lambda} \leq C$ for all $\lambda \in \OO_{\calE}^\dagger$, then $\dim_K \calR^{\phi=\lambda} \leq 1$ for all $\lambda \in \OO_{\calE}^\dagger$.
\end{prop}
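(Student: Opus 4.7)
The plan is to argue the contrapositive: assuming some eigenspace $\calR^{\phi=\lambda_0}$ has $K$-dimension at least $2$, I would manufacture eigenspaces of arbitrarily large dimension and thereby contradict the uniform bound $C$. Concretely, I would pick $K$-linearly independent $g, h \in \calR^{\phi=\lambda_0}$ and form the ratio $\mu := g/h \in (\Frac \calR)^{\phi=1}$; by linear independence of $g$ and $h$ over $K$, we have $\mu \notin K$.

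The first key step is to apply lemma \ref{algrob}: since $K$ is algebraically closed in $\Frac \calR$, the element $\mu \notin K$ must be transcendental over $K$, so $1, \mu, \mu^2, \ldots, \mu^n$ are $K$-linearly independent for every $n \geq 0$. The second step exploits the multiplicativity of $\phi$: for each $k \geq 1$ and each $0 \leq i \leq k$, the monomial $g^i h^{k-i}$ belongs to $\calR^{\phi=\lambda_0^k}$, and these $k+1$ elements are $K$-linearly independent because dividing through by $h^k$ in $\Frac \calR$ turns them into $\mu^0, \mu^1, \ldots, \mu^k$. Since $\OO_{\calE}^\dagger$ is a ring, $\lambda_0^k \in \OO_{\calE}^\dagger$, and we obtain $\dim_K \calR^{\phi=\lambda_0^k} \geq k+1$; choosing $k \geq C$ yields the desired contradiction, forcing $\dim_K \calR^{\phi=\lambda} \leq 1$ for every $\lambda \in \OO_{\calE}^\dagger$.

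I do not foresee a real obstacle here: the argument is essentially a two-line combination of lemma \ref{algrob} with the multiplicativity of $\phi$. The only conceptual point worth emphasising is that once any nonconstant element of $(\Frac \calR)^{\phi=1}$ is known to exist, it is automatically transcendental over $K$, and this is precisely what allows the monomial construction in $g$ and $h$ to produce unboundedly many $K$-linearly independent eigenvectors in a single eigenspace $\calR^{\phi=\lambda_0^k}$.
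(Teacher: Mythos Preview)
Your proof is correct and is essentially the contrapositive of the paper's argument: the paper takes arbitrary $g,h \in \calR^{\phi=\lambda}$, observes that the $m+1$ monomials $g^i h^{m-i}$ lie in $\calR^{\phi=\lambda^m}$ and must be $K$-linearly dependent once $m \geq C$, whence $g/h$ is algebraic over $K$ and therefore lies in $K$ by lemma \ref{algrob}. Both proofs rest on the same two ingredients (the monomial construction and lemma \ref{algrob}), just traversed in opposite directions.
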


\begin{proof}
Take $g,h \in \calR^{\phi=\lambda}$, and $m \geq 1$. The $m+1$ functions $\{ g^i h^{m-i} \}_{0 \leq i \leq m}$ all belong to $\calR^{\phi=\lambda^m}$. If $m \geq C$, they are linearly dependent over $K$. Hence $g/h$ is algebraic over $K$ in $\Frac \calR$. Therefore, $g/h \in K$ by lemma \ref{algrob}.
\end{proof}

We finish this section with some additional motivation for conjecture \ref{fraconj}. Suppose that $s(X) \in X \cdot \OO_K \dcroc{X}$ with $s'(0) \neq 0$, and that $u(X) \in X \cdot \OO_K \dcroc{X}$ is such that $u \circ s = s \circ u$. Let $\tilde{u} \in \calR^+$ be the Lie logarithm of $u$, as defined in \S 4 of \cite{L94}. We have (lemma 4.4.2 of ibid.) $\tilde{u} \circ s = s' \cdot \tilde{u}$. Hence $\tilde{u}$ is an eigenvector of $\phi$ for the eigenvalue $s'$. If $v(X)  \in X \cdot \OO_K \dcroc{X}$ is another series such that $v \circ s = s \circ v$, then $\tilde{u}$ and $\tilde{v}$ are both eigenvectors of $\phi$ for the eigenvalue $s'$, and conjecture \ref{fraconj} (which holds, by theorem \ref{fracphi}, if $s(X) \in X \cdot \OO_K \dcroc{X}$) along with proposition \ref{conjequiv} then implies that $\tilde{v} = c \cdot \tilde{u}$ for some $c \in K$. We can  use this to show that $u$ and $v$ commute with each other for composition. If $s(X) \in X \cdot \OO_K \dcroc{X}$ with $s'(0) \neq 0$, there is a much simpler proof of this, but knowing conjecture \ref{fraconj} in greater generality would allow us to prove similar results for $p$-adic dynamical systems on annuli, where they are currently not known.

\section{Substitutions of finite height}
\label{subfinht}

In this section (and in the rest of this article), we assume that $s(X)$ is of finite height, namely that it belongs to $\OO_K \dcroc{X}$, and that $s(0)=0$. Recall that $\wideg(s) = d$ is finite and that $d \geq 2$. In other words, $s(X) = \sum_{k \geq 1} s_k X^k$ with $s_1,\hdots,s_{d-1} \in \MM_K$ and $s_d \in \OO_K^\times$. 

\begin{rema}
\label{zercent}
If $s(X)\in \OO_K \dcroc{X}$ and $\wideg(s) = d \geq 2$,  there exists $a \in \MM_K$ such that $s(a) = a $, so that $s(X)$ is conjugate to the power series $s_a(X) = s(X+a)-a$ which is such that $\wideg(s_a) = d$ and $s_a(X)=0$. Hence the condition that $s(0)=0$ can be achieved by a simple conjugation. 
\end{rema}

\begin{proof}
Let $\val(\cdot) = -  \log_p |\cdot|$. Since $\wideg(s) \geq 2$, the Newton polygon of $s(X)-X$ starts with a segment of length $1$ and slope $-\val(s(0))$, which gives us such an $a$ with $\val(a)=\val(s(0))$.
\end{proof}

Recall that if $r < 1$, $D(r)$ is the closed disk of radius $r$ and $C(r)$ is the circle of radius $r$. Define a function $\lambda : [0;1] \to [0;1]$ by $\lambda(r) = \max_k |s_k| r^k$. 

\begin{lemm}
\label{lambehv}
We have $\lambda(r) = r^d$ if $r$ is close enough to $1$, $\lambda(r) < r$ for all $r>0$, and $\lambda(r) \leq |\pi|r$ if $r$ is close enough to $0$.
\end{lemm}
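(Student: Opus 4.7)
The plan is to exploit the three-tier structure on the coefficients dictated by the finite-height hypothesis, namely $|s_k|\leq|\pi|$ for $1\leq k\leq d-1$, $|s_d|=1$ since $s_d\in\OO_K^\times$, and $|s_k|\leq 1$ for $k\geq d$, and then to compare each term $|s_k|r^k$ in the maximum defining $\lambda(r)$ against the target upper bound in each of the three regimes of $r$.

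For the first claim, I would show that as $r\to 1^-$ the maximum in $\lambda(r)$ is achieved at $k=d$. Indeed the $k=d$ term contributes exactly $r^d$. For $k>d$ and $r<1$, one has $|s_k|r^k\leq r^k\leq r^{d+1}<r^d$. For the finitely many indices $1\leq k<d$, the inequality $|s_k|r^k\leq r^d$ is equivalent to $|s_k|\leq r^{d-k}$; since $|s_k|<1$ is fixed and $r^{d-k}\to 1$, each of these finitely many conditions holds once $r$ is sufficiently close to $1$, so I can take the maximum of the $d-1$ corresponding thresholds.

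For the second claim (understood for $r\in(0,1)$, the range in which the other two assertions are used), I would split at $k=1$ versus $k\geq 2$: the first gives $|s_1|r\leq|\pi|r<r$, and the second gives $|s_k|r^k\leq r^k\leq r^2<r$. Taking the maximum over all $k\geq 1$ still yields $\lambda(r)<r$. For the third claim, I would take $r\leq|\pi|$. Then for $k=1$ the bound $|s_1|r\leq|\pi|r$ holds directly, while for $k\geq 2$ one has $|s_k|r^k\leq r^k\leq r^2\leq|\pi|r$, so the maximum is again $\leq|\pi|r$.

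There is no serious obstacle: the only thing to check is that the threshold for ``close enough to $1$'' in (1) can be chosen uniformly in $k$, which is automatic because only finitely many indices $k<d$ need to be controlled.
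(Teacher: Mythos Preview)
Your proof is correct and simply spells out in detail what the paper leaves implicit: the paper's own proof consists of the single sentence ``These all follow easily from the formula $\lambda(r) = \max_k |s_k| r^k$.'' Your case split according to $k<d$, $k=d$, $k>d$ (and the observation that only finitely many indices $k<d$ need a threshold in $r$) is exactly the routine verification the paper omits, so there is no genuine methodological difference.
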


\begin{proof}
These all follow easily from the formula $\lambda(r) = \max_k |s_k| r^k$. 
\end{proof}

\begin{prop}
\label{fsurj}
If $r<1$, then $s(D(r)) = D(\lambda(r))$ and $s(C(r))$ contains $C(\lambda(r))$.
\end{prop}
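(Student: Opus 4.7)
My plan is to split the claim into three steps: the easy inclusion on the disk, the reverse inclusion via Weierstrass preparation, and then the circle statement as an automatic consequence.

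The inclusion $s(D(r)) \subseteq D(\lambda(r))$ is immediate from the ultrametric inequality: for $z$ with $|z| \leq r$, the series $\sum_{k \geq 1} s_k z^k$ converges in $\Cp$ and
\[
|s(z)| \leq \max_{k \geq 1} |s_k| |z|^k \leq \max_{k \geq 1} |s_k| r^k = \lambda(r).
\]
For the reverse inclusion, given $w \in \Cp$ with $|w| \leq \lambda(r)$, I would study $f(X) = s(X) - w \in \calR^{[0;r]}$. Its norm satisfies $|f|_r = \max(|w|, \lambda(r)) = \lambda(r)$, and since $s_0 = 0$ this maximum is attained at some index $k \geq 1$ coming from the definition of $\lambda$. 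Invoking Weierstrass preparation in $\calR^{[0;r]}$ (in the form established by Lazard in \cite{L62}), I would factor $f = P \cdot U$ with $U$ a unit of $\calR^{[0;r]}$ and $P$ a polynomial whose degree equals the largest index where $|f|_r$ is attained, hence $\geq 1$. All roots of $P$ lie in $D(r)$, and any one of them is a solution $z$ to $s(z) = w$.

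The circle statement $s(C(r)) \supseteq C(\lambda(r))$ then falls out automatically: given $w$ with $|w| = \lambda(r)$, the previous step supplies $z \in D(r)$ with $s(z) = w$, and necessarily $|z| = r$. Indeed $\lambda$ is strictly increasing on $(0,1)$, since each $|s_k| r^k$ for $k \geq 1$ is non-decreasing in $r$ and the term $|s_d| r^d = r^d$ is strictly increasing; so if $|z| < r$, the ultrametric inequality applied at radius $|z|$ would give $|w| = |s(z)| \leq \lambda(|z|) < \lambda(r) = |w|$, a contradiction. The main technical point throughout is the application of Weierstrass preparation to produce a root in $D(r)$, which rests on the fact that $|f|_r$ is attained at a strictly positive index — a consequence of $s_0 = 0$ together with $|w| \leq \lambda(r)$ — so no further work beyond Lazard's theory is required.
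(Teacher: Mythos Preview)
Your proof is correct and proceeds in a direction essentially dual to the paper's. The paper establishes the circle statement directly by a Newton polygon argument: given $z \in C(\lambda(r))$, one writes $\val(z) = \lambda^*(v) = \min_k(\val(s_k)+kv)$ with $r=p^{-v}$, picks an index $j$ realizing the minimum, and observes that the line through $(0,\val(z))$ and $(j,\val(s_j))$ has slope $-v$ and supports the Newton polygon of $s(X)-z$, forcing a root of valuation exactly $v$. The disk equality then follows (implicitly) by varying $r$. You go the other way around: Weierstrass preparation in $\calR^{[0;r]}$ yields surjectivity onto $D(\lambda(r))$, and the circle statement is recovered from the strict monotonicity of $\lambda$. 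Your route packages the analytic input into a single citation of Lazard's preparation theorem; the paper's argument is more bare-handed, appealing only to the elementary geometry of the Newton polygon, and has the small bonus of pinning down the valuation of the root directly.

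One minor point: your justification of strict monotonicity (``each $|s_k|r^k$ is non-decreasing and $|s_d|r^d=r^d$ is strictly increasing'') is not quite enough as written, since the maximum of a family of non-decreasing functions, one of which happens to be strictly increasing, need not be strictly increasing. What makes it work here is that every \emph{nonzero} term $|s_k|r^k$ with $k\geq 1$ is itself strictly increasing on $(0,1)$, so whatever index realizes $\lambda(r)$ at a given point already gives the strict inequality. This is a one-line fix and does not affect the argument.
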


\begin{proof}
That $s(D(r)) \subset D(\lambda(r))$ follows from the definition $\lambda(r) = \max_k |s_k| r^k$. For the second assertion, it is better to use valuations. Let $\val(\cdot) = -  \log_p |\cdot|$. Define $\lambda^*(v) = \min_k \val(s_k) + k v$. If $\val(z) = \lambda^*(v)$, choose an index $j$ such that $\val(z)= \val(s_j) + jv$. The line with equation $y=\val(s_j) +v \cdot (j-x)$ passes through $(0,\val(z))$, lies below the Newton polygon of $s$, and touches it at the point $(j,\val(s_j))$. Hence the equation $s(X)-z$ has a root of valuation $v$.
\end{proof}

\begin{coro}
\label{supfrob}
If $h(X) \in \calR^+$, then $|\phi(h)|_r = |h|_{\lambda(r)}$ for all $r<1$.
\end{coro}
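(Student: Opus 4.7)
The strategy is geometric: for $r$ in a suitable dense subset of $(0,1)$, I identify $|\phi(h)|_r$ with the supremum of $|h(s(z))|$ over the circle $C(r) \subset \Cp$ and then apply proposition \ref{fsurj}. Since $s \in X \cdot \OO_K \dcroc{X}$, composition with $s$ sends $\calR^+$ into itself, so $\phi(h) \in \calR^+$ and $\phi(h)(z) = h(s(z))$ for every $z$ in the open unit disk. Both $r \mapsto |\phi(h)|_r$ and $r \mapsto |h|_{\lambda(r)}$ are continuous in $r$ (Gauss norms on $\calR^+$ are log-convex, hence continuous, and $\lambda$ is continuous), so it suffices to prove the equality on the dense set of $r \in (0,1)$ for which both $r$ and $\lambda(r)$ lie in $|\Cp^\times|$.

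For such an $r$, the standard interpretation of the Gauss norm gives $|\phi(h)|_r = \sup_{z \in C(r)} |h(s(z))|$. For the inequality $|\phi(h)|_r \leq |h|_{\lambda(r)}$, I use the containment $s(C(r)) \subseteq s(D(r)) = D(\lambda(r))$ from proposition \ref{fsurj}, together with the maximum modulus principle applied to $h \in \calR^+$ on $D(\lambda(r))$, which gives $\sup_{w \in D(\lambda(r))} |h(w)| = |h|_{\lambda(r)}$. For the reverse inequality, proposition \ref{fsurj} gives $s(C(r)) \supseteq C(\lambda(r))$, so every $w \in C(\lambda(r))$ is of the form $s(z)$ for some $z \in C(r)$, and hence
\[ \sup_{z \in C(r)} |h(s(z))| \;\geq\; \sup_{w \in C(\lambda(r))} |h(w)| \;=\; |h|_{\lambda(r)}. \]

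There is no substantial obstacle: once the geometric interpretation of $|\cdot|_r$ on $\calR^+$ (maximum modulus on closed disks and circles) is in place, both inequalities are immediate consequences of proposition \ref{fsurj}. The only minor point requiring care is restricting to $r$ with $r, \lambda(r) \in |\Cp^\times|$ so that the relevant circles are non-empty, which is addressed by the density plus continuity argument above.
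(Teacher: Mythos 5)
Your proof is correct, and it is essentially the argument the paper intends: Corollary \ref{supfrob} is stated with no written proof precisely because it is meant to follow from the geometric content of Proposition \ref{fsurj} together with the interpretation of $|\cdot|_r$ as a sup over $C(r)$ (recalled in \S\ref{seclaz} for $r \in |\Cp|$). Two small remarks. First, the density step is even easier than you suggest: if $r \in |\Cp^\times|$ then $\lambda(r) = \max_k |s_k|r^k$ is attained at some $k$ and $|s_k|, r \in p^{\mathbf{Q}}$, so $\lambda(r) \in |\Cp^\times|$ automatically; you only need $r \in |\Cp^\times|$. Second, you could dispense with the circle containment $s(C(r)) \supseteq C(\lambda(r))$ and the continuity argument altogether by working with closed disks throughout: for any $h \in \calR^+$ and any $0 < r < 1$ one has $|h|_r = \sup_{z \in D(r)} |h(z)|$ (by monotonicity of $\rho \mapsto |h|_\rho$ and density of $|\Cp^\times|$), and then $|\phi(h)|_r = \sup_{z \in D(r)} |h(s(z))| = \sup_{w \in s(D(r))} |h(w)| = \sup_{w \in D(\lambda(r))} |h(w)| = |h|_{\lambda(r)}$ by the equality $s(D(r)) = D(\lambda(r))$ alone. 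Either way, the argument is sound.
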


\begin{prop}
\label{rpinv}
We have $(\calR^+)^{\phi=1} = K$.
\end{prop}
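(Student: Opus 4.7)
The natural approach is to work functionally rather than through the norms. An element $h \in \calR^+$ defines an analytic, hence continuous, function on the open unit disk $D$, and by Proposition \ref{fsurj} we have $s(D) \subseteq D$ because $\lambda(r) < r < 1$ for all $r \in (0,1)$. Formal substitution then gives $\phi(h)(z) = h(s(z))$ for $z \in D$, so the hypothesis $\phi(h) = h$ translates into the functional equation
\[
h(z) = h(s(z)) \quad \text{for every } z \in D.
\]

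Iterating this identity yields $h(z) = h(s^n(z))$ for all $n \geq 1$. The key step is to show that the forward orbit $s^n(z)$ tends to $0$ for every $z \in D$. By Proposition \ref{fsurj} we have $|s(w)| \leq \lambda(|w|)$, and Lemma \ref{lambehv} gives $\lambda(r) < r$ for every $r > 0$. Thus $|s^n(z)|$ is a (weakly) decreasing sequence of nonnegative reals; let $t$ be its limit. Passing to the limit in $|s^{n+1}(z)| \leq \lambda(|s^n(z)|)$ and using continuity of $\lambda$ gives $t \leq \lambda(t)$, which combined with $\lambda(t) < t$ for $t > 0$ forces $t = 0$.

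Finally, since $h$ is continuous at $0$,
\[
h(z) = \lim_{n \to \infty} h(s^n(z)) = h(0),
\]
so $h$ is the constant function $h(0) \in K$. The converse inclusion $K \subseteq (\calR^+)^{\phi=1}$ is trivial. I expect no real obstacle here: the only subtlety is justifying $s^n(z) \to 0$, but this is immediate from the strict inequality $\lambda(r) < r$ that was already recorded in Lemma \ref{lambehv}. Note that one cannot simply iterate $|h|_r = |h|_{\lambda(r)}$ (a consequence of Corollary \ref{supfrob}) to conclude $|h|_r = |h_0|$ and infer $h$ constant, since for instance $|1 + \pi X|_r = 1$ for all $r<1$ without $1+\pi X$ being constant; this is why the functional formulation is needed.
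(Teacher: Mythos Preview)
Your proof is correct and follows essentially the same approach as the paper's: both use the functional equation $h(z)=h(s(z))$ on $D$, iterate, and conclude from $s^{\circ n}(z)\to 0$ and continuity that $h$ is constant. You supply more detail than the paper does (which simply asserts $s^{\circ n}(z)\to 0$), and your closing remark explaining why the norm identity $|h|_r=|h|_{\lambda(r)}$ alone is insufficient is a helpful addition.
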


\begin{proof}
Take $g \in \calR^+$ such that $\phi(g)=g$. We have $g(z) = g(s(z))$ for all $z \in D$. Since $s^{\circ n}(z) \to 0$ as $n \to +\infty$, we have $g(z)=g(0)$ for all $z \in D$ and hence $g \in K$.
\end{proof}

\begin{prop}
\label{frpinvmu}
If $\mu \in K$, and $f \in (\Frac \calR^+)^{\phi=\mu}$, then $f^{\pm 1} \in \calR^+$.
\end{prop}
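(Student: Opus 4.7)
The plan is to write $f = g/h$ with $g, h \in \calR^+$ having no common zeros in $D$, show that all zeros of $g$ and of $h$ must be concentrated at $0$, and then use the shape of $s$ near $0$ to force those multiplicities to be zero. Such a coprime representation exists: any $f \in \Frac \calR^+$ can be written $g/h$ with $g,h \in \calR^+$ whose zero sets in $D$ are disjoint, by restricting to some $\calR^{[0;r]}$ (a PID by the discussion preceding lemma \ref{invnozer}), canceling a gcd, and returning to $\calR^+$ via lemma \ref{merholom}.

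The equation $\phi(f) = \mu f$ rewrites as $h \cdot \phi(g) = \mu g \cdot \phi(h)$. Coprimality forces $h \mid \phi(h)$ in $\calR^+$, so every zero $z_0 \in D$ of $h$ is a zero of $\phi(h) = h \circ s$; that is, $s(z_0)$ is again a zero of $h$. By lemma \ref{lambehv}, $|s(z)| \leq \lambda(|z|) < |z|$ whenever $z \neq 0$, so iteration produces an infinite sequence of distinct zeros $z_0, s(z_0), s^{\circ 2}(z_0), \ldots$ of $h$ converging to $0$. But a nonzero element of $\calR^+$, viewed inside the PID $\calR^{[0;r]}$, has only finitely many zeros in $D(r)$ for each $r<1$. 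This forces $z_0 = 0$: thus $h$ has no zeros on $D \setminus \{0\}$, and by symmetry neither does $g$.

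Now write $h = X^m h_0$ and $g = X^n g_0$ with $h_0, g_0 \in \calR^+$ nowhere-vanishing on $D$, hence units by lemma \ref{invnozer}; coprimality gives $\min(m,n) = 0$. Substituting into $\phi(f) = \mu f$ and canceling the common factor $X^{n-m}$ yields
\[ \left( \frac{s(X)}{X} \right)^{n-m} = \mu \cdot \frac{g_0 \cdot \phi(h_0)}{h_0 \cdot \phi(g_0)}. \]
The right-hand side lies in $(\calR^+)^\times$, since $\phi$ preserves $(\calR^+)^\times$: $s$ sends $D$ into $D$, so if $u$ has no zeros on $D$ then neither does $u \circ s$. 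On the other hand $s(X)/X \in \OO_K \dcroc{X}$ has Weierstrass degree $d-1 \geq 1$, so by Weierstrass preparation it has at least one zero in $D$ and is \emph{not} a unit of $\calR^+$. Hence $(s(X)/X)^{n-m}$ is a unit only if $n = m$, and combined with $\min(m,n) = 0$ this yields $m = n = 0$. Therefore $f = g_0/h_0 \in (\calR^+)^\times$, and in particular $f^{\pm 1} \in \calR^+$.

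The step I expect to require the most care is the coprime-representation bookkeeping — producing $g, h \in \calR^+$ with disjoint zero divisors for $f$, factoring out the powers of $X$ at the origin cleanly, and justifying that the resulting $h_0, g_0$ really lie in $\calR^+$ and really are units there. The remaining dynamical argument (iterated zeros contracting to $0$) and the structural observation that $s(X)/X$ is never a unit of $\calR^+$ are both routine given the earlier lemmas of the paper.
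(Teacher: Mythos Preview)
There is a genuine gap in the orbit argument. From $h \mid \phi(h)$ you correctly deduce that if $z_0$ is a zero of $h$ then $s(z_0)$ is again a zero of $h$. But the claim that the iterates $z_0, s(z_0), s^{\circ 2}(z_0), \ldots$ are \emph{distinct} is false in general: the inequality $|s(z)| < |z|$ holds only for $z \neq 0$, and nothing prevents the orbit from reaching $0$ in finitely many steps and staying there. The correct conclusion (as in the paper's proof) is only that the orbit must eventually hit $0$, hence $h(0)=0$ whenever $h$ has any zero at all. In particular you \emph{cannot} conclude that all zeros of $g$ and $h$ lie at the origin.

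This matters: your final conclusion $f \in (\calR^+)^\times$ is strictly stronger than the statement being proved, and it is false. Take $s'(0)\neq 0$, $\mu = s'(0)$, and $f = \log_s \in \calR^+$; then $\phi(f)=\mu f$, but $f$ vanishes at $0$ (and, in the cyclotomic case $s(X)=(1+X)^p-1$, at every $\zeta_{p^n}-1$ as well), so $f$ is not a unit. In this example $g=\log_s$, $h=1$ are coprime, and the nonzero zero $z_0 = \zeta_p - 1$ of $g$ satisfies $s(z_0)=0$: the orbit lands at the origin after one step, so no contradiction arises, and $g$ genuinely has zeros away from $0$. Your displayed identity for $(s(X)/X)^{n-m}$ is therefore built on a false premise. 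The fix is to use the corrected orbit conclusion together with coprimality: at most one of $g,h$ can vanish at $0$, hence at most one of them has any zero at all, so the other is a unit in $\calR^+$ and $f$ or $f^{-1}$ lies in $\calR^+$. This is exactly the paper's route.
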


\begin{proof}
Write $f=g/h \in (\Frac \calR^+)^{\phi=\mu}$. Let $z$ be a nonzero zero (or pole) of $g/h$. We have $\mu \cdot (g/h)(z) = \phi(g/h)(z) = (g/h)(s(z))$ so that $s(z)$ is itself a zero (or pole) of $g/h$. Likewise $s^{\circ n}(z)$ is a zero (or pole) of $g/h$ for all $n \geq 0$. Since $|s(x)| < |x|$ if $x \neq 0$, and since the zeroes and poles of $g/h$ cannot accumulate towards $0$, we must have $s^{\circ n}(z) = 0$ for $n \gg 0$. Therefore $0$ is a zero (or pole) of $g/h$. Consequently, either $0$ is a zero of $g/h$ and $g/h$ only has zeroes, or $0$ is a pole of $g/h$ and $g/h$ only has poles. By lemma \ref{merholom}, either $g/h$ or $h/g$ belongs to $\calR^+$. 
\end{proof}

\begin{coro}
\label{frpinv}
We have $(\Frac \calR^+)^{\phi=1} = K$.
\end{coro}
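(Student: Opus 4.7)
The plan is to combine the two immediately preceding results in a one-line argument, with no new input needed. Given $f \in (\Frac \calR^+)^{\phi=1}$, I would apply proposition \ref{frpinvmu} with $\mu = 1$, which tells us that $f^{\pm 1} \in \calR^+$. So either $f$ itself or its inverse $f^{-1}$ belongs to $\calR^+$.

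In either case, the element that lies in $\calR^+$ is still $\phi$-invariant: if $f \in \calR^+$ then $f \in (\calR^+)^{\phi=1}$ directly; if $f^{-1} \in \calR^+$ then, since $\phi(f)=f$ implies $\phi(f^{-1}) = f^{-1}$, we have $f^{-1} \in (\calR^+)^{\phi=1}$. Now I invoke proposition \ref{rpinv}, which asserts $(\calR^+)^{\phi=1} = K$. Hence either $f \in K$ or $f^{-1} \in K$; either way $f \in K$ (noting that $0$ is not in $(\Frac \calR^+)^{\phi=1} \setminus K$ is a non-issue: $0 \in K$).

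The only thing that needs to be said explicitly is that $\phi$ extends naturally from $\calR^+$ to $\Frac \calR^+$ and commutes with inversion, so $\phi(f) = f$ implies $\phi(f^{-1}) = f^{-1}$. There is no genuine obstacle here; the corollary is really just the conjunction of propositions \ref{frpinvmu} and \ref{rpinv}.
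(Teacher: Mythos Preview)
Your proof is correct and matches the paper's own proof exactly: the paper simply states that the corollary follows from propositions \ref{frpinvmu} and \ref{rpinv}, which is precisely the combination you spell out.
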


\begin{proof}
This follows from propositions \ref{frpinvmu} and \ref{rpinv}.
\end{proof}

\begin{rema}
\label{subrzo}
We have $\phi(\calR^+) \subset \calR^+$ and $\phi(\calR^{[r;1[}) \subset \calR^{[r^{1/d};1[}$ if $r \geq r_s$.
\begin{enumerate}
\item If the only zero of $s$ in $D$ is $0$, then $\phi(\calR^{[r;1[}) \subset \calR^{[r^{1/d};1[}$ for all $0 \leq r <1$. 
\item It is not true in general that $\phi$ preserves $\calR^{]0;1[}$. For example, $1/X \in \calR^{]0;1[}$ but $\phi(1/X) = 1/s(X)$, and that series belongs to $\calR^{[r;1[}$ only if $r$ is larger than the norm of all the zeroes of $s(X)$. See \S \ref{secforced} for a precise conjecture regarding this.
\end{enumerate}
\end{rema}

\begin{proof}
We prove (1). An element of $\calR^{[r;1[}$ is the sum of an element of $\calR^+$ and of $\sum_{n \geq 1} h_n / X^n$ where $|h_n|r^{-n} \to 0$. We have $s(X)=s_d X^d \cdot u(X)$ with $u(X) \in 1+X \OO_K \dcroc{X}$ and $s_d \in \OO_K^\times$. The claim now follows since $1/s(X)^n = 1/X^{nd} \cdot u(X)^{-n}$ and $u(X)^{-n} \in 1+X \OO_K \dcroc{X}$, and since $\phi(\calR^+) \subset \calR^+$. 
\end{proof}

\begin{prop}
\label{prodphi}
If $a(X) \in \calE^+$ and $a(0)=1$, the product $\prod_{i=0}^\infty a(s^{\circ i}(X))$ converges in $\calR^+$ to an element $m_a(X) \in \calR^+$ such that $\phi(m_a) \cdot a = m_a$.

If in addition $a(X) \in 1+X \cdot \OO_K \dcroc{X}$, then $m_a(X) \in 1+X \cdot \OO_K \dcroc{X}$ as well.
\end{prop}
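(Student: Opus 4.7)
The plan is to write $a(X) = 1 + X \cdot c(X)$ with $c \in \calE^+$ (possible since $a(0)=1$, and dividing by $X$ preserves $\calE^+$). Each factor then becomes $a(s^{\circ i}(X)) = 1 + u_i(X)$ with $u_i = s^{\circ i} \cdot (c \circ s^{\circ i})$, and the goal is to show $|u_i|_r \to 0$ for every $r<1$, which will yield convergence of the infinite product in the Fr\'echet space $\calR^+$.

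For the estimate, iterating corollary \ref{supfrob} gives $|\phi^i(h)|_r = |h|_{\lambda^{\circ i}(r)}$ for any $h \in \calR^+$, so $|s^{\circ i}|_r = \lambda^{\circ i}(r)$ and $|c \circ s^{\circ i}|_r = |c|_{\lambda^{\circ i}(r)}$. Lemma \ref{lambehv} gives $\lambda(r)<r$, whence $\lambda^{\circ i}(r) \to 0$ and $|c|_{\lambda^{\circ i}(r)} \to |c(0)|$ stays bounded; therefore $|u_i|_r \to 0$. Standard non-archimedean bookkeeping then shows that the partial products $P_n = \prod_{i=0}^n (1+u_i)$ satisfy $|P_n|_r \leq \prod_i \max(1,|u_i|_r)$, which is finite since all but finitely many factors are $1$, and $|P_{n+1} - P_n|_r = |P_n|_r \cdot |u_{n+1}|_r \to 0$. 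So $(P_n)$ is Cauchy in every seminorm $|\cdot|_r$ with $r<1$, and completeness of $\calR^+$ produces the limit $m_a \in \calR^+$.

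For the functional equation, $\phi$ is continuous on $\calR^+$ (again by corollary \ref{supfrob}) and multiplicative, so
\[ \phi(m_a) = \lim_n \phi(P_n) = \lim_n \prod_{i=0}^n a \circ s^{\circ(i+1)} = \prod_{j \geq 1} a \circ s^{\circ j} = m_a / a, \]
which gives $\phi(m_a) \cdot a = m_a$. Finally, if $a \in 1 + X \OO_K \dcroc{X}$, then since $s^{\circ i} \in X \OO_K \dcroc{X}$ each factor $a \circ s^{\circ i}$ lies in $1 + X \OO_K \dcroc{X}$, hence so does each $P_n$; and the convergence $P_n \to m_a$ in $|\cdot|_r$ forces each coefficient of $m_a$ to be a $K$-limit of elements of $\OO_K$, hence to lie in $\OO_K$, while $m_a(0) = \lim P_n(0) = 1$. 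The only real obstacle is the convergence estimate of the second paragraph, which reduces to the contractivity $\lambda(r)<r$ established in lemma \ref{lambehv}.
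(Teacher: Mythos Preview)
Your argument is correct and follows essentially the same approach as the paper's proof, which is extremely terse: the paper simply invokes lemma \ref{frechrp} and states that $|a(s^{\circ i}(X))-1|_r \to 0$ for each $r<1$, leaving the reader to fill in exactly the details you supply (the factorization $a=1+Xc$, the iteration of corollary \ref{supfrob} together with the contractivity $\lambda(r)<r$ from lemma \ref{lambehv}, and the Cauchy estimate for the partial products). Your treatment of the second claim via coefficient-wise convergence is likewise a correct unpacking of the paper's one-line remark that each factor lies in $1+X\cdot\OO_K\dcroc{X}$.
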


\begin{proof}
The second claim follows from the first, since $a(s^{\circ i}(X)) \in 1+X \cdot \OO_K \dcroc{X}$ for all $i$ in this case. The first claim follows from lemma \ref{frechrp} and the fact that for a given $r<1$, we have $|a(s^{\circ i}(X))-1|_r \to 0$ as $i \to + \infty$.
\end{proof}

\begin{rema}
\label{k45}
Compare with remark 4.5 of \cite{KFM}.
\end{rema}

\section{The operator $\psi$}
\label{secpsi}

We have $\OO_{\calE}/ \pi \OO_{\calE} = k \dpar{X}$. Since $\wideg(s) = d$, $k \dpar{X}$ is a free $k \dpar{s(X)}$-vector space of dimension $d$. Hence $\OO_{\calE}$ is a free $\phi(\OO_{\calE})$-module of rank $d$, and we get a ``trace'' map $\psi : \OO_{\calE} \to \OO_{\calE}$ defined on $\OO_{\calE}$ by $\phi(\psi(h)) = \Tr_{\OO_{\calE} / \phi(\OO_{\calE})} h(X)$. This map extends to $\calE$. We have $\psi(1) = d$ and $\psi(f \cdot \phi(g)) = \psi(f) \cdot g$. 

\begin{rema}
\label{psivsf}
In $p$-adic Hodge theory, the operator $\psi$ is usually defined as either $1/d$ or $1/\pi$ times our $\psi$ defined above. See for instance \S I.2 of \cite{CSP}.
\end{rema}

In the rest of this section, we assume that $s(X)$ is of finite height. The ring $k \dcroc{X}$ is a free $k \dcroc{s(X)}$-module of rank $d$. Hence $\OO_K \dcroc{X}$ is a free $\phi(\OO_K \dcroc{X})$-module of rank $d$, and if $h(X) \in \OO_K \dcroc{X}$, then $\phi(\psi(h)) = \Tr_{\OO_K \dcroc{X} / \phi(\OO_K \dcroc{X})} h(X)$. This shows that $\psi$ preserves $\calE^+$. We next study the restriction of $\psi$ to $\calE^\dag$.

\begin{lemm}
\label{pclem}
If $0< r <1$ and $n \geq 1$, then $\psi(1/X^n) \in \calE^{[\lambda(r);1[}$, and $|\psi(1/X^n)|_{\lambda(r)} \leq r^{1-n}/\lambda(r)$.
\end{lemm}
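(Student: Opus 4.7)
The plan is to use the explicit factorization $s(X) = s_d X^d u(X)$ from section \ref{subfinht}, where $s_d \in \OO_K^\times$ and $u \in 1 + X \cdot \OO_K\dcroc{X}$, together with the identity $\psi(f \cdot \phi(g)) = \psi(f) \cdot g$ recalled at the start of section \ref{secpsi}. Given $n \geq 1$, I would write $n = dk - j$ with $k \geq 1$ and $0 \leq j \leq d-1$ (so $k$ is minimal with $dk \geq n$), and observe that
\[
\frac{1}{X^n} \;=\; \frac{X^j}{X^{dk}} \;=\; X^j \cdot s_d^k \cdot u(X)^k \cdot \phi\!\left(\frac{1}{X^k}\right),
\]
since $\phi(1/X^k) = 1/s(X)^k = 1/(s_d^k X^{dk} u(X)^k)$. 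The point of this decomposition is that it splits $X^{-n}$ into a factor in $\OO_K\dcroc{X}$ times an explicit $\phi$-image.

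Applying $\psi$ and invoking semilinearity yields $\psi(1/X^n) = s_d^k \cdot \psi(X^j u^k) \cdot 1/X^k$. Since $X^j u^k \in \OO_K\dcroc{X}$ and $\psi$ preserves $\OO_K\dcroc{X}$ (as explained at the start of section \ref{secpsi}, using that the trace lands in $\phi(\OO_K\dcroc{X})$), the factor $\psi(X^j u^k)$ lies in $\OO_K\dcroc{X} \subset \calE^{[\lambda(r);1[}$; and $1/X^k$ lies in $\calE^{[\lambda(r);1[}$ because $\lambda(r) > 0$. This establishes the first assertion $\psi(1/X^n) \in \calE^{[\lambda(r);1[}$.

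For the norm estimate I would combine three facts: $|s_d|=1$; $|\psi(X^j u^k)|_{\lambda(r)} \leq 1$, since the coefficients are in $\OO_K$ and $\lambda(r) < 1$ by lemma \ref{lambehv}; and $|1/X^k|_{\lambda(r)} = \lambda(r)^{-k}$. The key elementary inequality is $\lambda(r) \geq |s_d|\,r^d = r^d$, which is immediate from the definition $\lambda(r) = \max_k |s_k| r^k$. This gives $\lambda(r)^{-(k-1)} \leq r^{-d(k-1)}$, and the bound $j \leq d-1$ forces $d(k-1) \leq dk - 1 - j = n-1$, so $r^{-d(k-1)} \leq r^{1-n}$. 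Multiplying these estimates together yields $|\psi(1/X^n)|_{\lambda(r)} \leq r^{1-n}/\lambda(r)$.

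I do not expect a serious obstacle here: once one spots the factorization $1/X^n = X^j s_d^k u^k \cdot \phi(1/X^k)$, which splits $X^{-n}$ into an integral part times a $\phi$-image, the identity for $\psi$ does all the structural work, and the inequality rests only on $\lambda(r) \geq r^d$ (i.e.\ on the integrality of the leading coefficient $s_d$). The one subtlety is choosing $k$ minimal with $dk \geq n$; any other choice would produce either a non-integral integral factor or a weaker power of $r$ in the estimate.
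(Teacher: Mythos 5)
Your factorization $s(X) = s_d X^d u(X)$ with $u \in 1 + X\cdot\OO_K\dcroc{X}$ does not hold for a general finite-height substitution: expanding it out forces $s_1 = \cdots = s_{d-1} = 0$, whereas \S\ref{subfinht} only assumes $s_1,\hdots,s_{d-1} \in \MM_K$. The paper invokes this factorization only in remark \ref{subrzo}(1), and there it is explicitly conditioned on ``the only zero of $s$ in $D$ is $0$'' -- a genuine restriction, since (for instance) $s(X)=(1+X)^p-1$ has the nonzero zeroes $\eps-1$. So the identity $1/X^n = X^j s_d^k u^k \cdot \phi(1/X^k)$, on which your whole argument rests, is false in the cases the lemma needs to cover. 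Concretely, since $s(X) = X q(X)$ with $q(0) = s_1 \in \MM_K$ not a unit, the quotient $s(X)^k/X^n = X^{k-n}q(X)^k$ is integral only for $k \geq n$, and with $k \geq n$ the resulting bound $|\psi(1/X^n)|_{\lambda(r)} \leq \lambda(r)^{-k}$ is far weaker than the claimed $r^{1-n}/\lambda(r)$.

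The paper's proof handles the general case differently: it uses $q(X)=s(X)/X \in \OO_K\dcroc{X}$ and the single step $\psi(1/X^n) = \psi(q^n/s^n) = \psi(q^n)/X^n$ to get membership in $\calE^+[1/X] \subset \calE^{[\lambda(r);1[}$, and then proves the norm bound by induction on $n$ using the convolution identity $\psi(1/X^n) = \sum_{j\geq 1} (s_j/X)\,\psi(1/X^{n-j})$ together with the key inequality $|s_j/X|_{\lambda(r)} \leq r^{-j}$, which follows from $\max_k|s_k|r^k = |s|_r = |X|_{\lambda(r)} = \lambda(r)$. That inequality packages the contribution of the small coefficients $s_1,\hdots,s_{d-1}$, which your approach has no way to see. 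If you want to keep a factorization-style argument, note that the problem is exactly that $q$ is a unit in $\OO_{\calE}$ but not in $\OO_K\dcroc{X}$; once you pass to $\OO_{\calE}$ you lose the positivity of exponents needed to conclude integrality, so some form of the paper's induction seems unavoidable.
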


\begin{proof}
Let $q(X) = s(X)/X = s_1 + s_2 X + \cdots \in \OO_K \dcroc{X}$. The formula
\[ \psi \left(\frac{1}{X^n}\right) = \psi \left(\frac{q(X)^n }{s(X)^n}\right) = \frac{\psi(q(X)^n) }{X^n} \]
shows that $\psi(1/X^n) \in \calE^+[1/X] \subset \calE^{[\lambda(r);1[}$. We now prove the bound on $|\psi(1/X^n)|_{\lambda(r)}$. 

By corollary \ref{supfrob}, we have $\max_k |s_k| r^k = |s|_r = |X|_{\lambda(r)}$. This implies that $|s_j / X|_{\lambda(r)} \leq r^{-j}$ for all $j \geq 1$. In addition, $|s/X \cdot \psi(X^k)|_{\lambda(r)} \leq 1/\lambda(r)$ for all $s \in \OO_K$ and $k \geq 0$.

We can write 
\[ \psi \left(\frac{1}{X^n}\right) =  \psi \left(\frac{q(X) }{s(X) X^{n-1}}\right) = \frac{1}{X} \psi \left(\frac{1}{X^{n-1}}(s_1+s_2X+\cdots)\right) = \sum_{j \geq 1} \frac{s_j}{X} \psi \left(\frac{1}{X^{n-j}}\right). \]

If $n=1$, this formula and the above observations imply that $|\psi(1/X)|_{\lambda(r)} \leq 1/\lambda(r)$. If $n \geq 2$ and $1 \leq j \leq n-1$, then by induction, we get
\[ \left|\frac{s_j}{X}  \psi\left(\frac{1}{X^{n-j}}\right)\right|_{\lambda(r)} \leq r^{-j} \cdot r^{1-n+j}/\lambda(r) \leq r^{1-n}/\lambda(r). \]
If $j \geq n$, then $|s_j/X \cdot \psi(X^{j-n})|_{\lambda(r)} \leq 1/\lambda(r) \leq r^{1-n}/\lambda(r)$. This implies the claim.
\end{proof}

\begin{rema}
\label{fxc}
Slightly different estimates for certain $r$ are proved in lemma I.9 of \cite{CSP} for $s(X)=(1+X)^p-1$ and in proposition 2.2 of \cite{FX} for $s(X)=\pi \cdot X + X^q$.
\end{rema}

\begin{prop}
\label{psiconv}
We have $\psi(\calE^{[r;1[}) \subset \calE^{[\lambda(r);1[}$ for $r<1$.
\end{prop}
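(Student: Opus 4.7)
The plan is to split $h \in \calE^{[r;1[}$ as $h = h^+ + h^-$, where $h^+ = \sum_{n \geq 0} h_n X^n \in \calE^+$ and $h^- = \sum_{n \geq 1} h_{-n} X^{-n}$; note that $h^-$ automatically lies in $\calR^{[r;1[}$ as the difference $h - h^+$. For the positive part, I would invoke the fact already recorded in the preceding paragraphs that $\psi$ preserves $\calE^+$ (because $\OO_K\dcroc{X}$ is free of rank $d$ over $\phi(\OO_K\dcroc{X})$), and observe that $\calE^+ \subset \calE^{[\lambda(r);1[}$. The actual work is in controlling $\psi(h^-)$.

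For $h^-$, the idea is to realize $\psi(h^-)$ as the convergent series $\sum_{n \geq 1} h_{-n} \psi(1/X^n)$ and to estimate its norm $|\cdot|_s$ at each $s \in [\lambda(r), 1)$ using lemma \ref{pclem}. To apply that lemma at a given $s$ I first need to write $s = \lambda(r')$ for some $r' \in [r, 1)$. The function $\lambda$ is continuous and non-decreasing on $(0,1)$, satisfies $\lambda(r') = (r')^d \to 1$ near $r'=1$ by lemma \ref{lambehv}, and takes the value $\lambda(r)$ at $r'=r$, so the intermediate value theorem supplies such an $r'$. Lemma \ref{pclem} then gives $|\psi(1/X^n)|_s \leq (r')^{1-n}/s$, and since $h^- \in \calR^{[r;1[}$ with $r' \geq r$ we have $|h_{-n}| (r')^{-n} \to 0$, so the series is Cauchy with respect to $|\cdot|_s$. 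As this holds for every $s \in [\lambda(r), 1)$, lemma \ref{frechrp} shows the series converges to some $g \in \calR^{[\lambda(r);1[}$.

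To conclude I would identify $g$ with $\psi(h^-)$ (for the $\psi$ already defined on $\calE$) and combine: $\psi(h) = \psi(h^+) + g$ lies in $\calE^+ + \calR^{[\lambda(r);1[} \subset \calR^{[\lambda(r);1[}$. Since $h \in \calE^{[r;1[} \subset \calE$ and $\psi$ extends to $\calE$, $\psi(h)$ automatically has bounded coefficients, and hence $\psi(h)$ belongs to $\calE \cap \calR^{[\lambda(r);1[} = \calE^{[\lambda(r);1[}$.

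The step I expect to be the main obstacle is matching the two descriptions of $\psi(h^-)$: the termwise series above, and the one produced by the abstract extension of $\psi$ to $\calE$. Cleanly reconciling them requires either a continuity statement for $\psi$ on $\calE$ with respect to the relevant norms, or bypassing the abstract extension entirely by using the explicit formula $\psi(1/X^n) = \psi(q(X)^n)/X^n$ from the proof of lemma \ref{pclem} together with the identity $\psi(f \cdot \phi(g)) = \psi(f) \cdot g$ to define $\psi$ directly on the negative part. The remaining ingredients (surjectivity of $\lambda$ onto $[\lambda(r), 1)$, and the identification of bounded-coefficient elements of $\calR^{[\lambda(r);1[}$ with $\calE^{[\lambda(r);1[}$) are routine.
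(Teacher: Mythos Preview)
Your proposal is correct and follows the same route as the paper: split $h$ into its positive part (handled by $\psi(\calE^+)\subset\calE^+$) and its principal part $\sum_{n\geq 1} h_{-n}X^{-n}$, then invoke lemma \ref{pclem} to show that $\sum_{n\geq 1} h_{-n}\,\psi(1/X^n)$ converges in $\calE^{[\lambda(r);1[}$. The paper's write-up is terser---it records only the estimate at the single radius $\lambda(r)$ and does not discuss your ``main obstacle''---but the argument is the same; your worry about matching the termwise sum with the abstractly defined $\psi(h^-)$ is settled by observing that the partial sums also converge $\pi$-adically in $\calE$ (since $h_{-n}\to 0$ and $\psi(\OO_{\calE})\subset\OO_{\calE}$), and both modes of convergence force coefficient-wise agreement.
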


\begin{proof}
An element of $\calE^{[r;1[}$ is the sum of an element of $\calE^+$ and of $\sum_{n \geq 1} h_n / X^n$ where $|h_n|r^{-n} \to 0$. We have $\psi(\calE^+) \subset \calE^+$, and $| \psi(h_n/X^n) |_{\lambda(r)} \to 0$ as $n \to + \infty$ by lemma \ref{pclem}. The claim follows.
\end{proof}

\begin{rema}
\label{phibet}
Since $\psi(\phi(h)) = d \cdot h$, we recover, when $s(X)$ is a Frobenius lift, the (unproved) corollary 5.3 of \cite{KFM}. 
\end{rema}

We now show that $\psi$ extends from $\calE^+$ to $\calR^+$. Recall (lemma \ref{frechrp}) that the family of norms $\{ |\cdot|_r\}_{r<1}$ defines a Fr\'echet structure on $\calE^+$, and that the completion of $\calE^+$ is $\calR^+$.

\begin{prop}
\label{psirp}
The map $\psi : \calE^+ \to \calE^+$ is uniformly continuous for the family of norms $\{ |\cdot|_r\}_{r<1}$, and extends to a map $\psi : \calR^+ \to \calR^+$.
\end{prop}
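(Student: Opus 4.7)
The plan is to prove the seminorm estimate $|\psi(h)|_r \leq |h|_{r^{1/d}}$ for every $h \in \calE^+$ and every $r < 1$ sufficiently close to $1$. Such an inequality will make $\psi$ continuous on $\calE^+$ for every seminorm in the Fr\'echet family: on $\calR^+$ one has $|h|_r \leq |h|_{r'}$ whenever $r \leq r' < 1$, so the Fr\'echet topology is already generated by the subfamily $\{|\cdot|_r\}_{r \geq r_0}$ for any fixed $r_0 < 1$. Density of $\calE^+$ in $\calR^+$ combined with completeness of $\calR^+$ will then yield the unique continuous extension $\psi : \calR^+ \to \calR^+$, still satisfying the same bound.

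The starting point is the defining identity $\phi(\psi(h)) = \Tr_{\OO_{\calE}/\phi(\OO_{\calE})}(h)$. Combining it with corollary \ref{supfrob} applied to $g = \psi(h)$ yields $|\psi(h)|_{\lambda(r_0)} = |\Tr(h)|_{r_0}$, so the question reduces to showing $|\Tr(h)|_{r_0} \leq |h|_{r_0}$ for $r_0 < 1$ close enough to $1$; the substitution $r = \lambda(r_0) = r_0^d$ (valid for such $r_0$ by lemma \ref{lambehv}) then rewrites this as the sought-after $|\psi(h)|_r \leq |h|_{r^{1/d}}$.

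To estimate $|\Tr(h)|_{r_0}$ I would use the geometric sum-over-fibers formula: since $\OO_K\dcroc{X}$ is free of rank $d$ over $\OO_K\dcroc{s(X)}$, the map $s : D \to D$ is a finite flat cover of degree $d$, and outside its (discrete) branch locus one has $\Tr(h)(w) = \sum_{j=1}^d h(w_j)$, where $w_1, \ldots, w_d$ are the $d$ preimages of $s(w)$ under $s$. A Newton polygon computation then shows that for $|w|=r_0$ with $r_0$ close enough to $1$, all the preimages $w_j$ share the same norm $r_0$: as soon as $\val(r_0) < \min_{1 \leq k < d} \val(s_k)/(d-k)$, the Newton polygon of $s(X) - s(w)$ reduces to the single segment from $(0, d\val(r_0))$ to $(d,0)$ of slope $-\val(r_0)$. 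Consequently $|h(w_j)| \leq |h|_{r_0}$ for each $j$, and taking the supremum over $w \in \Cp$ with $|w|=r_0 \in |\Cp|$ yields $|\Tr(h)|_{r_0} \leq |h|_{r_0}$.

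The main obstacle I expect is invoking the sum-over-fibers formula cleanly. It is standard for finite \'etale covers, amounting to the observation that at a generic target point the fiber decomposes as a product of residue fields on which multiplication by $h$ acts diagonally with eigenvalues $h(w_j)$. If one prefers a more pedestrian approach, one can instead work directly with the explicit $\OO_K\dcroc{s(X)}$-basis $1, X, \ldots, X^{d-1}$ of $\OO_K\dcroc{X}$, write out the multiplication-by-$h$ matrix, and bound its diagonal entries on the circle $|w|=r_0$ using the same Newton polygon estimate together with lemma \ref{pclem}-style computations.
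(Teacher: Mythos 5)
Your proof is correct and takes a genuinely different route from the paper. The paper proceeds algebraically: it writes $h$ in the $\phi(\calE^+)$-basis $1,X,\dots,X^{d-1}$ of $\calE^+$, observes that for $r$ close to $1$ the norm $|h|_r$ is read off coordinate-wise ($|h|_r=\max_{i,j}|h_{i,j}|r^{di+j}$), and computes $\psi(h)=\sum_{j=0}^{d-1}\psi(X^j)\sum_i h_{i,j}X^i$ directly, yielding $|\psi(h)|_{r^d}\leq C(r)\,|h|_r$ for some unspecified constant $C(r)$. You instead invoke the sum-over-fibers description of the algebraic trace, $\psi(h)(s(w))=\Tr(h)(w)=\sum_j h(w_j)$ over the fiber $s^{-1}(s(w))$, and control the fiber via the Newton polygon of $s(X)-s(w)$: once $\val(r_0)<\min_{1\leq k<d}\val(s_k)/(d-k)$, the polygon has a single negative-slope segment and all $d$ preimages share the norm $r_0$. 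This buys a sharper, constant-free bound $|\psi(h)|_{r^d}\leq|h|_r$, at the cost of invoking compatibility of the module trace with specialization (a standard but slightly less elementary input; one should either restrict to the dense set of non-branch circles, or note that at branch points the formula picks up integer multiplicities of norm $\leq 1$, so the estimate persists). The ``pedestrian alternative'' you sketch at the end, working directly with the basis $1,X,\dots,X^{d-1}$ and the multiplication matrix, is essentially the paper's argument.
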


\begin{proof}
We have $\OO_K \dcroc{X} = \oplus_{j=0}^{d-1} \OO_K \dcroc{s(X)} \cdot X^j$. If $h(X) \in \calE^+$, we can therefore write it as $h(X) = \sum_{i \geq 0} \sum_{j=0}^{d-1} h_{i,j} s(X)^i X^j$ with $\{ h_{i,j} \}$ a bounded sequence of $K$. 

By lemma \ref{lambehv}, there exists $r_0<1$ such that if $r_0 \leq r <1$, then $|s|_r = r^d$. In this case, $|h|_r = \max_{i,j} |h_{i,j}| r^{di+j}$. We then have $\psi(h) = \sum_{j=0}^{d-1} \psi(X^j) \sum_{i \geq 0} h_{i,j} X^i$. This implies that if $r \geq r_0$, there exists a constant $C(r)$ such that $|\psi(h)|_{r^d} \leq C(r) \cdot |h|_r$. The map $\psi$ is therefore uniformly continuous, and extends from $\calE^+$ to $\calR^+$.
\end{proof}

Since $\psi$ is defined on $\calE^\dagger$ and on $\calR^+$, it extends to $\psi : \calR \to \calR$, and we have $\psi(\calR^{[r;1[}) \subset \calR^{[\lambda(r);1[}$ by proposition \ref{psiconv}. We finish this section with a few results that are not used in the rest of the paper.

\begin{prop}
\label{dualpsi}
Let $e_1,\hdots,e_d$ be a basis of $\OO_K \dcroc{X}$ over $\OO_K \dcroc{s(X)}$.

There exists $\delta(X) \neq 0 \in \calE^+$ and $e_1^*,\hdots,e_d^* \in \delta(X)^{-1} \cdot \calE^+$ such that $\psi(e_i^* e_j) = \delta_{ij}$.
\end{prop}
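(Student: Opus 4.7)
The plan is to realize $(a,b) \mapsto \psi(ab)$ as essentially the trace form of the finite free extension $\OO_K \dcroc{X}/\OO_K \dcroc{s(X)}$, to take $(e_i^*)$ as the dual basis of $(e_j)$ with respect to this pairing, and then to keep track of the denominator that appears when inverting the Gram matrix. I set $M \in M_d(\OO_K \dcroc{X})$ by $M_{ij} = \psi(e_i e_j)$; the entries lie in $\OO_K \dcroc{X}$ because $\psi$ preserves $\OO_K \dcroc{X}$, as noted at the start of \S\ref{secpsi}. I then take $\delta(X) := \phi(\det M) \in \phi(\OO_K \dcroc{X}) \subset \calE^+$.

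The main point is to verify that $\delta \neq 0$. From $\phi \circ \psi = \Tr_{\OO_K \dcroc{X}/\OO_K \dcroc{s(X)}}$, applying $\phi$ entrywise to $M$ yields the Gram matrix $(\Tr(e_i e_j))_{i,j}$, so $\delta$ is precisely the discriminant of the basis $(e_i)$ for the trace form. Since $\OO_K \dcroc{X}$ is a domain that is free of rank $d$ over $\OO_K \dcroc{s(X)}$, the base change $\OO_K\dcroc{X} \otimes_{\OO_K\dcroc{s(X)}} \Frac \OO_K\dcroc{s(X)}$ is a $d$-dimensional algebra over $\Frac \OO_K\dcroc{s(X)}$ which is a domain, hence a field, and it coincides with $L := \Frac \OO_K\dcroc{X}$. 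Thus $L$ is a degree-$d$ extension of $L' := \Frac \OO_K\dcroc{s(X)}$; both have characteristic zero, so $L/L'$ is separable and its trace form is non-degenerate. This forces $\det(\Tr(e_i e_j)) \neq 0$, and hence $\delta \neq 0$.

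Granted this, $M$ is invertible in $M_d(L)$ with inverse $C := (\det M)^{-1} \cdot \mathrm{adj}(M)$, whose entries $c_{ik}$ lie in $(\det M)^{-1}\, \OO_K \dcroc{X}$. Defining $e_i^* := \sum_k \phi(c_{ik}) \, e_k$, the projection formula $\psi(\phi(f) g) = f \cdot \psi(g)$ yields
\[ \psi(e_i^* e_j) = \sum_k \psi\bigl(\phi(c_{ik}) \cdot e_k e_j\bigr) = \sum_k c_{ik} M_{kj} = (CM)_{ij} = \delta_{ij}, \]
while $e_i^* \in \phi((\det M)^{-1}) \cdot \OO_K\dcroc{X} = \delta^{-1}\, \OO_K\dcroc{X} \subset \delta^{-1} \calE^+$, as required. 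The only non-formal step is the non-vanishing of the trace-form discriminant, which is exactly the characteristic-zero separability input in the previous paragraph; everything else is linear algebra over $L$ followed by clearing denominators through $\phi$.
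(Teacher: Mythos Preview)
Your proof is correct and follows essentially the same route as the paper: define the Gram matrix $M=(\psi(e_ie_j))$, take $\delta=\phi(\det M)=\det(\Tr(e_ie_j))$, argue its non-vanishing via separability of the relevant degree-$d$ field extension in characteristic zero, and set $e_i^*=\sum_k\phi(c_{ik})e_k$ with $C=M^{-1}$. The only cosmetic difference is that the paper phrases the separability step for $\calE/\phi(\calE)$ rather than for $\Frac\OO_K\dcroc{X}/\Frac\OO_K\dcroc{s(X)}$, but this amounts to the same thing.
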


\begin{proof}
Let $\delta(X) = \det (\Tr_{\calE^+ / \phi(\calE^+)} (e_i e_j))_{i,j} \in \calE^+$. The set $e_1,\hdots,e_d$ is a basis of $k \dpar{X}$ over $k \dpar{s(X)}$, and hence of $\calE$ over $\phi(\calE)$, so that $\Tr_{\calE^+ / \phi(\calE^+)} (e_i e_j) = \Tr_{\calE / \phi(\calE)} (e_i e_j)$. The field extension $\calE / \phi(\calE)$ is separable, hence $\delta(X) \neq 0$. We have $e_i^* = \sum_k \phi(g_{i,k}) e_k$ where $(g_{i,k})_{i,k} = ( \psi (e_m e_n)_{m,n})^{-1}$, so that $e_i^* \in \delta(X)^{-1} \cdot \calE^+$.
\end{proof}

\begin{coro}
\label{rpsumfr}
We have $\calR^+ = \oplus_{i=1}^d \phi(\calR^+) \cdot e_i$.
\end{coro}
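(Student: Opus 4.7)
The plan is to reduce the statement to the case of the ``coordinate'' basis $\{1,X,\hdots,X^{d-1}\}$ of $\OO_K\dcroc{X}$ over $\OO_K\dcroc{s(X)}$, and then transfer the decomposition along a change-of-basis matrix. Since $s_d \in \OO_K^\times$ we have $\overline{s} \in X^d \cdot k\dcroc{X}^\times$, so $k\dcroc{X}$ is free of rank $d$ over $k\dcroc{\overline{s}}$ with basis $1,X,\hdots,X^{d-1}$; by Nakayama and $\pi$-adic completeness, this lifts to show that $\OO_K\dcroc{X}$ is free of rank $d$ over $\OO_K\dcroc{s(X)}$ with the same basis. Since $e_1,\hdots,e_d$ is also such a basis, comparing the two bases yields a matrix $\tilde P = (\tilde p_{ij}) \in \operatorname{GL}_d(\OO_K\dcroc{X}) \subset \operatorname{GL}_d(\calR^+)$ with $X^{i-1} = \sum_j \phi(\tilde p_{ij}) \cdot e_j$ for all $i$.

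The main content is to establish $\calR^+ = \bigoplus_{j=0}^{d-1} \phi(\calR^+) \cdot X^j$. The decomposition $\calE^+ = \bigoplus_j \phi(\calE^+) \cdot X^j$ is precisely the free-module structure above after inverting $\pi$. The proof of Proposition~\ref{psirp} supplies the continuity estimates needed: writing $h \in \calE^+$ as $\sum_{j=0}^{d-1} \phi(\tilde h_j) X^j$, one has $|h|_r = \max_j r^j \cdot |\tilde h_j|_{r^d}$ for $r \geq r_0$, so each coordinate projection $h \mapsto \tilde h_j$ is continuous for the Fr\'echet family $\{|\cdot|_r\}_{r<1}$. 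By Lemma~\ref{frechrp} these projections extend continuously to $\calR^+$, giving $\calR^+ = \sum_{j} \phi(\calR^+) X^j$. For directness, if $\sum_j \phi(f_j) X^j = 0$ with $f_j \in \calR^+$, then for all but countably many $w \in \MM_{\Cp}$ the equation $s(z) = w$ has $d$ distinct roots $z_1,\hdots,z_d$ in the open unit disk, and evaluating the relation at each $z_i$ yields a Vandermonde linear system $\sum_j f_j(w) z_i^j = 0$ forcing $f_j(w) = 0$; density then implies $f_j \equiv 0$.

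To finish, given $h \in \calR^+$ I write $h = \sum_j \phi(\tilde h_j) X^{j-1}$ uniquely with $\tilde h_j \in \calR^+$, then substitute to obtain $h = \sum_i \phi(h_i) e_i$ where $h_i = \sum_j \tilde h_j \cdot \tilde p_{ji} \in \calR^+$. Uniqueness is symmetric: if $\sum_i \phi(h_i) e_i = 0$, then passing through $\tilde P^{-1}$ reduces to the $\{X^{j-1}\}$-decomposition, and combining its directness with the injectivity of $\phi$ and invertibility of $\tilde P$ gives $h_i = 0$. The only really delicate point in the whole argument is the Fr\'echet-continuity extension of the coordinate decomposition from $\calE^+$ to $\calR^+$, and this essentially comes for free from the estimate already worked out in the proof of Proposition~\ref{psirp}; the rest is linear algebra over $\OO_K\dcroc{X}$.
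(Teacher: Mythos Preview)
Your proof is correct, but it takes a different route from the paper. The paper's argument is a two-line application of Proposition~\ref{dualpsi}: since $h_i = \psi(h e_i^*)$ on $\calE^+$, and since $\psi$ and multiplication by the fixed elements $e_i^* \in \delta^{-1}\calE^+$ are continuous for the Fr\'echet topology (the finitely many zeros of $\delta$ being irrelevant for $r$ close to $1$), the projections extend to $\calR^+$ directly for the given basis $e_1,\hdots,e_d$.

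You instead avoid Proposition~\ref{dualpsi} and the operator $\psi$ altogether: you pass to the concrete basis $1,X,\hdots,X^{d-1}$, read off the continuity of the coordinate projections from the norm identity $|h|_r = \max_j r^j |\tilde h_j|_{r^d}$ (already present in the proof of Proposition~\ref{psirp}), and then transfer the decomposition along an invertible change-of-basis matrix over $\OO_K\dcroc{X}$. The Vandermonde argument for directness is a pleasant self-contained alternative, though as you essentially note it is redundant once the projections are known to extend continuously. Your approach is slightly more elementary in that it does not require constructing the dual basis or extending $\psi$; the paper's approach is shorter and works uniformly for any basis without a reduction step.
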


\begin{proof}
We have $\calE^+ = \oplus_{i=1}^d \phi(\calE^+) \cdot e_i$, and if $h = \sum \phi(h_i) \cdot e_i$, then $h_i = \psi(he_i^*)$. All the underlying maps extend by uniform continuity to $\calR^+$.
\end{proof}

\begin{rema}
\label{discpsi}
In the cyclotomic and Lubin-Tate cases, $\delta(X) \in (\calE^+)^\times$. However, if $s(X)=X^d$, then $\delta(X)$ is a multiple of $X^{d(d-1)}$. In general, the discriminant $\delta(X)$ is equal to $\Nm_{\calE^+ / \phi(\calE^+)} s'(X)$ since $\calE = \phi(\calE)[X]$.
\end{rema}

\begin{rema}
\label{nosumrr}
Corollary \ref{rpsumfr} cannot be pushed too far. For example, if $s'(0) \neq 0$ (which holds in the cyclotomic and Lubin-Tate cases), then $K \dcroc{X} = K \dcroc{s(X)}$.
\end{rema}

\section{The space $(\Frac \calR)^{\phi=\mu}$}
\label{secmainth}

In this section, we prove theorems A and B. Recall that $s(X)$ is of finite height.

\begin{prop}
\label{scalegv}
If $\mu \in K$ and $h \in \calR^{\phi=\mu}$, then $h \in \calR^+$.
\end{prop}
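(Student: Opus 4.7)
The plan is to first show $h\in\calR^{]0;1[}$ via the companion operator $\psi$, then to control $|h|_r$ as $r\to 0$ using the eigenvalue equation directly, and finally to invoke proposition \ref{bdrp}.

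I would start with reductions. One may assume $h\neq 0$; proposition \ref{egvoed} then forces $\mu\in\OO_{\calE}^\dagger\cap K=\OO_K$, so $|\mu|\leq 1$. If $\mu=0$ then $\phi(h)=0$, and proposition \ref{sfo}(3) gives $|h|_r=|\phi(h)|_{r^{1/d}}=0$ for $r$ near $1$, whence $h=0$; so one may assume $0<|\mu|\leq 1$. Applying $\psi$ to $\phi(h)=\mu h$ and using the identity $\psi(\phi(g))=d g$ together with the $K$-linearity of $\psi$ gives $d h=\mu\psi(h)$, hence $h=(\mu/d)\psi(h)$. Since $\psi$ maps $\calR^{[r;1[}$ into $\calR^{[\lambda(r);1[}$, iteration of this identity yields $h\in\calR^{[\lambda^n(r);1[}$ for every $n\geq 0$; and $\lambda^n(r)\to 0$ by lemma \ref{lambehv}, so $h\in\calR^{]0;1[}$.

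Next, fix $r_0\in|\Cp|$ smaller than the norm of any nonzero zero of $s$ in $D$; then $s$ does not vanish on $C(r)$ for $0<r\leq r_0$. The identity $\phi(h)(z)=h(s(z))$ holds on a large annulus near $|z|=1$ by proposition \ref{sfo}(2), and since both sides now lie in $\calR^{]0;1[}$, analytic continuation extends it to the whole punctured disk minus the zeros of $s$. Combined with $\phi(h)=\mu h$ and the inclusion $s(C(r))\supset C(\lambda(r))$ of proposition \ref{fsurj}, this yields for $r\leq r_0$ with $r\in|\Cp|$
\[ |\mu|\cdot|h|_r=|\phi(h)|_r=\sup_{|z|=r}|h(s(z))|\geq|h|_{\lambda(r)}. \]
Since $|\mu|\leq 1$, iteration produces $|h|_{\lambda^n(r_0)}\leq|h|_{r_0}$ for all $n$. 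The log-convexity of $s\mapsto|h|_s$ then bounds $|h|_s$ by $|h|_{r_0}$ uniformly on $(0,r_0]$, so $|h|_s$ remains bounded as $s\to 0$, and proposition \ref{bdrp} concludes $h\in\calR^+$.

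The main obstacle is the extension of the substitution identity $\phi(h)(z)=h(s(z))$ to small circles $C(r)$ once $h$ has been enlarged into $\calR^{]0;1[}$; this is what makes the norm-scaling inequality $|h|_{\lambda(r)}\leq|\mu|\cdot|h|_r$ available. One either invokes the identity principle on the punctured disk (minus the zeros of $s$), or observes directly that for $|z|=r<\rho(s)$ one has $|s(z)|=\lambda(r)$, so that the series $\sum h_n\,s(X)^n$ converges at $|X|=r$ in the same way that $h$ converges on $C(\lambda(r))$.
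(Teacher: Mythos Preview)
Your first step—applying $\psi$ to obtain $\psi(h)=(d/\mu)h$ and iterating proposition~\ref{psiconv} to reach $\calR^{]0;1[}$—matches the paper. The gap is in the second step.

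Your inequality $|\mu|\cdot|h|_r\geq|h|_{\lambda(r)}$ rests on the identity $\phi(h)(z)=h(s(z))$ for \emph{small} $|z|$, but proposition~\ref{sfo}(2) only gives this on the outer annulus, and corollary~\ref{supfrob} only for $h\in\calR^+$. Neither of your proposed justifications closes the gap. The phrase ``both sides now lie in $\calR^{]0;1[}$'' is circular: $\phi(h)=\mu h$ does, but $z\mapsto h(s(z))$ is a priori only a function on $D^*\setminus s^{-1}(0)$, and if $h^-\neq 0$ then $h(s(\cdot))$ has \emph{essential} singularities at the nonzero roots of $s$ (infinitely many negative-index coefficients of $h$ may be nonzero), so there is no elementary way to push the equality across those circles. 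The identity principle you invoke is really the rigid/Berkovich statement that $D^*\setminus s^{-1}(0)$ is connected—true, but a nontrivial external input absent from the paper. And showing that $\sum h_n\,s(z)^n$ converges only tells you the value of $h(s(z))$, not that it coincides with the value at $z$ of the Laurent series $\phi(h)\in\calR^{]0;1[}$; the defining expansion~($\Phi$) for $\phi$ only converges near the boundary.

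The paper avoids this by continuing to exploit the $\psi$-equation on the coefficient side. Lemma~\ref{pclem} gives $|\psi(1/X^n)|_{\lambda(r)}\leq r^{1-n}/\lambda(r)$, whence $|h^-|_{\lambda(r)}\leq C\,(r/\lambda(r))\,|h^-|_r$ directly from the series description of $\psi$, with no substitution identity needed. Iterating and using $\lambda(r)\leq|\pi|r$ for small $r$ shows $X^n h^-$ is bounded as $r\to 0$ once $n$ is large, so $h\in X^{-n}\calR^+$; a short argument with the $\phi$-equation at the origin (where it is unproblematic) then excludes a pole at $0$.
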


\begin{proof}
By applying $\psi$ to $\phi(h)=\mu \cdot h$, we get $\psi(h) = d/\mu \cdot h$. Repeatedly applying proposition \ref{psiconv} shows that $h \in \calR^{]0;1[}$. If $g \in \calR$, write $g=g^- + g^+$ with $g^- \in 1/X \cdot K \dcroc{1/X}$ and $g^+ \in \calR^+$. We have $\psi(h)^- = d/\mu \cdot h^-$. Lemma \ref{pclem} implies that there exists a constant $C$, depending only on $\mu/d$ and $K$, such that
\[ |h^-|_{\lambda(r)} = |\mu/d \cdot \psi(h)^-|_{\lambda(r)} \leq C \cdot r/\lambda(r) \cdot |h^-|_r. \] Iterating this gives $|h^-|_{\lambda^{\circ k}(r)}  \leq C^k \cdot r/\lambda^{\circ k}(r) \cdot |h^-|_r$. If $r$ is small enough, then $\lambda(r) \leq |\pi| r$ by lemma \ref{lambehv}. Fix such an $r$. If $n \geq 1$, then
\[ |X^n h^-|_{\lambda^{\circ k}(r)}  \leq C^k \cdot r/\lambda^{\circ k}(r) \cdot \lambda^{\circ k}(r)^n/r^n  \cdot |X^n h^-|_r \leq (C |\pi|^{n-1})^k \cdot |X^n h^-|_r. \]
If $n \geq 1$ is large enough so that $C |\pi|^{n-1} \leq 1$, proposition \ref{bdrp} implies that $X^n h^- \in \calR^+$. Hence if $\psi(h) = d/\mu \cdot h$, then $h(X) \in X^{-n} \cdot \calR^+$ for some $n \geq 0$.

If in addition $\phi(h) = \mu \cdot h$, then $h(s(X)) \in s(X)^{-n} \cdot \calR^+$. If $h$ has a pole at $0$, then it has poles at the zeroes of $s$. So unless $h \in \calR^+$, the only zeroes of $s$ are at $0$, and $0$ is then a zero of order $d$ of $s$. In this case, if $h$ has a pole of order $n$ at $0$, then $\phi(h)$ has a pole of order $dn$. We therefore have $h \in \calR^+$.
\end{proof}

\begin{rema}
\label{robinvrem}
This gives us another proof that $\calR^{\phi=1} = K$ (proposition \ref{robinvar}).
\end{rema}

\begin{proof}
If $h \in \calR^{\phi=1}$, then $h \in \calR^+$ by proposition \ref{scalegv}, and therefore $h \in K$ by proposition \ref{rpinv}.
\end{proof}

If $s'(0) \neq 0$, there exists an element $\log_s(X) \in X \cdot \calR^+$ such that $\phi(\log_s) = s'(0) \cdot \log_s$ (see for instance proposition 2.2 of \cite{L94}; if $r(X)=s(X)/(s'(0) \cdot X)$, and $m_r$ is as in proposition \ref{prodphi}, then $\log_s(X) = X \cdot m_r(X)$). Therefore $\log_s^k \in (\calR^+)^{\phi=s'(0)^k}$ if $k \geq 1$. 

\begin{theo}
\label{logegv}
If $\mu \neq 1 \in K$ and $h \neq 0 \in \calR^{\phi=\mu}$, then $s'(0) \neq 0$, and there exists $k \geq 1$ such that $\mu=s'(0)^k$ and $h \in K \cdot \log_s^k$.
\end{theo}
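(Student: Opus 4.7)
The plan is to reduce to $\calR^+$ using the preceding machinery, then read off the eigenvalue by looking at the order of vanishing at $0$, and finally compare with $\log_s$ via the fraction field invariance already established in corollary \ref{frpinv}.

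First, by proposition \ref{scalegv}, any $h \in \calR^{\phi=\mu}$ with $\mu \in K$ automatically lies in $\calR^+$. So I evaluate $\phi(h) = \mu \cdot h$ at $X = 0$: since $s(0) = 0$, this gives $h(0) = \mu \cdot h(0)$, and because $\mu \neq 1$ we conclude $h(0) = 0$. Let $k \geq 1$ be the order of vanishing of $h$ at $0$, and write $h(X) = X^k \cdot g(X)$ with $g \in \calR^+$ and $g(0) \neq 0$. Substituting into $\phi(h) = \mu h$ yields
\[ s(X)^k \cdot g(s(X)) = \mu \cdot X^k \cdot g(X), \]
and after dividing by $X^k$ and evaluating at $X=0$ (note $s(X)/X$ has constant term $s'(0)$), I obtain $s'(0)^k \cdot g(0) = \mu \cdot g(0)$, hence $\mu = s'(0)^k$.

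In particular $\mu = s'(0)^k$ with $k \geq 1$ forces $s'(0) \neq 0$: indeed, if $s'(0) = 0$ we would have $\mu = 0$, but then $\phi(h) = 0$ with $h \neq 0$, which is impossible because $\phi$ is injective on $\calR^+$ (by proposition \ref{fsurj} the image $s(D(r))$ covers a disk of positive radius, so $h \circ s = 0$ forces $h$ to vanish on a small disk around $0$ and hence in $\calR^+$).

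Now that I know $s'(0) \neq 0$ and $\mu = s'(0)^k$, I use $\log_s$. The element $\log_s \in X \cdot \calR^+$ is nonzero and satisfies $\phi(\log_s) = s'(0) \cdot \log_s$, so $\log_s^k \in \calR^+ \setminus \{0\}$ and $\phi(\log_s^k) = s'(0)^k \cdot \log_s^k = \mu \cdot \log_s^k$. Therefore the ratio $h / \log_s^k \in \Frac \calR^+$ satisfies $\phi(h/\log_s^k) = h/\log_s^k$, and by corollary \ref{frpinv} it lies in $K$. This gives $h \in K \cdot \log_s^k$, completing the proof. There is no real obstacle; the only slightly delicate point is the injectivity of $\phi$ used to rule out $\mu = 0$, which is automatic from surjectivity of $s$ onto small disks.
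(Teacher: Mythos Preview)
Your proof is correct and follows essentially the same route as the paper: reduce to $\calR^+$ via proposition \ref{scalegv}, read off $\mu = s'(0)^k$ from the leading terms at $0$, and conclude via corollary \ref{frpinv}. The only cosmetic difference is that the paper deduces $s'(0)\neq 0$ directly by comparing the order of vanishing of $\phi(h)$ (namely $jk$, where $s(X)=s_jX^j+\cdots$) with that of $\mu h$ (namely $k$), whereas you first derive $\mu=s'(0)^k$ and then rule out $s'(0)=0$ via injectivity of $\phi$; both arguments are equivalent.
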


\begin{proof}
If there exists $\mu \neq 1 \in K$ and $h \in \calR$ such that $\phi(h) = \mu h$, then $h \in \calR^+$ by proposition \ref{scalegv}, and $h(0) = \mu h(0)$ so that $h(0)=0$. If $h(X) = h_k X^k + \bigO(X^{k+1})$ and $s(X) = s_j X^j + \bigO(X^{j+1})$, with $h_k,s_j \neq 0$, then the order of vanishing at $0$ of $\mu h$ is $k$ and that of $\phi(h)$ is $jk$, so that $j=1$. This shows that $s'(0) \neq 0$. In this case, $\phi(h) = \mu h$ implies that $\mu = s_1^k = s'(0)^k$ where $s_1,k$ are as above. Corollary \ref{frpinv} now implies that $h = c \cdot \log_s^k$ with $c \in K$.
\end{proof}

\begin{prop}
\label{fracrp}
Take $a, b \in \calE^+$ such that $a(0), b(0) \neq 0$. 

If $h \in \calR$ is such that $\phi(h)/h = a/b$, then $h \in \Frac \calR^+$.
\end{prop}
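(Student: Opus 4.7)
The plan is to reduce the equation to one with a constant numerator via proposition \ref{prodphi}, and then adapt the $\psi$-eigenvector argument from proposition \ref{scalegv}. Let $\tilde a := a/a(0)$ and $\tilde b := b/b(0)$; these lie in $\calE^+$ with constant term $1$, and proposition \ref{prodphi} furnishes $m_{\tilde a} \in \calR^+$ with $\phi(m_{\tilde a}) \tilde a = m_{\tilde a}$. Setting $G := h \cdot m_{\tilde a} \in \calR$, the identity
\[ \frac{\phi(G)}{G} = \frac{\phi(h)}{h} \cdot \frac{\phi(m_{\tilde a})}{m_{\tilde a}} = \frac{a}{b} \cdot \frac{1}{\tilde a} = \frac{a(0)}{b} \]
rewrites the hypothesis as $b \phi(G) = a(0) G$, with the new ``numerator'' $a(0) \in K^\times$. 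Since $m_{\tilde a} \in \calR^+$ and $h = G/m_{\tilde a}$, it suffices to show $G \in \Frac \calR^+$.

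Applying $\psi$ to $b \phi(G) = a(0) G$ and using the projection formula $\psi(b \phi(G)) = \psi(b) G$ yields the $\psi$-eigenvector equation $\psi(G) = \lambda G$, where $\lambda := \psi(b)/a(0) \in \calE^+$ and $\lambda(0) = d b(0)/a(0) \in K^\times$. I now mimic the proof of proposition \ref{scalegv} with the scalar $d/\mu$ replaced by the function $\lambda$. Decomposing $G = G^+ + G^-$ into positive and negative Laurent parts, and observing that $\lambda G^+ \in \calR^+$ and $\psi(G^+) \in \calR^+$ (proposition \ref{psirp}), projection of $\psi(G) = \lambda G$ onto the negative Laurent part gives $(\lambda G^-)^- = \psi(G^-)^-$. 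Writing $\lambda = \lambda(0) + X \lambda'$ with $\lambda' \in \calE^+$, the strict ultrametric inequality on circles $|X| = r$ with $r$ small enough that $r |\lambda'|_r < |\lambda(0)|$ gives $|(\lambda G^-)^-|_r = |\lambda(0)| \cdot |G^-|_r$, while lemma \ref{pclem} provides $|\psi(G^-)|_{\lambda(r)} \leq (r/\lambda(r)) |G^-|_r$. Combining and iterating, together with $\lambda(r) \leq |\pi| r$ for small $r$ (lemma \ref{lambehv}), I obtain bounds of the form $|X^n G^-|_{\lambda^{\circ k}(r)} \leq (|\pi|^{n-1}/|\lambda(0)|)^k \cdot r^n |G^-|_r$, which remain bounded in $k$ once $n$ is large enough that $|\pi|^{n-1} \leq |\lambda(0)|$. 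Proposition \ref{bdrp} then forces $X^n G^- \in \calR^+$, so $G \in X^{-n} \calR^+ \subset \Frac \calR^+$ and finally $h = G/m_{\tilde a} \in \Frac \calR^+$.

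The main hurdle is the same as in proposition \ref{scalegv}: the iteration requires $G$, hence $G^-$, to make sense on arbitrarily small annuli, whereas a priori $G \in \calR^{[r_0;1[}$ for some $r_0 > 0$. In the scalar-eigenvalue case of proposition \ref{scalegv} this came for free from iterated $\psi$; here, with $\lambda$ a nonconstant function, I use the relation $G = \psi(G)/\lambda$ itself to meromorphically extend $G$ one step at a time. Since $\lambda \in \calE^+$ has only isolated zeros in the open unit disk and $\lambda(0) \neq 0$, each such extension enlarges the annulus of definition while introducing only finitely many new poles, and these poles do not interfere with the negative-Laurent-part norm estimates on the relevant circles, which is all that is needed to close the argument.
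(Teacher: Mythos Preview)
Your initial reduction via $m_{\tilde a}$ matches the paper's, but the subsequent $\psi$-eigenvector strategy has a real gap: the assertion $\lambda(0)=d\,b(0)/a(0)$ is false. From $\phi(\psi(b))=\Tr_{\calE^+/\phi(\calE^+)}(b)$ and evaluation at $X=0$ one finds that $\psi(b)(0)$ is the trace of multiplication by $b$ on the fibre of $s$ above $0$ (equal to $\sum_{s(\alpha)=0} b(\alpha)$ when the zeros of $s$ are simple), and this need not equal $d\,b(0)$ nor even be nonzero. Concretely, take $s(X)=\pi X+X^2$ (so $d=2$, zeros $0$ and $-\pi$) and $b(X)=1+(2/\pi)X\in\calE^+$: then $b(0)=1\neq 0$, but computing in the basis $1,X$ of $\calE^+$ over $\phi(\calE^+)$ one gets $\Tr(b)=2\cdot 1+(2/\pi)(-\pi)=0$, so $\psi(b)=0$ identically and $\lambda=0$. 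Your equation $\psi(G)=\lambda G$ then degenerates to $\psi(G)=0$, which carries almost no information; the norm estimate $|(\lambda G^-)^-|_r=|\lambda(0)|\,|G^-|_r$ becomes $0=0$, and the extension device $G=\psi(G)/\lambda$ of your final paragraph is $0/0$. Even in cases where $\psi(b)$ is not identically zero but $\psi(b)(0)=0$, the pole you introduce when dividing by $\lambda$ sits at the origin, which is exactly where you are trying to push $G^-$.

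The paper takes a different route after the same first reduction. Rather than passing to a $\psi$-equation, it works directly with $\phi(h)\cdot b=h$ and systematically eliminates the zeros of $b$: when $b$ has no zero on the current outer annulus one replaces $(h,b)$ by $(h/b,\phi(b))$, which pushes the zero-norms of $b$ outward; when $b$ does have zeros there, one takes a full isoclinic factor $c$ of $b$ with those zeros, shows from $\phi(h)\cdot b=h$ that the infinite product $m_c$ of proposition~\ref{prodphi} divides $h$ in $\calR^+$, and replaces $(h,b)$ by $(h/m_c,\,b/c)$. Since $b\in\calE^+$ has only finitely many slopes, this process terminates with $b$ a unit in $\OO_K\dcroc{X}$, at which point proposition~\ref{scalegv} applies with an honest scalar eigenvalue $b(0)^{-1}$. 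Your idea of collapsing everything into a single $\psi$-eigenvector equation is attractive, but it would require an independent way to guarantee $\psi(b)(0)\neq 0$, and no such mechanism is available under the stated hypotheses.
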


\begin{proof}
We can replace $a$ by $a/a(0)$ and $b$ by $b/a(0)$ so that $a(0)=1$. Let $m_a \in \calR^+$ be as in proposition \ref{prodphi}, so that $\phi(m_a) \cdot a=m_a$. We have $\phi(hm_a) / (h m_a) = 1/b$, so we only need to prove the claim when $a=1$.

Assume therefore that $\phi(h) = h/b$. Recall that $\rho(s)$ is the largest norm of a zero of $s$ in the open unit disk. Fix $r > \rho(s)$ such that $h \in \calR^{[r;1[}$. If $b$ has no zero in $A([r;1[)$, then $h/b \in \calR^{[r;1[}$ and $\phi(h/b) = (h/b) / \phi(b)$. If $y$ is a zero of $\phi(b)$, then $z = s(y)$ is a zero of $b$, and we have $|y| \geq \min(|z|^{1/d},|z|/|\pi|)$. We can therefore keep doing this until $\phi^{\circ n} (b)$ has a zero in $A([r;1[)$. So assume that $\phi(h) = h/b$ and that $b$ has a zero in $A([r;1[)$. Let $c$ be a full isoclinic factor of $b$ whose zeroes are in $A([r;1[)$ and such that $c(0)=1$. We have $\phi(h) \cdot b = h$ so that $c$ divides $h$ in $\calR^+$. If $h(z) = 0$ and $s(y)=z$, then $\phi(h)(y)=0$. Since $|y| > |z|$ and $c$ is isoclinic, we get that $\phi(c)$ divides $h$. By iterating this, we get that, if $m_c$ is the element attached to $c$ by proposition \ref{prodphi}, then $m_c$ divides $h$ in $\calR^+$. We then have $\phi(h/m_c) \cdot b/c = h/m_c$. This way, we can get rid of all the factors of $b$ corresponding to zeroes in $A([r;1[)$. 

By iterating the above two steps, we eventually get that $\phi(h) \cdot b = h$ where $b$ has no zeroes in $D$. Indeed, let $N(b) \subset \, ]0;1[$ denote the set of all the norms of the zeroes of $b$ (recall that $b(0) \neq 0$). Each time we divide $b$ by a full isoclinic factor, $\card N(b)$ strictly decreases. And each time we replace $b$ by $\phi(b)$, the elements of $N(b)$ strictly increase. Past the bound $r_s$ (see proposition \ref{sfo}), we have that if $|z| \geq r_s$ and $s(y)=z$, then $|y|=|z|^{1/d}$. Therefore, past that point, replacing $b$ by $\phi(b)$ will not increase $\card N(b)$, while dividing $b$ by a full isoclinic factor will strictly decrease $\card N(b)$. Hence eventually $\card N(b) = 0$.

The resulting element $b$ is therefore of the form $b(0) \cdot c$ where $c \in 1+ X \OO_K \dcroc{X}$. Applying proposition \ref{prodphi} to $c$, we get $m_c \in 1+ X \OO_K \dcroc{X}$ such that $\phi(h/m_c) \cdot b(0) = h/m_c$. Proposition \ref{scalegv} now implies that $h/m_c \in \calR^+$, and we are done.
\end{proof}

\begin{rema}
\label{fraked}
If in addition $h \in \calE^\dagger$, then $h \in \Frac \calR^+ \cap \calE^\dagger = \Frac \calE^+$ by lemma \ref{frpdag}.  

Now compare proposition \ref{fracrp} with lemma 5.4 of \cite{KFM}.
\end{rema}

\begin{theo}
\label{fracphimu}
If $\mu \in K$ and $f \in (\Frac \calR)^{\phi=\mu}$, then $f^{\pm 1} \in \calR^+$.
\end{theo}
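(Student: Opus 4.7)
The plan is to establish that $f \in \Frac \calR^+$, after which Proposition \ref{frpinvmu} directly yields $f^{\pm 1} \in \calR^+$. One may assume $\mu \neq 0$: otherwise $\phi(f) = 0$ forces $f = 0$ by injectivity of $\phi$ on $\Frac \calR$.

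The first move is to split off the overconvergent part using Lemma \ref{rpted}. Write $f = g/h$ with $g, h \in \calR$, decompose $g = g^+ g^\dagger$ and $h = h^+ h^\dagger$ with $g^+, h^+ \in \calR^+$ and $g^\dagger, h^\dagger \in \calE^\dagger$, and set $u := g^+/h^+ \in \Frac \calR^+$ and $v := g^\dagger/h^\dagger \in \calE^\dagger$ (using that $\calE^\dagger$ is a field). Then $f = uv$, so it suffices to prove $v \in \Frac \calR^+$, which by Lemma \ref{frpdag} is equivalent to $v \in \Frac \calE^+$. From $\phi(f) = \mu f$ one reads off $\phi(v)/v = \mu/\gamma$, where $\gamma := \phi(u)/u \in \Frac \calR^+$. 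Since the left-hand side lies in $\calE^\dagger$ and the right-hand side lies in $\Frac \calR^+$, both belong to $\Frac \calE^+$ by Lemma \ref{frpdag}.

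The main work is to put the equation $\phi(v)/v = \mu/\gamma$ into the hypothesis of Proposition \ref{fracrp}, which demands a ratio $a/b$ with $a, b \in \calE^+$ and $a(0), b(0) \neq 0$. Let $j \in \{1, \ldots, d\}$ denote the $X$-adic order of $s$ in $\OO_K\dcroc{X}$ and let $k := \operatorname{ord}_0 u \in \ZZ$. Since $\phi(u) = u \circ s$, one checks $\operatorname{ord}_0 \gamma = jk - k = (j-1)k$, so $\operatorname{ord}_0(\mu/\gamma) = -(j-1)k$. Introducing the monomial twist $v' := X^k v \in \calE^\dagger \subset \calR$, a direct calculation gives
\[ \phi(v')/v' = (s/X)^k \cdot (\mu/\gamma), \]
which has order $(j-1)k - (j-1)k = 0$ at the origin. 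Its lowest-terms expression $a/b$ in $\calE^+$ therefore satisfies $a(0), b(0) \neq 0$, and Proposition \ref{fracrp} applies to $v'$, yielding $v' \in \Frac \calR^+$. Hence $v = v'/X^k \in \Frac \calR^+$, $f = uv \in \Frac \calR^+$, and Proposition \ref{frpinvmu} concludes that $f^{\pm 1} \in \calR^+$.

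The crucial point is that $\operatorname{ord}_0(\mu/\gamma)$ is automatically a multiple of $j-1$, a consequence of the identity $\gamma = \phi(u)/u$, which is what makes the monomial twist $v \mapsto X^k v$ cancel the order at the origin exactly. In the étale case $j = 1$ (for instance when $s'(0) \neq 0$) the twist is vacuous and Proposition \ref{fracrp} applies to $v$ directly.
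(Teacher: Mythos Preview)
Your proof is correct and follows the same route as the paper: use Lemma \ref{rpted} to split off the $\calE^\dagger$-part $v$, observe via Lemma \ref{frpdag} that $\phi(v)/v \in \Frac \calR^+ \cap \calE^\dagger = \Frac \calE^+$, normalize so that Proposition \ref{fracrp} applies, and conclude with Proposition \ref{frpinvmu}. The only difference is cosmetic: the paper achieves $a(0), b(0) \neq 0$ by dividing $f^+$ and $g^+$ by powers of $X$ at the outset (after which $\phi(f^+)(0) = f^+(s(0)) = f^+(0) \neq 0$ automatically), whereas you accomplish the same thing afterward via the explicit monomial twist $v \mapsto X^k v$ and the order computation.
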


\begin{proof}
Take $f/g \in (\Frac \calR)^{\phi=\mu}$. By lemma \ref{rpted}, we can assume that $g=g^+ \in  \calR^+$ and that $f=f^+ h$ with $f^+ \in \calR^+$ and $h \in \calE^\dagger$. We get
\[ \frac{\phi(h)}{h} =  \mu \cdot \frac{f^+ \cdot \phi(g^+)}{\phi(f^+) \cdot g^+} \in \Frac \calR^+ \cap \calE^\dagger = \Frac \calE^+\]
where the last equality follows from lemma \ref{frpdag}. Hence we can write $\phi(h)/h=a/b$ with $a,b \in \calE^+$. In addition, we can divide $f^+$ and $g^+$ by powers of $X$, and assume that $f^+(0),g^+(0) \neq 0$, and then that $a(0),b(0) \neq 0$.

By proposition \ref{fracrp}, $h \in \Frac \calR^+$. Therefore, $f/g  = f^+h/g^+$ belongs to $\Frac \calR^+$. The claim now follows from proposition \ref{frpinvmu}.
\end{proof}

\begin{rema}
\label{frdimp}
\begin{enumerate}
\item 
Compare with lemma 5.6 of \cite{KFM} (or rather its corrected version, see the errata to ibid.)
\item 
In the cyclotomic case, namely when $s(X)=(1+X)^p-1$, the computations of \S 3.2 of \cite{CLA} give a different proof of the fact that $(\Frac \calR)^{\phi=\mu} = (\Frac \calR^+)^{\phi=\mu}$.
\end{enumerate}
\end{rema}

We can now state theorem A.

\begin{theo}
\label{fracphi}
If $\phi$ is of finite height, then $(\Frac \calR)^{\phi=1} = K$. In addition,
\begin{enumerate}
\item $(\Frac \calR)^{\phi=s'(0)^k} = K \cdot \log_s^k$ if $s'(0) \neq 0$ and $k \in \ZZ$;
\item $(\Frac \calR)^{\phi=\mu} = \{0\}$ if $\mu \neq 1 \in K$ and if either $s'(0)=0$ or if $s'(0) \neq 0$ and $\mu$ is not of the form $s'(0)^k$ for some $k \in \ZZ$.
\end{enumerate}
\end{theo}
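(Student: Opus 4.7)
The plan is to deduce this theorem by assembling three earlier results: Theorem \ref{fracphimu}, which implies that any $f \in (\Frac \calR)^{\phi=\mu}$ satisfies $f \in \calR^+$ or $f^{-1} \in \calR^+$; Corollary \ref{frpinv}, which gives $(\Frac \calR^+)^{\phi=1} = K$; and Theorem \ref{logegv}, which classifies the possible eigenvalues of $\phi$ acting on $\calR$ other than $1$. With these in hand, essentially no new analytic work is required.

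For the main assertion $(\Frac \calR)^{\phi=1} = K$, I would take $f \in (\Frac \calR)^{\phi=1}$. By Theorem \ref{fracphimu}, either $f$ or $f^{-1}$ belongs to $\calR^+ \subset \Frac \calR^+$, and that element is still $\phi$-fixed, so Corollary \ref{frpinv} places it in $K$; hence $f \in K$.

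For item (1), I would first note that since $\log_s$ is a nonzero element of $\calR^+$, the power $\log_s^k$ makes sense in $\Frac \calR^+$ for every $k \in \ZZ$, and the relation $\phi(\log_s) = s'(0) \cdot \log_s$ gives $\phi(\log_s^k) = s'(0)^k \log_s^k$. Thus $K \cdot \log_s^k \subseteq (\Frac \calR)^{\phi=s'(0)^k}$. Conversely, given $f$ in the right-hand side, the quotient $g = f/\log_s^k \in \Frac \calR$ satisfies $\phi(g) = g$, so by the main assertion $g \in K$, which yields $f \in K \cdot \log_s^k$.

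For item (2), suppose for contradiction that $f \neq 0$ lies in $(\Frac \calR)^{\phi=\mu}$. By Theorem \ref{fracphimu}, after possibly replacing $f$ by $f^{-1}$ and $\mu$ by $\mu^{-1}$ — and observing that the hypothesis ``$\mu$ is not of the form $s'(0)^k$ for some $k \in \ZZ$'' is stable under inversion — we may assume $f \in \calR^+$. Since $\mu \neq 1$, Theorem \ref{logegv} then forces $s'(0) \neq 0$ and $\mu = s'(0)^k$ for some $k \geq 1$, contradicting the hypothesis. Hence $(\Frac \calR)^{\phi=\mu} = \{0\}$. The only minor piece of bookkeeping is tracking the compatibility with inversion (so that the argument also works when the witness sits in the denominator), which is where the symmetry of the excluded set under $\mu \mapsto \mu^{-1}$ is essential; otherwise, since all the analytic content is already packaged in Theorems \ref{fracphimu} and \ref{logegv}, no substantive obstacle remains.
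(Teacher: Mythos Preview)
Your proof is correct and follows essentially the same route as the paper, which simply cites Theorem~\ref{fracphimu}, Proposition~\ref{rpinv}, and Theorem~\ref{logegv}; you have filled in the details faithfully. The only cosmetic difference is that for item~(1) you argue by dividing through by $\log_s^k$ and invoking the $\phi=1$ case, whereas one could equally well apply Theorem~\ref{logegv} directly after Theorem~\ref{fracphimu} (tracking the sign of $k$ via $|s'(0)|<1$); both are straightforward unpackings of the same three ingredients.
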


\begin{proof}
This follows from theorem \ref{fracphimu}, proposition \ref{rpinv} and theorem \ref{logegv}.
\end{proof}

\section{Application to $\phi$-modules}
\label{secphimod}

In this section, we assume that $(\Frac \calR)^{\phi=1}=K$, and we give some applications to $\phi$-modules.  A $\phi$-module over $\Frac \calR$ is a finite dimensional $\Frac \calR$-vector space, with a semi-linear map $\phi : \mfont \to \mfont$.

\begin{prop}
\label{mphinv}
If $\mfont$ is a $\phi$-module over $\Frac \calR$, then $\mfont^{\phi=1} \otimes_K {\Frac \calR} \to \mfont$ is injective. In particular, $\mfont^{\phi=1}$ is a finite dimensional $K$-vector space.
\end{prop}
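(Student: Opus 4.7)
The plan is to run the classical ``shortest relation'' argument, using the hypothesis $(\Frac\calR)^{\phi=1}=K$ to kill the coefficients.

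Suppose for contradiction that the natural map $\mfont^{\phi=1} \otimes_K \Frac\calR \to \mfont$ is not injective. Then there exist elements $m_1,\dots,m_n \in \mfont^{\phi=1}$ that are linearly independent over $K$ but linearly dependent over $\Frac\calR$. Pick such a dependence of minimal length $n$, say $\sum_{i=1}^n a_i m_i = 0$ with all $a_i \in \Frac\calR$ nonzero. After dividing through by $a_1$, we may assume $a_1 = 1$.

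Now apply $\phi$ to this relation. Since $\phi(m_i)=m_i$ and $\phi$ is a ring endomorphism of $\Frac\calR$, we obtain $\sum_{i=1}^n \phi(a_i)\, m_i = 0$. Subtracting the two relations gives
\[ \sum_{i=2}^n (\phi(a_i) - a_i)\, m_i = 0, \]
a relation of length at most $n-1$. By minimality of $n$, each coefficient must vanish, so $\phi(a_i) = a_i$ for $i=2,\dots,n$ (and trivially for $i=1$). Hence every $a_i$ lies in $(\Frac\calR)^{\phi=1}$, which by hypothesis equals $K$. This contradicts the $K$-linear independence of $m_1,\dots,m_n$, and proves injectivity.

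The ``in particular'' statement follows immediately: since $\mfont$ has finite dimension over $\Frac\calR$, the injectivity of $\mfont^{\phi=1} \otimes_K \Frac\calR \hookrightarrow \mfont$ gives
\[ \dim_K \mfont^{\phi=1} \leq \dim_{\Frac\calR} \mfont < \infty. \]
There is no real obstacle here; the only input beyond formal manipulations is the hypothesis $(\Frac\calR)^{\phi=1}=K$, which is precisely what theorem \ref{fracphi} supplies in the finite height case.
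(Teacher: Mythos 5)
Your proof is correct and follows essentially the same route as the paper: take a minimal-length dependence relation, normalize the first coefficient to $1$, apply $\phi$ (using $\phi(m_i)=m_i$), subtract to get a shorter relation, invoke minimality to conclude each coefficient is $\phi$-invariant and hence lies in $K$, contradicting $K$-linear independence. The finite-dimensionality conclusion is drawn in the same way.
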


\begin{proof}
Let $m_1 \otimes f_1 + \cdots + m_r \otimes f_r$ be in the kernel of the map, with $r$ minimal. We can assume that $f_1=1$. Applying $\phi$ and subtracting gives a shorter relation, which is zero by minimality, so that $\phi(f_i)= f_i$ for all $i$. Hence $f_i \in (\Frac \calR)^{\phi=1}=K$.
\end{proof}

A $\phi$-module over $\calR$ is a free $\calR$-module $\dfont$ of finite rank, with a semi-linear map $\phi : \dfont \to \dfont$ (one usually assumes in addition that $\phi^*(\dfont)=\dfont$, but we do not use this). 

\begin{coro}
\label{dphinv}
If $\dfont$ is a $\phi$-module over $\calR$, then $\dfont^{\phi=1} \otimes_K \calR \to \dfont$ is injective. In particular, $\dfont^{\phi=1}$ is a finite dimensional $K$-vector space.
\end{coro}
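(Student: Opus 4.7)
The plan is to reduce the corollary to the already-proved Proposition \ref{mphinv} by passing from the $\phi$-module $\dfont$ over $\calR$ to its localisation. Set $\mfont = \Frac\calR \otimes_\calR \dfont$; this is a $\phi$-module over $\Frac\calR$ in a natural way. Since $\dfont$ is a free $\calR$-module (hence $\calR$-torsion free) and $\calR \hookrightarrow \Frac\calR$, the canonical map $\dfont \to \mfont$ is injective and $\phi$-equivariant.

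The finite-dimensionality assertion then follows immediately: taking $\phi$-invariants we obtain an injection $\dfont^{\phi=1} \hookrightarrow \mfont^{\phi=1}$, and by Proposition \ref{mphinv} the right-hand side is a finite-dimensional $K$-vector space, so the left-hand side is too.

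For the injectivity of the map $\dfont^{\phi=1} \otimes_K \calR \to \dfont$, I would consider the commutative square whose horizontal arrows are the natural evaluation maps and whose vertical arrows come from $\calR \hookrightarrow \Frac\calR$ on the left and $\dfont \hookrightarrow \mfont$ on the right:
\[
\begin{CD}
\dfont^{\phi=1} \otimes_K \calR @>>> \dfont \\
@VVV @VVV \\
\mfont^{\phi=1} \otimes_K \Frac\calR @>>> \mfont.
\end{CD}
\]
The right vertical arrow is injective by construction of $\mfont$. The left vertical arrow is injective because it is the tensor product (over the field $K$, hence flat) of the two injections $\dfont^{\phi=1} \hookrightarrow \mfont^{\phi=1}$ and $\calR \hookrightarrow \Frac\calR$. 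The bottom horizontal arrow is injective by Proposition \ref{mphinv}. Chasing around the square shows that the top horizontal arrow is injective, which is the desired statement.

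I do not anticipate a genuine obstacle here; everything rests on the freeness of $\dfont$ (to ensure $\dfont \hookrightarrow \mfont$) and on the $K$-flatness used to lift injectivity through the tensor product. The only point worth emphasising is that we do not need to assume $\phi^*(\dfont) = \dfont$, exactly as noted in the definition preceding the corollary, since the argument uses $\phi$ only through the inclusion $\dfont^{\phi=1} \subset \mfont^{\phi=1}$.
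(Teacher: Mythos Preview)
Your proposal is correct and follows exactly the approach of the paper, which simply says ``This follows from proposition \ref{mphinv} applied to $\mfont = \Frac \calR \otimes_{\calR} \dfont$.'' You have merely spelled out the details (injectivity of $\dfont \hookrightarrow \mfont$ from freeness, and the commutative-square diagram chase) that the paper leaves implicit.
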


\begin{proof}
This follows from proposition \ref{mphinv} applied to $\mfont = \Frac \calR \otimes_{\calR} \dfont$.
\end{proof}

\begin{rema}
\label{fxproof}
This gives a proof of the unproved assertion ``Note that $\dfont^{\phi_q=1}$ is finite-dimensional over $L$'' on page 2571 of \cite{FX}.
\end{rema}

We say that $\calR'/\calR$ is a finite extension of Robba rings if $\calR'$ itself is a Robba ring with coefficients in a finite extension $L$ of $K$, and in a variable $Y$, and if $\calR'$ is a free $\calR$-module of finite rank. We also assume that $\phi$ extends to $\calR'$. These objects occur for instance in $p$-adic Hodge theory, when $\calR$ is attached to a $p$-adic field $F$, and we are given a finite extension $F'/F$. In this case there is a corresponding finite extension $\calR'/\calR$ of Robba rings as defined above (see for instance \S I.2 of \cite{LB08}). 
For example, take  $K=L=\Qp$ and $s(X)=(1+X)^p-1$ (the cyclotomic case). If $Y = X^{1/n}$ with $n \geq 1$, then $\calR'/\calR$ is a finite extension of Robba rings of degree $n$, and if $p \nmid n$ we can set
\[ \phi(Y) = \left((1+X)^p-1\right)^{1/n} = Y^p \cdot \left(1+\frac{p}{Y^n} + \cdots + \frac{p}{Y^{n(p-1)}} \right)^{1/n} \in (\calE')^\dagger.\]

\begin{coro}
\label{etalext}
Let $\calR'/\calR$ be a finite extension of Robba rings, with coefficients in $L$ and $K$, such that $\phi$ extends to $\calR'$. If $(\Frac \calR)^{\phi=1}=K$, then $(\Frac \calR')^{\phi=1}=L$.
\end{coro}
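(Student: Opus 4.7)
The plan is to apply proposition \ref{mphinv} to $\mfont = \Frac \calR'$ viewed as a $\phi$-module over $\Frac \calR$, and then use that $L$ is algebraically closed in $\Frac \calR'$ via lemma \ref{algrob}. First I would check that $\Frac \calR'$ is a finite-dimensional $\Frac \calR$-vector space: if $\calR'$ is free of rank $n$ over $\calR$, then $\calR' \otimes_\calR \Frac \calR$ is an $n$-dimensional $\Frac \calR$-algebra which is a domain (it embeds in $\Frac \calR'$), and any nonzero element, being integral over $\Frac \calR$, is invertible, so $\calR' \otimes_\calR \Frac \calR = \Frac \calR'$ as required.

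Next, since $\phi$ extends to $\calR'$ as an $L$-linear substitution in the variable $Y$ (as illustrated by the cyclotomic example preceding the statement), we have $L \subseteq (\Frac \calR')^{\phi=1}$. Applying proposition \ref{mphinv} to the $n$-dimensional $\phi$-module $\Frac \calR'$ over $\Frac \calR$ and using the hypothesis $(\Frac \calR)^{\phi=1} = K$, we conclude that $(\Frac \calR')^{\phi=1}$ is a finite-dimensional $K$-vector space (of dimension at most $n$).

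Now $(\Frac \calR')^{\phi=1}$ is a $K$-subalgebra of the field $\Frac \calR'$, hence a domain, and it is finite-dimensional over $K$ and contains $L$, so it is a finite extension of $L$ sitting inside $\Frac \calR'$. Since $\calR'$ is itself a Robba ring over $L$, lemma \ref{algrob} (applied with $L$ in place of $K$ and $\calR'$ in place of $\calR$) says that $L$ is algebraically closed in $\Frac \calR'$. Therefore $(\Frac \calR')^{\phi=1} = L$.

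The only potentially subtle point is the input $L \subseteq (\Frac \calR')^{\phi=1}$, which is implicit in the convention that an extension of Robba rings comes with an $L$-linear extension of the substitution map; once this is recorded, the rest is the two-line combination of proposition \ref{mphinv} with lemma \ref{algrob}.
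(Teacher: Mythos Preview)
Your proof is correct and follows essentially the same route as the paper: view $\Frac \calR'$ as a $\phi$-module over $\Frac \calR$, apply proposition \ref{mphinv} to get that $(\Frac \calR')^{\phi=1}$ is finite-dimensional over $K$, observe it is a field extension of $L$, and conclude via lemma \ref{algrob} applied to $\calR'$. Your write-up is somewhat more detailed than the paper's (you spell out why $\calR' \otimes_\calR \Frac \calR = \Frac \calR'$ and flag the implicit assumption that the extended $\phi$ is $L$-linear), but the logical skeleton is identical.
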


\begin{proof}
The hypotheses on $\calR'/\calR$ imply that $\Frac \calR'$ is a finite extension of $\Frac \calR$, and therefore also a $\phi$-module over $\Frac \calR$. By proposition \ref{mphinv}, $(\Frac \calR')^{\phi=1}$ is a finite dimensional $K$-vector space. It is also a field extension of $L$. The corollary now results from lemma \ref{algrob} applied to $\calR'$.
\end{proof}

\section{Convergence close to the origin}
\label{secforced}

We still assume $s(X)$ to be of finite height. Recall (see remark \ref{subrzo}) that it is not true in general that $\phi$ preserves $\calR^{]0;1[}$. For example, $1/X \in \calR^{]0;1[}$ but $\phi(1/X) = 1/s(X)$, that belongs to $\calR^{[r;1[}$ only if $r> \rho(s)$. We propose the following conjecture.

\begin{conj}
\label{foreg}
If $h \in \calR$ is such that $\phi(h) \in \calR^{[\rho(s);1[}$, then $h \in \calR^+$.
\end{conj}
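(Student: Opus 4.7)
The plan is to show directly that $h$ extends analytically from its annulus of convergence to the whole open unit disk $D$, by combining the operator $\psi$ of Section~\ref{secpsi} with a Newton polygon analysis of the fibers of $s$. As a first reduction, since $\psi(\phi(h)) = d \cdot h$ and proposition~\ref{psiconv} gives $\psi(\calR^{[\rho(s);1[}) \subset \calR^{[\lambda(\rho(s));1[}$, I get $h \in \calR^{[\lambda(\rho(s));1[}$ immediately, so $h$ already converges on $A([\lambda(\rho(s));1[)$ and the remaining question is whether it continues analytically all the way in.

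The key geometric input is that for every $w \in D$, the equation $s(X) = w$ has at least one root $z$ with $|z| \in [\rho(s);1[$. Indeed, the Newton polygon of $s(X) - w$ is obtained from that of $s(X)$ by adjoining the point $(0,\val(w))$; if $|w| \leq \rho(s)^d = \lambda(\rho(s))$, the added point lies on or above the line extending the rightmost segment of $s$'s polygon (slope $-\val(\rho(s))$, terminating at $(d,0)$), so that segment persists in the polygon of $s - w$ and produces roots of norm exactly $\rho(s)$; if $|w| > \rho(s)^d$, the rightmost segment of the new polygon has slope at least $-\val(w)/d$, giving a root of norm at least $|w|^{1/d} > \rho(s)$. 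Since $s - w$ has Weierstrass degree $d$ when $|w|<1$, its $d$ roots all lie in $D$, and in either case at least one of them lies in $A([\rho(s);1[)$.

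Granting this, I would define $\tilde h(w) := \phi(h)(z)$, where $z \in s^{-1}(w) \cap A([\rho(s);1[)$ is any preimage supplied by the Newton polygon analysis. Well-definedness comes from the identity $\phi(h) = h \circ s$: on the nonempty open subset of $A([\rho(s);1[)$ where $|s(z)|$ is large enough for $s(z)$ to lie in the convergence annulus of $h$, the function $\phi(h)$ is constant on the fibers of $s$, and this constancy propagates to the whole of $A([\rho(s);1[)$ by the identity theorem applied on each component of the fibered product $A([\rho(s);1[) \times_D A([\rho(s);1[)$. Away from the finitely many branch values of $s$, local analytic sections make $\tilde h$ analytic, and at branch values $\tilde h$ extends by a removable singularity argument (or directly by the convergence of $h$ on $A([\lambda(\rho(s));1[)$ already established). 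Since $\tilde h$ agrees with $h$ on that overlap, $h$ extends to $\tilde h$ and therefore lies in $\calR^+$.

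The hard part will be making the well-definedness step rigorous: an analytic sheet of $s^{-1}$ may enter or leave $A([\rho(s);1[)$ as $w$ varies (for instance as $w$ approaches $0$ or a zero of $s$ on the circle $C(\rho(s))$), so the connected components of the fibered product restricted to $A([\rho(s);1[)^2$ can be intricate, and one must verify that each of them meets the overlap region where $\phi(h) = h \circ s$ genuinely holds. The cyclotomic proof of proposition~\ref{foregcyc} should provide a template, since there the preimages are described explicitly as $z = (1+w)^{1/p}\zeta - 1$; in the general case a cleaner route may be to work directly with the trace identity $d \cdot h(w) = \sum_{s(z)=w} \phi(h)(z)$, handling the small-norm preimages (where $\phi(h)$ is not directly controlled) by isolating $\phi(h^+)$.
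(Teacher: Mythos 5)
What you are trying to prove is stated in the paper as a \emph{conjecture}, not a theorem: the only case the paper actually establishes is the cyclotomic one, $s(X)=(1+X)^p-1$ (proposition~\ref{foregcyc}). Your proposal therefore aims at something strictly stronger than what is known, and the difficulty you flag at the end is not a technical wrinkle to be patched but the actual open problem.

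Concretely, the step that fails is the well-definedness of $\tilde h$. Your reduction via $\psi$ gives $h \in \calR^{[\lambda(\rho(s));1[}$ with $\lambda(\rho(s))=\rho(s)^d$, and the identity $\phi(h)(z)=h(s(z))$ is available (as an equality of convergent series, hence of values) only when $|s(z)| \geq \rho(s)^d$. But if $z_1,z_2 \in A([\rho(s);1[)$ are two preimages of the same $w$ with $|w|<\rho(s)^d$, then $|s(z_i)|=|w|<\rho(s)^d$ for both, so neither is in the range where the identity holds, and there is no a priori reason that $\phi(h)(z_1)=\phi(h)(z_2)$. The appeal to ``the identity theorem applied on each component of the fibered product'' would require every rigid-analytic connected component of $A([\rho(s);1[)\times_D A([\rho(s);1[)$ lying over small $|w|$ to meet the locus where the identity is genuinely known — and in rigid geometry nothing guarantees this, since an analytic sheet of $s^{-1}$ can stay confined near the circle $C(\rho(s))$ without ever climbing into the region $|z|>\rho(s)^{1/d}$. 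Your Newton-polygon analysis of the fibers is correct and does show that \emph{some} preimage of each $w$ lies in $A([\rho(s);1[)$, but it does not connect that preimage analytically to the overlap region, which is the missing ingredient.

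The reason the cyclotomic proof in the paper succeeds has no analogue in your setup: there $\gamma: X \mapsto (1+X)\eps-1$ (with $\eps$ a primitive $p$-th root of unity) is an automorphism of $\calR$ satisfying $s\circ\gamma=s$, so $g:=\phi(h)$ automatically satisfies the extra functional equation $g((1+X)\eps-1)=g(X)$. Combined with the explicit binomial/Mahler-transform computation and the Amice--Fresnel-type rigidity of lemma~\ref{transconv}, this forces the negative part of $g$ to vanish, giving $g\in\calR^+$ and then $h=\tfrac1p\psi(g)\in\calR^+$. Your fiber-by-fiber construction bypasses this algebraic input and has nothing to replace it with; the trace identity $d\cdot h(w)=\sum_{s(z)=w}\phi(h)(z)$ you suggest at the end suffers from the same defect, since the sum runs over \emph{all} preimages $z$, including those of small norm where $\phi(h)(z)$ is uncontrolled. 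As a proof of the general conjecture, the proposal is incomplete, and filling the gap appears to require a genuinely new idea.
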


\begin{prop}
\label{foregcyc}
Conjecture \ref{foreg} is true in the cyclotomic case, namely when $s(X)=(1+X)^p-1$.
\end{prop}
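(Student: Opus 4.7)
My plan is to change coordinates to $\tau = 1+X$ and write $g(\tau) = h(\tau - 1)$. Then $\phi(h)(X) = g(\tau^p)$, so denoting by $\pi : D \to D$ the $p$-th power map $\tau \mapsto \tau^p$ on the open disk $D := \{|\tau - 1| < 1\}$, the hypothesis ``$\phi(h) \in \calR^{[\rho(s);1[}$'' says that $g \circ \pi$ is analytic on the annulus $A := \{|\tau - 1| \in [\rho(s);1)\} \subset D$, while the goal $h \in \calR^+$ is equivalent to analyticity of $g$ on all of $D$.

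The first step is to observe that $\pi$ is a finite étale Galois cover of $D$ by itself: every $p$-th root of an element of $D$ still lies in $D$ because $v^p \equiv 1 \pmod{\MM_{\Cp}}$ forces $v \equiv 1 \pmod{\MM_{\Cp}}$ in characteristic $p$; the derivative $\pi'(\tau) = p \tau^{p-1}$ is non-vanishing on $D$; and the Galois group $\{\zeta \in \Cp : \zeta^p = 1\}$ acts on $D$ by $\tau \mapsto \zeta \tau$. A short Newton-polygon analysis of $\tau^p - \sigma$ shows that the restriction $\pi|_A : A \to D$ is still surjective: for each $\sigma \in D$ at least $p-1$ of the $p$ preimages lie in $A$ (they all lie on the inner circle $|\tau - 1| = \rho(s)$ when $|\sigma - 1| \leq \rho(s)^p$, and all in the interior of $A$ when $|\sigma - 1| > \rho(s)^p$).

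I would then perform Galois descent. For each $\sigma \in D$, pick any $\tau \in A \cap \pi^{-1}(\sigma)$ and set $\tilde g(\sigma) := (g \circ \pi)(\tau)$; this is well defined because $g \circ \pi$ depends only on $\pi(\tau) = \tau^p$. Local analyticity of $\tilde g$ at $\sigma$ follows by composing $g \circ \pi$ (analytic on $A$ by hypothesis) with an analytic local inverse of $\pi$ around $\tau$, which exists by étaleness. Since the local expressions automatically agree, they patch to an analytic function on $D$, and on the common domain of convergence $\tilde g(\sigma) = g(\sigma)$, so $\tilde g$ extends $g$ analytically to $D$, giving $h \in \calR^+$.

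The main technical obstacle will be checking that the local inverse $\tau(\sigma)$ really stays in $A$ on a disk of positive radius around each $\sigma_0 \in D$. At a preimage $\tau_0 \in A$ with $|\tau_0 - 1| = r_0 \geq \rho(s)$, inversion of $\pi$ gives $\tau(\sigma) - \tau_0 = (\sigma - \sigma_0)/(p \tau_0^{p-1}) + \bigO((\sigma - \sigma_0)^2)$, so $|\tau(\sigma) - \tau_0| = |\sigma - \sigma_0|/|p|$ to leading order. For $|\sigma - \sigma_0| < |p| r_0$ this perturbation is smaller than $|\tau_0 - 1| = r_0$, so ultrametrically $|\tau(\sigma) - 1| = r_0 \geq \rho(s)$, i.e., $\tau(\sigma) \in A$. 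Since $|p| r_0 \geq |p| \rho(s) = \rho(s)^p > 0$, this gives a positive-radius disk at every $\sigma_0$, in particular at the critical case $\sigma_0 = 1$ (whose relevant preimages are the primitive $p$-th roots of unity).
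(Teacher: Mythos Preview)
Your geometric approach---passing to the coordinate $\tau = 1+X$ and exploiting that $\pi : \tau \mapsto \tau^p$ is an \'etale $\mu_p$-cover of $D$ by itself---is natural and would succeed over $\mathbf{C}$, but it fails at the patching step in the $p$-adic setting. The assertion ``the local expressions automatically agree, they patch to an analytic function on $D$'' is false for $p$-adic disks: a function on $D$ that is given by a convergent power series on a small disk around every point need not lie in $\calR^+$. The local disks you produce have radius at most $|p|$, and in an ultrametric world two such disks are either disjoint or nested; there is no genuine overlap forcing a single global expansion. (Concretely: the indicator function of the sub-disk $|\tau - 1| < \rho(s)^p$ is locally constant, hence locally analytic by your criterion, yet is certainly not in $\calR^+$.) In rigid-analytic language, your covering of $D$ by small open disks is not admissible, so the sheaf axiom does not apply.

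There is a second gap in the well-definedness of $\tilde g$. Your justification ``$g\circ\pi$ depends only on $\tau^p$'' is immediate only on the sub-annulus of $A$ where $g$ itself already converges, namely where $h(s(X))$ makes sense as a function value. For $|\sigma - 1| \leq \rho(s)^p$ the preimages of $\sigma$ lying in $A$ sit on the boundary circle $|\tau - 1| = \rho(s)$, and the required identity $F(\zeta\tau) = F(\tau)$ there does not follow from its validity on the open annulus $|\tau - 1| > \rho(s)$: the boundary circle is clopen, so there is no continuity or identity-principle argument available. Establishing this invariance on the boundary circle is essentially the whole content of the proposition.

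The paper's proof proceeds very differently, by a direct coefficient computation. One expands the negative part of $\phi(h)((1+X)\eps - 1)$ explicitly in terms of the Laurent coefficients of $\phi(h)$; the hypothesis that both series lie in $\calR^{[\rho(s);1[}$ becomes the statement that a certain sequence $\{x_k\}$ and its binomial transform $\{\sum_{k}(-1)^k\binom{\ell}{k}x_k\}$ both tend to~$0$. An Amice--Fresnel--type lemma then forces $x = 0$, so the negative part of $\phi(h)$ vanishes and $\phi(h)\in\calR^+$; finally $h = \tfrac{1}{p}\,\psi(\phi(h)) \in \calR^+$. This avoids any local-to-global gluing.
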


\begin{lemm}
\label{transconv}
Let $S$ be the set of sequences $\{x_k\}_{k \geq 0}$ with $x_k \in K$. Define an operator $T : S \to S$ by the formula $(Tx)_\ell = \sum_{k=0}^\ell (-1)^k \binom{\ell}{k} x_k$. If both sequences $x$ and $Tx$ converge to $0$, then $x=0$.
\end{lemm}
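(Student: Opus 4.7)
The plan is to prove $\sum_{k \geq n} x_k = 0$ for every $n \geq 0$ and then deduce $x_n = 0$ by telescoping.

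First, one checks directly (using $\binom{\ell}{k}\binom{k}{j} = \binom{\ell}{j}\binom{\ell - j}{k-j}$ and $\sum_m (-1)^m \binom{\ell-j}{m} = \delta_{\ell j}$) that $T^2 = \mathrm{id}$, so $x_k = \sum_{\ell=0}^k (-1)^\ell \binom{k}{\ell} y_\ell$ where $y = Tx$. Swapping the two finite sums in $\sum_{k=0}^N x_k$ and using the hockey-stick identity $\sum_{k=\ell}^N \binom{k}{\ell} = \binom{N+1}{\ell+1}$ then gives
\[ \sum_{k=0}^N x_k = \sum_{\ell=0}^N (-1)^\ell y_\ell \binom{N+1}{\ell+1}. \]

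Then I would specialize $N = p^m - 1$ and exploit the Legendre--Kummer formula $|\binom{p^m}{\ell+1}| = p^{v_p(\ell+1) - m}$, valid for $1 \leq \ell+1 \leq p^m$. This is always $\leq 1$, and for each fixed $\ell$ tends to $0$ as $m \to \infty$. Given $\varepsilon > 0$, choose $L$ so that $|y_\ell| < \varepsilon$ for $\ell \geq L$, and then take $m$ large enough that $L \cdot p^{-m} < \varepsilon$: for $\ell < L$ the term has norm $\leq |y_\ell| \cdot L \cdot p^{-m} < \varepsilon$ (using $v_p(\ell+1) \leq \log_p L$ so $|\binom{p^m}{\ell+1}| \leq L \cdot p^{-m}$), and for $\ell \geq L$ the term has norm $\leq \varepsilon \cdot 1 = \varepsilon$. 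So the partial sum has norm $< \varepsilon$ for $m$ large. Since $x_k \to 0$, the full series $\sum_k x_k$ converges $p$-adically, and this subsequential limit forces $\sum_k x_k = 0$.

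To iterate, I would introduce the left shift $\tau : (x_0, x_1, \ldots) \mapsto (x_1, x_2, \ldots)$ and the forward difference $\Delta$ on sequences; Pascal's identity $\binom{\ell+1}{k} = \binom{\ell}{k} + \binom{\ell}{k-1}$ yields $\Delta T = -T\tau$, hence $T(\tau^n x) = (-1)^n \Delta^n y$. This still converges to $0$ since each $(\Delta^n y)_\ell$ is a fixed linear combination of $y_{\ell+j}$, all going to $0$. Applying the previous paragraph to $\tau^n x$ gives $\sum_{k \geq n} x_k = \sum_k (\tau^n x)_k = 0$ for every $n \geq 0$, whence $x_n = 0$ for all $n$ by telescoping. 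The main obstacle is the uniformity in the Kummer estimate: one needs the weak bound $|\binom{p^m}{\ell+1}| \leq 1$ to handle the $\ell$-tail where $|y_\ell|$ is small, while the sharper bound $p^{v_p(\ell+1) - m}$ controls the $\ell$-head for each fixed cutoff $L$.
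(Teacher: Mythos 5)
Your proof is correct in substance but takes a genuinely different route from the paper's. The paper's proof is a one-liner built on Mahler's theorem: since $x_k \to 0$, the series $f(z) = \sum_k (-1)^k \binom{z}{k} x_k$ defines a continuous function on $\Zp$ with $f(\ell) = (Tx)_\ell$; if these values tend to $0$ as $\ell \to +\infty$, then $f$ vanishes identically by continuity (and density of large integers in $\Zp$), so $x = 0$ by uniqueness of Mahler coefficients. Your argument is more elementary and hands-on: the involution $T^2 = \mathrm{id}$, the hockey-stick collapse $\sum_{k=0}^N x_k = \sum_\ell (-1)^\ell y_\ell \binom{N+1}{\ell+1}$, the Kummer estimate at $N+1 = p^m$, and then the algebra of shifts and finite differences ($\Delta T = -T\tau$) to turn the single vanishing $\sum x_k = 0$ into the telescoping family $\sum_{k \geq n} x_k = 0$. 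Both proofs ultimately rest on the same arithmetic of $\binom{\cdot}{k}$ $p$-adically; yours trades Mahler's theorem for explicit estimates plus an extra bootstrapping step. One small point to tighten: after fixing $L$, your choice ``$L \cdot p^{-m} < \varepsilon$'' is not quite enough to conclude $|y_\ell| \cdot L \cdot p^{-m} < \varepsilon$ for $\ell < L$ unless $|y_\ell| \leq 1$; you should instead choose $m$ so that $\bigl(\sup_\ell |y_\ell|\bigr) \cdot L \cdot p^{-m} < \varepsilon$ (the supremum is finite since $y \to 0$). With that adjustment the argument is complete.
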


\begin{proof}
Suppose that $x$ converges to $0$, and let $f : \Zp \to K$ be given by the formula $f(z) = \sum_{k \geq 0} (-1)^k \binom{z}{k} x_k$. The function $f$ is continuous and $(Tx)_\ell = f(\ell)$. If $f(\ell) \to 0$ as $\ell \to +\infty$, then $f=0$ by continuity, and hence $x=0$.
\end{proof}

\begin{proof}[Proof of proposition \ref{foregcyc}]
Let $\eps$ be a primitive $p$-th root of $1$. Since $s(X)=(1+X)^p-1$, we have $\rho(s)=\rho=|\eps-1|$. Take $r > \rho$ and $g(X) = \sum_{n \geq 1} g_n / X^n \in \calE^{[r;1[}$. We have $g((1+X)\eps-1) = g(X \eps + \eps-1)  \in \calE^{[r;1[}$ and we  expand it as follows:
\[ \sum_{n \geq 1} \frac{g_n}{(X \eps + \eps-1)^n}  = \sum_{n \geq 1} \frac{g_n}{X^n} \eps^{-n} \left(1+\frac{\eps-1}{X \eps}\right)^{-n}  = \sum_{n \geq 1} \frac{g_n}{X^n} \eps^{-n} \sum_{j \geq 0} \binom{-n}{j} \left( \frac{\eps-1}{X \eps}\right)^j \]
By setting $m=n+j$ and using the fact that $\binom{-n}{j} = (-1)^j \binom{n+j-1}{j}$, we get $g((1+X)\eps-1) = \sum_{m \geq 1} b_m / X^m$ where $b_m = \eps^{-m} \sum_{n = 1}^{m} (-1)^{n-m} \binom{m-1}{n-1} g_n (\eps-1)^{n-m}$. This  gives us an explicit  formula for the coefficients of $g((1+X)\eps-1) \in \calE^{[r;1[}$. 

We now prove that if $g(X) \in \calR^{[\rho;1[}$ is such that $g((1+X)\eps-1) \in \calR^{[\rho;1[}$, then $g(X) \in \calR^+$. It is enough to prove that the negative part $\sum_{n \geq 1} g_n / X^n$ of $g$ is zero, so we assume that $g(X) = \sum_{n \geq 1} g_n / X^n$ as above. If we let $x_k = g_{k+1} (\eps-1)^{k+1}$ and $y_\ell = (-1)^\ell \eps^{\ell+1} b_{\ell+1} (\eps-1)^{\ell+1}$ for $k,\ell \geq 0$, then $y_\ell = \sum_{k = 0}^{\ell} (-1)^k \binom{\ell}{k} x_k$. The fact that $g(X) \in \calE^{[\rho;1[}$ is equivalent to $x_k \to 0$ as $k \to +\infty$, and likewise the fact that $g((1+X)\eps-1) \in \calE^{[\rho;1[}$ is equivalent to $y_\ell \to 0$ as $\ell \to +\infty$. The claim now results from lemma \ref{transconv}, applied to $\{x_k\}_{k \geq 0}$, since $y=Tx$ in the notation of that lemma.

We now prove the proposition. If $g(X) = \phi(h)(X) \in \calR^{[\rho;1[}$, then $\phi(h)(X) = \phi(h)((1+X)\eps-1)$ so that by the above claim $\phi(h)(X) \in \calR^+$. Therefore $h = 1/p \cdot \psi \phi(h) \in \calR^+$.
\end{proof}

\begin{rema}
\label{amfr}
The method of proof of proposition \ref{foregcyc} is reminiscent of the Amice-Fresnel theorem (see \cite{AF72}, Th\'eor\`eme 1 or \cite{AR}, \S 4.4 of chapter 6).
\end{rema}


\begin{thebibliography}{Rob00}

\bibitem[AF72]{AF72}
Yvette Amice and Jean Fresnel, \emph{Fonctions z\^{e}ta {$p$}-adiques des corps
  de nombres abeliens r\'{e}els}, Acta Arith. \textbf{20} (1972), 353--384.

\bibitem[Ber08]{LB08}
Laurent Berger, \emph{\'{E}quations diff\'{e}rentielles {$p$}-adiques et
  {$(\phi,N)$}-modules filtr\'{e}s}, Ast\'{e}risque (2008), no.~319, 13--38.

\bibitem[Col10]{CSP}
Pierre Colmez, \emph{La s\'{e}rie principale unitaire de
  {${\mathrm{GL}}_2(\mathbf{Q}_p)$}}, Ast\'{e}risque (2010), no.~330, 213--262.

\bibitem[Col14]{CLA}
Pierre Colmez, \emph{La s\'{e}rie principale unitaire de
  {${\mathrm{GL}}_2(\mathbf{Q}_p)$} : vecteurs localement analytiques},
  Automorphic forms and {G}alois representations. {V}ol. 1, London Math. Soc.
  Lecture Note Ser., vol. 414, Cambridge Univ. Press, Cambridge, 2014,
  pp.~286--358.

\bibitem[FX13]{FX}
Lionel Fourquaux and Bingyong Xie, \emph{Triangulable
  {$\mathcal{O}_F$}-analytic {$(\varphi_q,\Gamma)$}-modules of rank 2}, Algebra
  Number Theory \textbf{7} (2013), no.~10, 2545--2592.

\bibitem[Ked00]{K00}
Kiran Kedlaya, \emph{Descent theorems for overconvergent
  {F}-crystals}, Ph.D. thesis, Massachusetts Institute of Technology, 2000.

\bibitem[Ked05]{KFM}
Kiran Kedlaya, \emph{Frobenius modules and de {J}ong's theorem}, Math. Res.
  Lett. \textbf{12} (2005), no.~2-3, 303--320.

\bibitem[Laz62]{L62}
Michel Lazard, \emph{Les z\'eros des fonctions analytiques d'une variable sur
  un corps valu\'e complet}, Inst. Hautes \'Etudes Sci. Publ. Math. (1962),
  no.~14, 47--75.

\bibitem[Lub94]{L94}
Jonathan Lubin, \emph{Nonarchimedean dynamical systems}, Compositio Math.
  \textbf{94} (1994), no.~3, 321--346.

\bibitem[MZ02]{MZ}
William Messing and Thomas Zink, \emph{De {J}ong's theorem on homomorphisms of
  $p$-divisible groups},
  \url{https://www.math.uni-bielefeld.de/~zink/TdJneu.pdf}, preprint, 2002.

\bibitem[Rob00]{AR}
Alain Robert, \emph{A course in {$p$}-adic analysis}, Graduate Texts in
  Mathematics, vol. 198, Springer-Verlag, New York, 2000.

\end{thebibliography}
\end{document}